\documentclass{amsart}

\usepackage{color}
\usepackage{xcolor}

\usepackage{all2025}
\usepackage[all]{xy}
\usepackage{hyperref}
\newcommand{\Mon}{\operatorname{Mon}}
\newcommand{\lex}{\leq_{\mathrm{lex}}}
\newcommand{\triv}{\leq_{\mathrm{triv}}}
\newcommand{\lix}{<_{\mathrm{lex}}}
\newcommand{\ul}[1]{\underline{#1}}
\newcommand{\cJ}{\mathcal{J}} 

\newcommand{\cP}{\mathcal{P}}
\newcommand{\cQ}{\mathcal{Q}}
\newcommand{\cC}{\mathcal{C}}
\newcommand{\comment}[1]{}

\newcommand{\taumin}{\tau_{\mathrm{min}}}
\newcommand{\Sigmamin}{\Sigma_{\mathrm{min}}}
\newcommand{\pmin}{p_{\mathrm{min}}}
\newcommand{\Gammamin}{\Gamma_{\mathrm{min}}}
\newcommand{\pimin}{\pi_{\mathrm{min}}}
\renewcommand{\tilde}[1]{\widetilde{#1}}
\renewcommand{\hat}[1]{\widehat{#1}}
\newcommand{\Sigmam}{\Sigma_{\mathrm{mpt}}}
\newcommand{\defiff}{:\Leftrightarrow}

\begin{document}

\title{The space of preorders on a commutative monoid}

\begin{abstract}
For a finitely generated commutative monoid $\Pi$, we present a
constructive description of all (total) preorders on $\Pi$ that are
compatible with the monoid structure. Equipped with a natural topology,
these preorders form an irreducible spectral space, which we show can
be covered by a countable union of admissible sets: subsets of $\RR^N$
of the form $A \setminus H$ where $A$ is semialgebraic and $H$ is a
countable union of hyperplanes, both defined over the rational numbers. As
a consequence of this description, we show that the universal theory
of commutative monoids with a total order is decidable. Our proofs
use a divide-and-conquer technique that requires establishing all of
our results in the greater generality of sets on which $\Pi$ acts with
finitely many orbits.  As a by-product, we find a new description of
all monomoial orders on free modules over a polynomial ring.
\end{abstract}

\author{Jan Draisma}
\address{Mathematical Institute, University of Bern, Sidlerstrasse 5,
3012 Bern, Switzerland}
\email{jan.draisma@unibe.ch}

\author{George Metcalfe}
\address{Mathematical Institute, University of Bern, Sidlerstrasse 5,
3012 Bern, Switzerland}
\email{george.metcalfe@unibe.ch}

\author{Simon Santschi}
\address{Mathematical Institute, University of Bern, Alpeneggstrasse
22, 3012 Bern, Switzerland}
\email{simon.santschi@unibe.ch}

\thanks{The first author was supported by Swiss National Science
Foundation (SNSF) grant 200021-227864, and the second and third
authors were supported by SNSF grant 200021-215157.}

\maketitle

%%%%%%%%%%%%%%%%%%%%%%%%%%%%%%%%%%%%%%%%%%%

\section{Introduction and main results} \label{sec:Introduction}

%%%%%%%%%%%%%%%%%%%%%%%%%%%%%%%%%%%%%%%%%%%

Let $(\Pi,\cdot)$ be a commutative monoid. For most of this paper, we will
require that $\Pi$ is finitely generated, but not quite yet. The reader
may think of the monoid $\Mon_n=\{x^\alpha \mid \alpha \in \ZZ_{\geq 0}^n\}$ 
of monomials in a polynomial ring $K[x_1,\ldots,x_n]$ over an arbitrary field $K$; indeed, every finitely generated commutative monoid is a quotient  of
$\Mon_n$ for some $n\in \ZZ_{\geq 0}$. In particular, $\Pi$ can be an abelian group such
as $(\ZZ^n,+)$ for some $n\in \ZZ_{\geq 0}$.

A (total) {\em preorder} on $\Pi$ is a binary relation $\leq$ on $\Pi$ 
that is reflexive, transitive, and total, and satisfies $v \leq w \Rightarrow
uv \leq uw$ for all $u,v,w \in \Pi$. In this case, we write $u<v$ to denote that $u \leq
v$ but not $v \leq u$, and $u \approx v$ to denote that $u \leq v$ and $v \leq u$.
If, additionally, $\leq$ is anti-symmetric, 
then we call it an {\em order} on $\Pi$. We will never require, as is often the case in
computational algebra, that $\leq$ is a well-order. 

Let $\cP(\Pi)$ denote the set of all preorders on $\Pi$.  We equip
$\cP(\Pi)$ with the coarsest topology in which, for all $u,v \in
\Pi$, the set $\{{\leq} \mid u<v\}$ is closed. Our main goal is to derive a
constructive description of the space $\cP(\Pi)$ and its individual
elements. A first, preliminary description is as follows; we refer to
Figure~\ref{fig:PZ2} for an illustration.

\begin{thma} \label{thm:Spectrality}
Let $\Pi$ be a commutative monoid. Then $\cP(\Pi)$ is an
irreducible spectral topological space.  If $\Pi$ is finitely generated,
then $\cP(\Pi)$ is also $T_D$, i.e., every singleton is open in its
closure. Furthermore, for any $n\in \ZZ_{\geq 1}$:
\begin{enumerate}
\item $\cP(\Mon_n)$ is not Noetherian and has infinite Krull
dimension. 
\item $\cP(\ZZ^n)$ has Krull dimension $n$; it is Noetherian if
and only if $n =1$, and this is also the only case where $\cP(\ZZ^n)$
is pure-dimensional.
\end{enumerate}
\end{thma}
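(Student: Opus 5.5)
The plan is to treat the general statements first and then the explicit computations for $\Mon_n$ and $\ZZ^n$.

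\textbf{Spectrality.} Identify a preorder with its set of strict pairs $\{(u,v) \mid u<v\}$. By definition of the topology, the open sets are generated by $U_{u,v}=\{{\leq} \mid v\leq u\}$, the complements of the closed sets $\{u<v\}$. Embed $\cP(\Pi)$ into $\{0,1\}^{\Pi\times\Pi}$ via the indicator of the relation $\leq$. The axioms (reflexivity, transitivity, totality, compatibility) are each finitary conditions, so $\cP(\Pi)$ is closed in the product (Cantor) topology, hence compact Hausdorff in the patch topology. The sets $U_{u,v}$ are clopen in the patch topology. Hochster's criterion then shows the space is spectral, with the given topology generated by a family of patch-clopen sets closed under finite intersection. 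Equivalently, I would exhibit $\cP(\Pi)$ as the real-spectrum-like space of a suitable structure. To check sobriety directly: an irreducible closed set $Z$ has generic point the preorder whose strict pairs are exactly those $u<v$ holding on all of $Z$. We must check this is a total preorder; totality follows from irreducibility, since $Z\subseteq\{u<v\}\cup\{v\leq u\}$ would otherwise split $Z$.

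\textbf{Irreducibility.} The trivial preorder (everything $\approx$) lies in every nonempty open $U_{u,v}$. Hence any two nonempty opens meet, so the trivial preorder is the generic point.

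\textbf{$T_D$ for finitely generated $\Pi$.} Fix ${\leq}$. The closure of $\{\leq\}$ consists of preorders that refine it, i.e.\ keep all its strict pairs. I would show that ${\leq}$ is determined by finitely many relations $u\approx v$. The congruence $\approx$ on a finitely generated commutative monoid is finitely generated by R\'edei's theorem. Then $\{\leq\}=\overline{\{\leq\}}\cap\bigcap_{i} U_{u_i,v_i}\cap U_{v_i,u_i}$ over the generators. Equality holds because a refinement that makes all generating pairs equivalent has the same $\approx$, and two preorders with the same equivalence where one refines the other coincide; the last point needs a short argument using totality.

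\textbf{Krull dimension and Noetherianity.} For $\ZZ^n$, preorders correspond to a subgroup $\ker$ together with an order on the torsion-free quotient. The quotient must be torsion-free since an ordered group is torsion-free. Orders on $\ZZ^m$ are described by Robbiano-type flags of real vectors. Specializations $\leq\rightsquigarrow\leq'$ correspond to refining, and a chain of strict refinements strictly decreases the rank of the kernel. So chains have length at most $n$, and the chain $\ZZ^n\supset\ZZ^{n-1}\supset\cdots$ of lex orders attains $n$.

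Non-purity for $n\geq 2$ comes from a closed point of small height: an Archimedean order given by a single irrational vector with linearly independent coordinates is maximal, being already an order. Its only generization is the trivial preorder, so its height is $1<n$.

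Non-Noetherianity for $n\geq2$ comes from the uncountably many closed points. Each point $\{\leq_w\}$ for irrational $w$ is closed, and infinitely many of them give an infinite strictly descending chain of closed sets, namely $\overline{\{\text{all but the first } k\}}$. For $n=1$ there are only three points: trivial, $<$ and $>$. This space is finite, hence Noetherian.

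For $\Mon_n$, use an unbounded chain. Monoid preorders can have kernels that are not subgroups, e.g.\ ideals $x^a\approx$ stuff. The sequence of preorders collapsing $\{x_1^{k}\}\cup$ higher powers, with an order below, gives chains of arbitrary length via $x_1^{k}\approx x_1^{k+1}$ for decreasing $k$. Already for $n=1$, $\Mon_1=\ZZ_{\geq0}$ has preorders $\leq_k$ where $x^a$ is strictly increasing up to $x^k$ and constant after. Refinement gives an infinite chain $\leq_0\rightsquigarrow\leq_1\rightsquigarrow\cdots$, yielding infinite Krull dimension and a non-stabilizing chain of closed sets.

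\textbf{Main obstacle.} I expect the main obstacle to be the $T_D$ step, namely justifying that a preorder is isolated in its closure by finitely many conditions. I would first try the R\'edei finiteness argument above.
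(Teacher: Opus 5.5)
\textbf{Gap: non-Noetherianity of $\cP(\ZZ^n)$ for $n\ge 2$.} This step fails as written. Having many, even uncountably many, closed points does not prevent a spectral space from being Noetherian. For example, $\operatorname{Spec}\RR[x]$ has uncountably many closed points and is Noetherian, and there the closure of any infinite set of closed points is the whole space.

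The same thing happens inside $\cP(\ZZ^2)$. Suppose $w_i\to w$, all with $\QQ$-linearly independent coordinates. Then $\leq_w\in\overline{\{\leq_{w_i}\mid i\geq 1\}}$. Indeed, for $u\neq v$, $\leq_w\notin\{u<v\}$ means $\langle w,u-v\rangle>0$, which is an open condition on $w$. So any finite union $\{u_1<v_1\}\cup\cdots\cup\{u_m<v_m\}$ missing $\leq_w$ also misses $\leq_{w_i}$ for $i\gg 0$. Consequently, if you enumerate a countable dense-in-itself set of such directions, every set $\overline{\{\text{all but the first }k\}}$ is the same, and your chain is constant.

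What is missing is a closed set containing all later points but not the $k$-th one. The sector argument in the proof of Proposition~\ref{prop:Zn} supplies exactly this. Take $w_k=(1,y_k)$ with $y_k\in(1/(k+1),1/k)$ irrational. An order $\leq_{(1,y)}$ lies in the closed set $\{(0,k)<(1,0)\}$ if and only if $ky<1$. So this set contains $\leq_{w_j}$ for all $j\ge k$ but not $\leq_{w_{k-1}}$, and your chain becomes strictly decreasing. For $n\ge 3$, pull back along a projection $\ZZ^n\to\ZZ^2$ using Corollary~\ref{cor:Homomorphism}; this works because subspaces of Noetherian spaces are Noetherian.

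\textbf{The rest is sound, with small repairs.} Your spectrality argument is a genuinely different and shorter route than the paper's. You use Tychonoff together with Hochster's criterion: a $T_0$ topology generated by an intersection-closed family of clopen sets of a compact topology is spectral. The paper instead checks the axioms one by one. It proves quasi-compactness of basic opens by a Zorn's-lemma argument on maximal families of closed sets with the finite intersection property, and it builds generic points explicitly. Your route is close in spirit to the Stone-duality remark after Theorem~\ref{thm:Spectrality2}.

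In your auxiliary direct sobriety check, totality of the candidate generic point is automatic. What needs irreducibility is transitivity, via the closed cover $Z\subseteq\{s<q\}\cup\{t<s\}$. Your set $\{v\le u\}$ is open, so your cover does not split $Z$.

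Your $T_D$ argument via R\'edei's theorem is the same idea as the paper's appeal to Noetherianity of $K\Pi$.

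For $\Mon_n$ you only carry out $n=1$, which is Example~\ref{ex:Mon1Z1}. For $n\ge 2$, pull the chain back along a surjection $\Mon_n\to\Mon_1$. This pullback is continuous and injective, so it preserves strict specialisations; the paper does this via Proposition~\ref{prop:OpenEmbedding}.
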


For the theory of spectral spaces, we refer to~\cite{Hochster69}; 
the separation axiom $T_D$ was first introduced in~\cite{Aull62}.

\begin{figure}
\begin{center}
\includegraphics[scale=1]{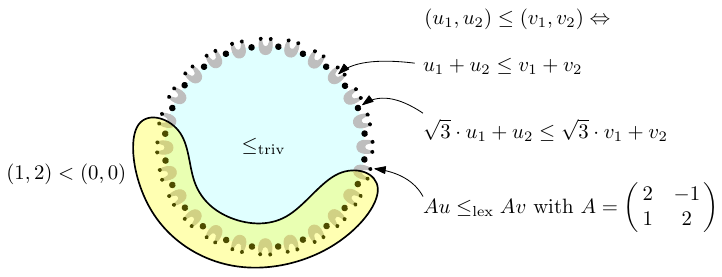}
\caption{A cartoon of $\cP(\ZZ^2)$: the light blue disk represents the
generic point, the black disks represent the closed points, and the
gray beans are points whose closure has dimension $1$. For every point $(a,b)$
on the unit circle in $\RR^2$, there is one closed point if $a,b$ are
linearly independent over $\QQ$, and one gray point with
two closed points in its closure otherwise. The yellow region is the
closed set consisting of preorders for which $(1,2) < (0,0)$
holds. In a slightly different guise, this topological space features
in \cite[Example 3.7]{Gehrke24}.}
\label{fig:PZ2}
\end{center}
\end{figure}

In this paper, an {\em admissible set} in $\RR^N$ is a set of the
form $A \setminus H$, where $A \subseteq \RR^N$ is a semi-algebraic
set defined over $\QQ$, i.e., defined by a finite boolean combination of inequalities $f(p) \geq 0$ 
given by a polynomial $f$ in $N$ variables with rational coefficients, 
and $H$ is a countable union of hyperplanes, each defined by a
linear equation with rational coefficients. The topology on $A
\setminus H$ is inherited from the topology of $\RR^N$.

\begin{thma} \label{thm:Main}
Let $\Pi$ be a finitely generated commutative monoid. Then
there exists a countable family $C_1=A_1\setminus
H_1$, $C_2=A_2 \setminus H_2,\:\ldots$
of admissible sets $C_i \subseteq \RR^{N_i}$ and continuous maps
$\phi_i:C_i \to \cP(\Pi)$ with the following properties:
\begin{enumerate}
\item Each element of $\cP(\Pi)$ is $\phi_i(p)$ for some $i\in\ZZ_{\geq 0}$ and some $p \in C_i$. 
\item For each $i\in\ZZ_{\geq 0}$ and any $u,v \in \Pi$, the set $\phi_i^{-1}(\{{\leq}
\mid u < v \})$ is $A_{iuv}' \setminus H_i$ for some semi-algebraic set $A_{iuv}' \subseteq A_i$
defined over $\QQ$.
\end{enumerate}
Furthermore, there exists a (non-terminating) algorithm that on input
$\Pi$, finitely presented as a quotient of $\Mon_n$, enumerates the sequence $C_1, C_2, \ldots$ (each after a finite
amount of time), and a (terminating) algorithm that on input $\Pi$ and $i\in\ZZ_{\geq 0}$
and $u,v \in \Pi$ outputs the semi-algebraic set $A_{iuv}' \subseteq
A_i$ in item \textup{(2)}. 
\end{thma}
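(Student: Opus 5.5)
We prove the theorem in the greater generality announced in the abstract. We replace $\Pi$ by a finitely generated $\Pi$-set $X$ with finitely many orbits, and describe all total preorders on $X$ compatible with the action ($v\le w\Rightarrow uv\le uw$). The induction is on the pair (number of generators of $\Pi$, number of orbits), ordered lexicographically, or on a Noetherian invariant such as the dimension of the monoid algebra. The base case is an orbit on which $\Pi$ acts through a finite quotient. There, compatible preorders form a finite set, so each is given by a point of $\RR^0$.

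\textbf{Step 1: the generic layer.} Write $\Pi$ as a quotient of $\Mon_n$ via $\pi$. Given a preorder $\le$ on $X$, consider the induced preorder on the group completion of the relevant monoid, i.e.\ on $\ZZ^n/L$ for the lattice $L$ generated by the relations. Choose the first "coarse" datum: a weight vector $p\in\RR^N$, where $N$ is the rank of the free part together with one real offset per orbit. We require $p$ to lie off every rational hyperplane $\langle p,\alpha\rangle=0$ ($\alpha$ rational, nonzero). This is the set $H_i$. Since the relations in $L$ force $p$ to be constant on fibres, $p$ lies in a rational linear subspace. The sign conditions needed for compatibility with the non-cancellative relations, i.e.\ the congruence defining $\Pi$, are finitely many polynomial (in fact linear/semialgebraic) inequalities over $\QQ$. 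This gives $A_i$. Such a $p$ determines the preorder: $u\le v$ iff $\langle p,\deg u\rangle+c_{[u]}\le\langle p,\deg v\rangle+c_{[v]}$. Hence $\{u<v\}$ pulls back to a set $A'_{iuv}\setminus H_i$ cut out by one more rational strict inequality. Continuity follows because the preimage of each basic closed set is relatively closed, modulo checking that the inequality is "$\geq$" on the right side.

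\textbf{Step 2: divide and conquer.} A general preorder is lexicographic. Take a weight $p$ (now allowed on hyperplanes) and the preorder it induces. The $\approx_p$-classes, i.e.\ ties, form a $\Pi'$-set, where $\Pi'\subseteq\Pi$ is the submonoid of elements of weight $0$. This submonoid is finitely generated by Gordan's lemma, and its rank is strictly smaller unless $p=0$. By Dickson/Hilbert-basis finiteness each tie class has finitely many $\Pi'$-orbits. The refinement of $\le$ on ties is a compatible preorder on this smaller $\Pi'$-set, to which induction applies. The $p=0$ case must be handled separately: either the preorder is trivial on an orbit, or we split $X$ into finitely many orbits and use a relative offset.

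\textbf{Step 3: gluing the parametrisations.} We concatenate: the parameter space is $C\times C'$, with $C$ the restriction of $p$ to a fixed rational face and $C'$ from induction. Since there are countably many rational faces and countably many inductive charts, the family is countable. The semialgebraic/hyperplane structure of products is preserved. Each $\{u<v\}$ is the union of "$p$ strictly decides" and "$p$ ties and the refinement decides", and both pieces are semialgebraic. The algorithms come from effective Gordan/Hilbert bases, Gröbner-style computation of the congruence, and enumeration of rational faces.

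\textbf{Main obstacle.} The hardest step is the finiteness in Step 2: showing that the weight-zero ties form a finitely generated $\Pi'$-set with finitely many orbits. This is delicate when the monoid is not cancellative, because collapsing relations can merge orbits or produce infinitely many ones. My first attempt would use Noetherianity of congruences on finitely generated commutative monoids (Rédei's theorem), applied to the graded pieces.
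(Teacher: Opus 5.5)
There is a genuine gap. Your Steps~1--2 transplant the group-case recursion of Proposition~\ref{prop:GroupSetPreorder} and Theorem~\ref{thm:Preotree}: split off a weight $p$ from the group completion, then recurse on the ties under the weight-zero submonoid $\Pi'$. For monoids this recursion does not exist.

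\begin{itemize}
\item In Example~\ref{ex:n2} ($1<y<y^2<\cdots<x<$ all other monomials, on $\Mon_2$), multiplying any two monomials by $x^2$ makes them equivalent. So the preorder induced on $\ZZ^2$ is trivial and $p=0$. But $S=\Pi$ is a single orbit and the preorder is nontrivial, so neither option in your ``$p=0$ case'' applies, and no induction parameter decreases.
\item In Example~\ref{ex:n2bis} one has $p=(1,1)$ and $\Pi'=\{1\}$, with infinitely many tie classes (the diagonals). Each diagonal is finite, but together the ties do not form a finitely generated $\Pi'$-set. Moreover, the refinements on different diagonals are not independent data: they are coupled by compatibility with $x$ and $y$, which have positive weight.
\item For groups this is harmless, because invertible elements translate every tie class into one of finitely many. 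In a monoid you cannot translate back.
\item Your Step~1 also fails without cancellativity. In $\Mon_2/(xy=xy^2)$ the group completion is $\ZZ$ via the $x$-exponent. The order $1<y<y^2<\cdots<x<xy<x^2<x^2y<\cdots$ is compatible and strictly refines the weight preorder, so a generic $p$ does not determine $\leq$.
\end{itemize}
R\'edei-type finiteness does not address any of this. The congruence $\approx$ is already finitely generated (by Noetherianity of $KS$); the obstruction is the shape of the recursion, not finiteness of relations.

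The missing idea is the asymptotic range of Proposition~\ref{prop:T}. This is a $\Pi$-stable $T\subseteq S$ that every orbit eventually meets and on which $\leq$ agrees with the pulled-back preorder $\leq'$. Elements are then sorted by the pair of ideals $(P_s,Q_s)$ recording which multiples of $s$ land in $S_{\leq T}$, respectively $S_{\geq T}$.
\begin{itemize}
\item By the ascending chain condition on ideals there are only finitely many pairs (Lemma~\ref{lm:TJPi}), and each piece is finitely generated over the stabiliser of its pair.
\item That stabiliser is generated by a proper subset of the generators of $\Pi$ unless the piece is $T$ itself (Lemma~\ref{lm:Action}).
\item This yields the monoid partrees of Theorem~\ref{thm:MonoidPartree}. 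Their children are not tie classes; they are permuted by a good action and compared by a partial order, and the induction is on the number of generators.
\end{itemize}

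Separately, your claim that compatibility with $\Pi$ amounts to finitely many rational polynomial inequalities is unsupported. The conditions quantify over all pairs of elements and over infinite submonoids. Making them semi-algebraic requires Propositions~\ref{prop:SemiAlgebraic1}, \ref{prop:SemiAlgebraic2} and~\ref{prop:SemiAlgebraic3}, pull-backs of preotrees, normalising offsets into $(0,|f(u_0)|)$ in rank one, and a rational-approximation argument over cones. Even then, Theorem~\ref{thm:Constructible} only gives a countable union of semi-algebraic sets. That, together with the countably many combinatorial types, is where the countable family $C_1,C_2,\ldots$ comes from.
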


It will become apparent later how the $C_i$ can be represented computationally. 
This is obvious for the $A_i$, and the countable collection
of hyperplanes to be removed from $A_i$ can also be described constructively,
for example as ``{\em all} hyperplanes in $\RR^N$ defined by a linear rational
equation''.

{\em Orders} on $\Mon_n$ are very well understood: for any such order $\le$,
there exists a matrix $A \in \RR^{k \times n}$ such that $x^{\alpha} \leq
x^{\beta} \Leftrightarrow A \alpha \lex A \beta$, where $\lex$ is the lexicographic order  on
$\RR^k$ defined by $a=(a_1,\ldots,a_k) \lex (b_1, \ldots, b_k)=b$
if and only if either $a=b$ or the smallest $i$ with $a_i \neq b_i$
satisfies $a_i<b_i$. This description, known as Robbiano's
theorem~\cite{Robbiano85}, can be extended to {\em monomial
preorders}~\cite{Kemper18}. These can also be characterised as
the preorders on $\Mon_n$ that satisfy the implication $v<w \Rightarrow
uv < uw$, and are in one-to-one correspondence with the preorders on $\ZZ^n$. 
In \S\ref{sec:Groups}, we recast this classification in a form suitable for the proof
of Theorem~\ref{thm:Main} and generalise it to monomial
orders on free modules. As shown by the following example, 
$\cP(\Mon_n)$ contains elements that have little to do with
matrices of weight vectors.

\begin{ex} \label{ex:n2}
Let $n=2$ and set $x:=x_1, y:=x_2$. Then the binary relation
\[ 1 < y < y^2 < y^3 < y^4 < \ldots < x < \text{ all
other monomials}, \]
where all other monomials form a single equivalence class for
$\approx$, is a preorder on $\Mon_2$ (see Figure~\ref{fig:exn2}) 
that cannot be described by a matrix $A$ as above. 
\begin{figure}
\begin{center}
\includegraphics[scale=.7]{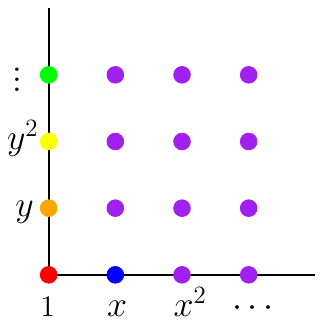}
\caption{The preorder of Example~\ref{ex:n2}; colours further down the rainbow represent larger
monomials.}\label{fig:exn2}
\end{center}
\end{figure}
\end{ex}

However, the following weak form of Robbiano's theorem does apply 
to preorders.

\begin{thma} \label{thm:Ineqs}
For any preorder on $\Mon_n$, the inequality $x^\alpha \leq x^\beta$
is equivalent to a finite boolean combination of inequalities of the
following three forms:
\[ \alpha_i \leq a; \quad \beta_i \leq a; \quad \text{and} \quad \langle \alpha, \mu
\rangle \leq \langle \beta,\mu \rangle + c, \]
where $i \in [n]:=\{1,\ldots,n\}$, $a \in \ZZ_{\geq 0}$, $\mu \in
\RR^n$, and $c \in \RR$.
\end{thma}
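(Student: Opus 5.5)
Let $\leq$ be a preorder on $\Mon_n$, and write $\le_\alpha$ for comparisons of exponents. I would argue by induction on $n$ together with a decomposition of $\ZZ_{\geq 0}^n$ into finitely many pieces. Each piece is described by conditions of the form $\alpha_i\le a$ or $\alpha_i>a$.

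The starting point is the set $I=\{(\alpha,\beta)\mid x^\alpha\approx x^\beta\}$, a congruence on $\Mon_n$. By R\'edei's theorem every congruence on a finitely generated commutative monoid is finitely generated. So $\Pi'=\Mon_n/I$ is finitely presented and carries an induced \emph{order}.

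Next I would describe the structure of $\Pi'$ in the usual way. Fix a threshold $a$ larger than all exponents in a finite generating set of $I$. Then stratify $\ZZ_{\geq0}^n$ by the subset $S\subseteq[n]$ of coordinates with $\alpha_i> a$, together with the exact values of the coordinates $\alpha_i\le a$ for $i\notin S$. On each stratum, the congruence restricted to it should be described by a lattice $L_S\subseteq\ZZ^S$ together with finitely many translation data. In other words, $x^\alpha\approx x^\beta$ for $\alpha,\beta$ in the same stratum iff $\alpha-\beta\in L_S$. This gives a finite boolean combination of linear equalities. Each equality $\langle\alpha-\beta,\mu\rangle=0$ can be written as two inequalities of the third form with $c=0$.

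The order itself is then handled stratum by stratum. Fix a pair of strata $(\sigma,\tau)$ and restrict to $\alpha\in\sigma$, $\beta\in\tau$. The stratum $\sigma$ is a translate of a cone $\ZZ_{\ge0}^S$ shifted by $a+1$ in the $S$-coordinates. The monoid $\Mon_S$ acts on it, and the preorder is compatible with this action. Set $T=S_\sigma\cap S_\tau$. The key claim is that, after cancelling using the group $\ZZ^T/L_T$, which is ordered compatibly, comparison between the two strata is governed by an ordered abelian group. By the preorder version of Robbiano's theorem~\cite{Kemper18}, such an order is given by a matrix $A$ and the lexicographic order. A lexicographic comparison $A\alpha\lex A\beta+c$ is a finite boolean combination of inequalities $\langle\alpha,\mu_j\rangle\le\langle\beta,\mu_j\rangle+c_j$, together with the reverse inequalities. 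The reverse inequalities are also of the third form, with $\mu$ replaced by $-\mu$ and $c$ negated.

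The main obstacle is comparing elements of different strata, and more generally pairs where the $S$-coordinates are not aligned. Example~\ref{ex:n2} shows these comparisons can be unbounded-threshold behaviour of a subtle kind. There, $y^k<x$ for all $k$, so the comparison is constant on the strata, but in general it may depend on linear forms in the coordinates.

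My plan for this step is the divide-and-conquer alluded to in the abstract. Rather than a monoid, consider the $\Mon_n$-set given by the finitely many orbits (strata). I would prove the stronger statement for preorders on finitely generated $\Mon_n$-sets with finitely many orbits, by Noetherian induction on the congruence or on $n$.

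The induction step would run as follows. Take a minimal element's orbit, or more precisely use that the set $\{\alpha : x^\alpha \geq x^{\beta_0}\}$ is a monoid ideal, hence finitely generated by Dickson's lemma. This splits the set into an upward-closed part and its complement. The complement is again a finite union of sets bounded in some coordinate, which by induction is a smaller case. Within the upward-closed part the action becomes strictly compatible, so the group and Robbiano case applies.

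Verifying that this recursion terminates, and that the constants $c$ stay finite, is the step I expect to require the most care.
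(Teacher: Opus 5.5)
\textbf{There is a genuine gap.} Your treatment of a single stratum is reasonable. Once $\approx$ is cancellative on a stratum, the quotient order is strictly compatible, so it comes from the group and Robbiano applies. (You have not shown that a threshold merely exceeding the exponents of generators of $\approx$ makes strata cancellative; Noetherianity gives \emph{some} threshold, but this needs an argument.)

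The gap is the step you flag yourself: comparing $x^\alpha$ and $x^\beta$ from different strata. That is the whole content of the theorem, and your ``key claim'' that such comparisons are governed by an ordered abelian group has no argument behind it. Cancellation cannot supply one:
\begin{itemize}
\item if $S_\sigma\cap S_\tau=\emptyset$, there is no group to cancel by at all;
\item even when $T\neq\emptyset$, the coordinates in $S_\sigma\setminus T$ and $S_\tau\setminus T$ are not acted upon, and multiplying by variables outside $T$ moves elements into other strata.
\end{itemize}

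The induction you sketch to get around this rests on two false statements.
\begin{itemize}
\item $\{\alpha : x^\alpha\ge x^{\beta_0}\}$ is in general not a monoid ideal. For the preorder $x^\alpha\le x^\beta \Leftrightarrow \alpha_1-\alpha_2\le\beta_1-\beta_2$ on $\Mon_2$, the set $\{x^\alpha \ge 1\}$ contains $1$ but not $y$. It is an ideal only when every variable is $\ge 1$.
\item Even when it is an ideal, the action on it need not be strictly compatible. In Example~\ref{ex:n2} with $x^{\beta_0}=x$, the up-set is all multiples of $x$, and $x<x^2$ while $x\cdot x\approx x^2\cdot x$.
\end{itemize}
Moreover, even granting a split into an ideal and finitely many slabs, you would still have to compare an element of the ideal with an element of a slab, which is the original problem again.

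\textbf{The missing idea} is a global coarse comparison together with a rule showing that ties across pieces depend only on the pieces. The paper supplies both.
\begin{itemize}
\item It uses the preorder $s\le' t:\Leftrightarrow \exists u\in\Pi: us\le ut$. This is pulled back from the group completion $\Gr(S)$, so it compares \emph{all} pairs by linear data (Corollary~\ref{cor:Partree}).
\item Since $\le$ refines $\le'$, only ties $s\approx' t$ remain to be resolved.
\item For these it chooses a maximal asymptotic range $T$ (Proposition~\ref{prop:T}) and the pairs of ideals $(P_s,Q_s)$. This partitions $S$ into finitely many pieces, each finitely generated over a stabiliser monoid generated by a proper subset of the variables (Lemmas~\ref{lm:Action} and~\ref{lm:TJPi}). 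Membership in a piece is a boolean combination of conditions $\alpha_i\le a$.
\item Crucially, if $s\approx' t$ lie in different pieces, then whether $s\le t$ holds is decided by the pieces alone, via the inclusion order on the pairs $(P,Q)$.
\end{itemize}
This yields the monoid partree of Theorem~\ref{thm:MonoidPartree}, and Corollary~\ref{cor:Ineqs}, hence the statement, follows by recursion. Your stratification has no analogue of $\le'$ or of this tie-breaking rule, so as it stands the proposal does not prove the theorem.
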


The following theorem, whose proof uses the computational description of the space of preorders on a commutative monoid provided by Theorem~\ref{thm:Main}, settles a fundamental problem in the study of ordered algebraic structures (see, e.g.,~\cite{Weh94,EKMMW01,Mor02,GJKO07,Vet16,CGMS22,MPT23}).

\begin{thma} \label{thm:Algorithm}
The universal theory of ordered commutative monoids is decidable.
\end{thma}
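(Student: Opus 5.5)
We reduce decidability of the universal theory to deciding whether a finite conjunction of atomic formulas and negated atomic formulas is satisfiable in some ordered commutative monoid. The language is $\cdot,1,\leq$; terms are monomials in variables $x_1,\ldots,x_n$, so atoms have the form $s=t$ or $s\leq t$ with $s,t\in\Mon_n$. A universal sentence is valid if and only if its negation, an existential sentence, is unsatisfiable. Putting the matrix of that existential sentence into disjunctive normal form, it suffices to decide the satisfiability of finitely many conjunctions of the shape
\[ \textstyle\bigwedge_j s_j=t_j \wedge \bigwedge_k s_k'\leq t_k' \wedge \bigwedge_l u_l\neq v_l \wedge \bigwedge_m v_m' < u_m' , \]
where totality has been used to rewrite $\neg(u\leq v)$ as $v<u$.

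For such a conjunction, a satisfying assignment in an ordered commutative monoid $M$ gives a homomorphism $\Mon_n\to M$. Its image is finitely generated and inherits the order, and it is a quotient of $\Mon_n$ in which the equations $s_j=t_j$ hold. Pulling the order back yields a preorder on $\Mon_n$.

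We then encode the equalities in a preorder on a finitely presented monoid. Set $\Pi:=\Mon_n/\langle s_j=t_j\rangle$, which is finitely presented. Satisfiability is equivalent to the existence of a preorder $\leq$ on $\Pi$ such that the following two conditions hold.
\begin{enumerate}
\item $s_k'\leq t_k'$ and $v_m'<u_m'$ hold for all $k$ and $m$.
\item For each $l$, $u_l$ and $v_l$ are distinct in the quotient $\Pi/{\approx}$, with $\approx$ read as a congruence.
\end{enumerate}
For the converse direction we take $M=\Pi/{\approx}$, which is an ordered commutative monoid because preorders are compatible with multiplication. Condition (2) is just $u_l<v_l$ or $v_l<u_l$. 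So satisfiability becomes a finite boolean combination of conditions ``$u<v$'' over $\Pi$, where each nonstrict inequality $s\leq t$ is the negation of $t<s$. The question is therefore whether a given boolean combination $B$ of closed sets $\{\leq \mid u<v\}$ in $\cP(\Pi)$ is nonempty.

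Now invoke Theorem~\ref{thm:Main}. Its item~(1) shows that $B$ is nonempty if and only if, for some $i$, $\phi_i^{-1}(B)$ is nonempty. By item~(2), $\phi_i^{-1}(B)=A'_{iB}\setminus H_i$, where $A'_{iB}$ is the corresponding boolean combination of the sets $A'_{iuv}$ inside $A_i$, and this set is computable. We get a semi-decision procedure for satisfiability by enumerating $i$. For each $i$ we decide whether $A'_{iB}\setminus H_i\neq\emptyset$. The removed hyperplanes are countably many rational hyperplanes, and a semialgebraic set is not covered by countably many hyperplanes exactly when it is not contained in a finite union of them. By the Baire category theorem applied to its top-dimensional cells, this happens exactly when its Zariski closure (over $\RR$, computable over $\QQ$) has an irreducible component that is not a rational hyperplane in $H_i$. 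Deciding this uses quantifier elimination (Tarski) together with the explicit description of $H_i$. If $H_i$ is ``all rational hyperplanes'', the condition is that some component of maximal dimension among the top-dimensional parts is not contained in a rational hyperplane. When $A'_{iB}$ has nonempty interior this is immediate, and otherwise one checks the rational linear span of the set.

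Semi-decidability of unsatisfiability, i.e.\ of validity of universal sentences, comes from a different source: the theory of ordered commutative monoids is finitely axiomatized, so its valid sentences are recursively enumerable by completeness. Running both procedures in parallel gives decidability. The main obstacle is the emptiness test for $A\setminus H$ with countable $H$. This step needs care, since semialgebraic sets of lower dimension may lie entirely inside some rational hyperplane. I would handle it by decomposing $A'_{iB}$ into cells and, for each cell, computing the smallest rational affine subspace containing it. The cell survives exactly when that subspace is not contained in any removed hyperplane, which for the family of all rational hyperplanes means the subspace is the whole ambient space or at least is not itself contained in one of the removed hyperplanes.
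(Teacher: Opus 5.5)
Your architecture is the paper's. You negate, reduce to finding a preorder that satisfies a boolean combination of the basic closed sets $\{u<v\}$, and semi-decide satisfiability by enumerating the pieces $C_i=A_i\setminus H_i$ of Theorem~\ref{thm:Main} and testing whether $A'_{iB}\setminus H_i\neq\emptyset$. You run this in parallel with a semi-decision procedure for the opposite answer.

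There are two genuine but harmless differences.
\begin{itemize}
\item You quotient by the equations and apply Theorem~\ref{thm:Main} to $\Pi=\Mon_n/\langle s_j=t_j\rangle$. The paper stays on $\Mon_n$ (indeed on $\Mon_n\times[m]$), where an equation $a=b$ is just $a\approx b$, i.e.\ $\neg(a<b)\wedge\neg(b<a)$, so it needs neither a quotient nor a DNF. Your version is fine because Theorem~\ref{thm:Main} accepts any finitely presented quotient of $\Mon_n$.
\item For validity you use finite axiomatisability plus G\"odel's completeness theorem. The paper instead enumerates finite partial preorders on the truncations $\Mon_{n,\leq N}\times[m]$ and uses a compactness (K\"onig) argument. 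Your route is shorter; the paper's is self-contained and yields an explicit finite refutation.
\end{itemize}

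The part that is wrong as written is the emptiness test. Three of the criteria you state before your last paragraph are false.
\begin{itemize}
\item ``Some irreducible component of the Zariski closure is not a rational hyperplane'' fails for $A=\{(0,0)\}\subseteq\RR^2$. Its only component is a point, yet it lies on the rational line $x=0$.
\item Looking only at top-dimensional cells or components fails for $A=\{(x,0)\mid 0<x<1\}\cup\{(\sqrt{2},\sqrt{3})\}$. The segment lies in $y=0$, but the isolated point lies on no rational line, since $1,\sqrt{2},\sqrt{3}$ are linearly independent over $\QQ$. So $A\setminus H\neq\emptyset$ although your criterion says otherwise.
\item Checking the rational span of the whole set fails for three non-collinear rational points. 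Their smallest rational affine hull is $\RR^2$, although each point lies on a rational line.
\end{itemize}

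Only your final cell-by-cell criterion is correct, and it is exactly the paper's test: compute a CAD adapted to $B_i$ and check whether each cell lies in some hyperplane of $H_i$. Justifying it needs more than Baire. If a cell $C$ is covered by countably many hyperplanes $L_k$, Baire gives some $C\cap L_k$ with nonempty interior in $C$. You then need that $C$ is a connected analytic (Nash) manifold, so that the affine function defining $L_k$ vanishes on all of $C$. Equivalently, the correct Zariski formulation is that some irreducible component of the Zariski closure is contained in no hyperplane of $H_i$. Once the incorrect intermediate criteria are deleted and this step is added, your argument is complete at the level of detail of the paper's proof.
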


This theorem says that there exists an algorithm that can
decide whether an input formula of the form $\forall u_1,\ldots,u_n:
\psi(u_1,\ldots,u_n)$, where $\psi$ is a finite boolean combination of
inequalities $u_1^{a_1} \cdots u_n^{a_n} \leq u_1^{b_1} \cdots u_n^{b_n}$
and equations $u_1^{a_1} \cdots u_n^{a_n} = u_1^{b_1} \cdots u_n^{b_n}$
(both with $a_1,\ldots,a_n,b_1,\ldots,b_n \in \ZZ_{\geq 0}$) holds in
every commutative monoid equipped with an order. 

Theorem~\ref{thm:Algorithm} can be formulated as a result about purely
algebraic structures. A {\em lattice-ordered monoid} ({\em
$\ell$-monoid}) is a structure $(L,\land,\lor,\cdot,1)$ such that
$(L,\land,\lor)$ is a lattice with a lattice-order defined by $a\leq
b\defiff a\land b = a$ for $a,b\in L$; $( L,\cdot,1)$ is a monoid; and
$\cdot$ distributes over $\land$ and $\lor$, i.e., $a(b\land
c)d=abd\land acd$ and $a(b\lor c)d=abd\lor acd$ for all $a,b,c,d\in
L$. It is commutative if its monoid reduct is commutative, totally
ordered if $\le$ is total, and distributive if $\forall a,b,c \in L: a\land (b\lor c)=(a\land b)\lor (a\land c)$. It follow easily, using the strong distribution properties of the  product and lattice operations, that Theorem~\ref{thm:Algorithm} is equivalent to the statement that the universal theory of totally ordered commutative $\ell$-monoids is decidable. 

This theorem also implies the decidability of the universal theory of commutative distributive $\ell$-monoids, which form an equational class and have been studied in some depth in the universal algebra literature~(see, e.g.,~\cite{Mer71,Rep83,AE84,GH13,CGMS22}). We just require two further facts. First, an implication between a finite conjunction of equations and finite disjunction of equations holds in an equational class if and only if one of the quasiequations obtained by selecting a single equation on the right of the implication holds in the class. 
Second, a quasiequation holds in every commutative distributive $\ell$-monoid if and only if it holds in every totally ordered commutative $\ell$-monoid~\cite{Mer71}.

\begin{cora} \label{cor:OMdecidable}
The universal theory of commutative distributive $\ell$-monoids is decidable.
\end{cora}

%%%%%%%%%%%%%%%%%%%%%%%%%%%%%%%%%%%%%%%%%%%

\subsection*{Organisation of the paper and related literature}

In \S\ref{sec:Preliminaries} we introduce the key notions discussed
above in greater generality. Crucially, to prove Theorem~\ref{thm:Main}
and its consequences Theorems~\ref{thm:Ineqs} and~\ref{thm:Algorithm}
by induction, we need to prove a generalisation of this result for finitely generated
$\Pi$-sets.  In commutative algebra terms, we will prove all the theorems
above not just for preorders on the monomials in a polynomial ring but
also for preorders on the monomials in a finitely generated free module
over that polynomial ring.

In \S\ref{sec:Spectrality} we prove the first statement of 
Theorem~\ref{thm:Spectrality} and its generalisation to finitely generated
$\Pi$-sets (Theorem~\ref{thm:Spectrality2}). 

In \S\ref{sec:Groups} we study preorders on finitely generated $\Pi$-sets, 
where $\Pi$ is a finitely generated abelian group; see
Theorem~\ref{thm:Preotree}. Any such preorder gives rise to a tree
decorated with combinatorial data, which we call a {\em preotree}, along
with {\em numerical data} that plays the role of the matrix $A$ in the
description of orders on $\Mon_n$ mentioned above and is given by a vector of real numbers. 
(Moreover, if the preotree is chosen to be {\em minimal}, then it is essentially canonical.)  This generalises 
Robbiano's theorem to finitely generated $\Pi$-sets; the original result 
is recovered when the $\Pi$-set is $\Pi$ itself equipped 
with left multiplication.  Our results in this section are close to those
of~\cite{Rust97}, which gives a recursive construction of all monomial
orders on a free module over a polynomial ring. However, for our subsequent 
application of these results to monoids, we need finer control over the numerical data than
that provided by~\cite{Rust97}.  Around the same time as~\cite{Rust97}, 
similar classifications were obtained in the Ph.D.~thesis~\cite{Horn98} 
and in the study of {\em Riquier orderings}~\cite{Caboara99}.

In \S\ref{sec:admissible} we establish several admissibility results for finitely generated $\Pi$-sets, 
 again under the assumption that $\Pi$ is a finitely generated abelian group. For instance, fix two preotrees $\tau$ and
$\tau'$ for the same $\Pi$-set $S$, and let $\Sigma,\Sigma'$ denote their
spaces of numerical data. For $p \in \Sigma$, let $\leq^p$ denote the
preorder on $S$ defined by $(\tau,p)$, and similarly for $p' \in \Sigma'$
and $\tau'$. Then the locus of all $(p,p') \in \Sigma \times \Sigma'$
satisfying $\forall s,t \in S: s \leq^p t \Rightarrow s \leq^{p'} t$
is admissible (Proposition~\ref{prop:SemiAlgebraic1}). This result,
and variants of it, are of crucial importance in the next section.
Indeed, in \S\ref{sec:Monoids} we turn our attention to the considerably more
difficult case where $\Pi$ is any finitely generated commutative
monoid and $S$ is a finitely generated $\Pi$-set. The main result here
is Theorem~\ref{thm:MonoidPartree}, which gives a description of an
arbitrary preorder on $S$, now using trees whose vertices are labelled
by preorders on sets on which a {\em group} acts; by
Theorem~\ref{thm:Preotree} these in turn 
can be described by preotrees with numerical data. We show that the
locus of the combined numerical data for all these preotrees  that yields 
a preorder on $S$ is a countable union of admissible sets
(Theorem~\ref{thm:Constructible}). 

Section~\ref{sec:Closed} is an interlude, in which we use
Theorem~\ref{thm:MonoidPartree} to show that closed points in
$\cP(\Mon_n)$ are orders for $n \leq 2$. Using an example due to Gavin
St.~John, we show that this is no longer true for $n \geq 3$.

Finally, in \S\ref{sec:Mains}
we establish Theorems~\ref{thm:Spectrality}--\ref{thm:Algorithm}, and
indeed their versions for finitely generated $\Pi$-sets, as expressed in 
Theorem~\ref{thm:SAnalogues}, concluding also Corollary~\ref{cor:OMdecidable}.

%%%%%%%%%%%%%%%%%%%%%%%%%%%%%%%%%%%%%%%%%%%

\section{Preliminaries} \label{sec:Preliminaries}

%%%%%%%%%%%%%%%%%%%%%%%%%%%%%%%%%%%%%%%%%%%

\subsection{Partrees}

We present first a general construction of preorders on sets that
will be used later as the basis for characterizations of
preorders on both groups (Corollary~\ref{cor:Partree}) and monoids
(Theorem~\ref{thm:MonoidPartree}), as well as on sets with a group or
monoid action, respectively. 

\begin{de} \label{de:Partree}
A {\em partree} on a set $S$ is the data of a finite, rooted tree
$\tau$ in which each vertex $x$ is labelled by a subset $S_x$ of $S$,
a {\em partial order} $\preceq_x$ on the children of $x$, and a 
preorder $\leq_x$ on $S_x$, subject to the following conditions:
\begin{enumerate}
\item for the root $r$ of $\tau$, we have $S_r=S$, 
\item if $y_1,\ldots,y_k$ are the children of $x$, then $S_x$ is the
disjoint union of $S_{y_1},\ldots,S_{y_k}$, and 
\item if $s,t \in S_x$ satisfy $s \approx_x t$, then either $x$ is
a leaf, or any children $y,z$ of $x$ with $s \in S_y$ and $t
\in S_z$ satisfy $y \preceq_x z$ or $z \preceq_x y$. 
\end{enumerate}
We define a relation $\leq$ on $S$ as follows: let $s,t \in S$ and let
$x_0=r,x_1,\ldots,x_k$ be the maximal path from the root satisfying 
$s,t \in S_{x_k}$; then $s \leq t$ if and only if 
\begin{enumerate}
\item[(i)] either $s \approx_{x_i} t$ for all $i=0,\ldots,k$, and then
either $x_k$ is a leaf or $s \in S_y$ and $t \in S_z$ for
children $y,z$ of $x_k$ such that $y \prec_{x_k} z$; 
\item[(ii)]  or the minimal $i$ with $s \not \approx_{x_i} t$ satisfies $s
<_{x_i} t$. \qedhere
\end{enumerate}
\end{de}

The prefix ``par'' refers both to the partitions in item (3) and to
the partial orders $\preceq_x$. Note that if $\tau$ is a path,
then $\leq$ is just a (kind of) lexicographic preorder.

\begin{lm} \label{lm:Partree}
The relation $\leq$ determined by a partree $\tau$ on $S$ is a 
preorder. 
\end{lm}

\begin{proof} 
For any $s \in S$, the maximal path for the pair $s,s$ leads to the
unique leaf $x$ of $\tau$ with $s \in S_x$, and hence $s \leq s$, by (i). 
That is, $\leq$ is reflexive.

For transitivity, consider $s,t,q \in S$ with $s \leq t$ and $t
\leq q$, and let $r=x_0,\ldots,x_k$ be the maximal path for $s,t$
and $r=x_0,\ldots,x_m,x'_{m+1},\ldots,x'_{k'}$ the maximal path
for $t,q$; here $m \leq k$ is chosen maximal, so that the two paths
deviate after $x_m$. 
If $k$ and $k'$ are both greater than $m$, then $t$ lies in both
$S_{x_{m+1}}$ and $S_{x'_{m+1}}$, a contradiction to the fact that these
sets are disjoint. Hence at least one of $k,k'$ equals $m$. We assume
$k'=m$; the case where $k=m$ can be dealt with in a similar manner.

If $s \not \approx_{x_i} t$ or $t \not \approx_{x_i} q$ for some $i \in
\{0,\ldots,m\}$, then for the minimal such $i$ we have $s \leq_{x_i} t$
and $t \leq_{x_i} q$ with at least one of these strict, and hence $s
<_{x_i} q$, so that $s \leq q$. Otherwise, there are two cases. 
If $x_m$ is a leaf, then $k=m$ and hence $s \leq q$. 
Suppose next that $x_m$ is not a leaf. Then $x_m$ has children $x,y,z$ with $s \in S_x$, 
$t \in S_y$, and $q \in S_z$. Moreover, $y \prec_{x_m} z$ because $x_0,\ldots,x_m$ is
the maximal path for $t,q$, so $y \neq z$. If $x=y$, then
$x_0,\ldots,x_m$ is also the maximal path for $s,q$, and $x = y  \prec_{x_m}
z$, so $s \leq q$. If $x \neq y$, then $x_0,\ldots,x_m$ is also
the maximal path for $s,t$, so $x \prec_{x_m} y \prec_{x_m} z$; in
particular, $x \neq z$, so $x_0,\ldots,x_m$ is the maximal path
for $s,q$, and $x \prec_{x_m} z$ implies again that $s \leq q$. Hence $\leq$ is transitive. 

Finally, to see that $\leq$ is total, let $s,t \in S$ and let
$x_0,\ldots,x_m$ be the maximal path for $s,t$. If $s \not \approx_{x_i}
t$ for some $i \in\{0,\ldots,m\}$, then $s \leq t$ or $t \leq s$. Otherwise, there 
are two cases. If $m$ is a leaf, then $s \leq t$ (and $t \leq s$). Suppose 
next that $m$ is not a leaf. Then $s \in S_y$
and $t \in S_z$ for distinct children $y,z$ of $x_m$. Since $s\approx_{x_m}t$, 
 by condition (3) in the definition of a partree, $y \prec_{x_m} z$ or $z \prec_{x_m} y$. Hence $s \leq t$ or $t\leq s$.
\end{proof}

The following proposition is immediate, and shows how to pull back
partrees along maps; we will later see variants of this for 
trees decorated with more complicated combinatorial and numerical
data; see Proposition~\ref{prop:PullBack} and
Definition~\ref{de:PullBackMPT}.

\begin{prop} \label{prop:PullBackPartree}
Let $S,S'$ be sets, $\phi:S \to S'$ a map, and $\tau'$ a partree on
$S'$. Then the same underlying tree with the subset $S_x:=\phi^{-1}(S'_x)$
associated to the vertex $x$, the total preorder $\leq_x$ on $S_x$
obtained by pulling back $\leq'_x$ to $S_x$ along $\phi$, and the same partial order
$\preceq_x$ on the children of $x$, is a partree $\tau$ on
$S$. The preorders ${\leq}, {\leq'}$ on $S,S'$ 
defined by $\tau,\tau'$, respectively, satisfy $s \leq t
\Leftrightarrow \phi(s) \leq' \phi(t)$. \hfill $\square$
\end{prop}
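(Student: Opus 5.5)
The plan is to check the three partree conditions for the pulled-back data directly, and then to show that the comparison of $s,t$ in $\tau$ unfolds step by step exactly like the comparison of $\phi(s),\phi(t)$ in $\tau'$.

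\emph{Partree conditions.} Condition (1) holds because $S_r=\phi^{-1}(S'_r)=\phi^{-1}(S')=S$. For condition (2), note that preimages preserve disjoint unions. So if $S'_x=\bigsqcup_j S'_{y_j}$, then $S_x=\bigsqcup_j \phi^{-1}(S'_{y_j})$.

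The pulled-back relation $\leq_x$ is defined by $s\leq_x t :\Leftrightarrow \phi(s)\leq'_x\phi(t)$. It is reflexive, transitive and total on $S_x$ because $\leq'_x$ has these properties on $S'_x$. Moreover $\phi$ maps $S_x$ into $S'_x$, so the definition makes sense.

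For condition (3), suppose $s\approx_x t$ with $s\in S_y$ and $t\in S_z$ for children $y,z$ of $x$. Then $\phi(s)\approx'_x\phi(t)$, $\phi(s)\in S'_y$ and $\phi(t)\in S'_z$. Condition (3) for $\tau'$ now gives that $y,z$ are comparable under $\preceq_x$, which is the same partial order in both trees.

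\emph{Maximal paths agree.} Let $x_0,\ldots,x_k$ be the maximal path for $s,t$ in $\tau$. For every vertex $x$ we have $s,t\in S_x$ if and only if $\phi(s),\phi(t)\in S'_x$. Since the underlying tree is the same, this path is also the maximal path for $\phi(s),\phi(t)$ in $\tau'$.

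\emph{The relations agree.} Along this path, for each $i$ we have $s\approx_{x_i}t$ if and only if $\phi(s)\approx'_{x_i}\phi(t)$, and $s<_{x_i}t$ if and only if $\phi(s)<'_{x_i}\phi(t)$. So clause (ii) of Definition~\ref{de:Partree} holds for $s,t$ if and only if it holds for $\phi(s),\phi(t)$.

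Likewise, the vertex $x_k$ is a leaf in both trees or in neither. If it is not a leaf, then the children of $x_k$ containing $s$ and $t$ are the children containing $\phi(s)$ and $\phi(t)$, and they are compared by the same $\prec_{x_k}$. So clause (i) also transfers in both directions, which gives $s\leq t\Leftrightarrow\phi(s)\leq'\phi(t)$.

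There is no real obstacle here. The only point needing care is the degenerate case where some $S_x$ is empty. It still respects the disjoint-union condition, and it never lies on the maximal path of any pair, so it does not affect the argument.
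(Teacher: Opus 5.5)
Your proposal is correct and is exactly the routine verification the paper has in mind when it calls this proposition immediate and omits the proof. You check the three partree axioms through preimages, then observe that the maximal paths, the relations $\approx_{x_i}$ and $<_{x_i}$, and the orders $\preceq_x$ all transfer along $\phi$, which gives $s\leq t \Leftrightarrow \phi(s)\leq'\phi(t)$.
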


We call the partree $\tau$ the {\em pull-back} of $\tau'$. 

%%%%%%%%%%%%%%%%%%%%%%%%%%%%%%%%%%%%%%%%%%%

\subsection{Preordered $\Pi$-sets}

Let $\Pi$ be a commutative monoid. We call $\Pi$ finitely generated if
 $\Pi=\{x_1^{a_1} \cdots x_n^{a_n} \mid a_1,\ldots,a_n \in \ZZ_{\geq
0}\}$ for some $x_1,\ldots,x_n \in \Pi$.  Let $S$ be a $\Pi$-set,
i.e., a set equipped with a map $\Pi \times S \to S, (u,s) \mapsto us$
satisfying $1s=s$ and $(uv)s=u(vs)$ for all $u,v \in \Pi$ and $s \in S$.
We call the $\Pi$-set $S$ finitely generated if $S=\Pi s_1 \cup \cdots
\cup \Pi s_k$ for some $k \in \ZZ_{\geq 0}$ and $s_1,\ldots,s_k
\in S$.  If $\Pi$ is a group, the $s_i$ can be chosen so that this union is disjoint; 
otherwise, this may not be possible.

\begin{de}
A {\em preorder} on a $\Pi$-set $S$ is a reflexive, transitive, and total relation $\leq$ such that
for all $r,s \in S$ and $u \in \Pi$,
\[ 
r \leq s \Rightarrow ur \leq us. 
\]
It is called an {\em order} if it is also anti-symmetric. 
The {\em trivial} preorder $\triv$ on $S$ is defined by setting $r \triv s$ for all $r,s\in S$.

We write $\cP_\Pi(S)$ for the set of all preorders on $S$. We
give $\cP_\Pi(S)$ the coarsest topology in which for all $s,t \in S$, the
set $\{{\leq} \mid s<t\}$ is closed; we will often denote such a set by
$\{s<t\}$.
\end{de}

We will always regard $\Pi$ as a $\Pi$-set under left multiplication; in this case, $\cP_\Pi(\Pi)$ is the topological space $\cP(\Pi)$
considered in \S\ref{sec:Introduction}.

\begin{thma} \label{thm:SAnalogues}
Let $\Pi$ be any finitely generated commutative monoid. The analogues of Theorems~\ref{thm:Spectrality}--\ref{thm:Algorithm}
hold for every finitely generated $\Pi$-set $S$.
\end{thma}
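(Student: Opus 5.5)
This theorem is a roadmap; its proof is the rest of the paper. The plan is to prove each of Theorems~\ref{thm:Spectrality}--\ref{thm:Algorithm} directly for a finitely generated $\Pi$-set $S$. The statements for $\Pi$ itself then follow by taking $S=\Pi$ with left multiplication. The central device is an induction on the structure of $(\Pi,S)$. Given a preorder $\leq$ on $S$, the set of $u\in\Pi$ with $us\approx s$ for suitable $s$ determines a face (or prime ideal) of $\Pi$. Localising at it produces a group $\Pi'$ acting on a quotient $\Pi'$-set, and removing it leaves smaller $\Pi$-sets: the complements of orbits, or quotients by ideals, with finitely many orbits. This is why the $\Pi$-set generality is forced: even starting from $S=\Pi$, the pieces that appear are only $\Pi$-sets.

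The first steps are general.
\begin{enumerate}
\item Spectrality of $\cP_\Pi(S)$. Embed $\cP_\Pi(S)$ into $\{0,1\}^{S\times S}$ via the indicator of $<$. With the patch (constructible) topology this is a product of finite discrete spaces, and the preorder axioms are closed conditions there. So the image is compact in the patch topology. The subbasic closed sets $\{s<t\}$ are then clopen in the patch topology, and Hochster's criterion gives spectrality.
\item Irreducibility. The trivial preorder $\triv$ lies in no set $\{s<t\}$, so its closure is the whole space; it is the generic point.
\item The group case. For $\Pi$ a finitely generated abelian group, classify preorders on $S$ by preotrees: partrees whose vertex preorders come from real weight vectors, as in Robbiano's theorem. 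Show that the parameter space of each preotree is admissible, and that the loci $\{s<t\}$ inside it are semialgebraic minus the same hyperplanes. For finitely generated $S$ there are countably many preotrees.
\item The monoid case. Glue these group-case descriptions along a finite tree whose vertices carry group-action preorders. This gives the countable family $C_i$ and maps $\phi_i$ of Theorem~\ref{thm:Main}; compatibility of the glued data is again an admissible condition.
\end{enumerate}

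Theorem~\ref{thm:Ineqs} for $S$ is then read off from this parametrisation. Each leaf contributes linear inequalities $\langle\alpha,\mu\rangle\le\langle\beta,\mu\rangle+c$. The cut conditions for faces and ideals contribute the coordinate bounds $\alpha_i\le a$ and $\beta_i\le a$. Finiteness of the boolean combination comes from finiteness of the tree.

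For the decidability statement, fix a universal sentence. Its failure in some ordered commutative monoid yields a counterexample in a finitely presented $\Pi$ with an order, and conversely. Two procedures run in parallel:
\begin{itemize}
\item enumerate finite presentations and the sets $C_i$, and test whether some $C_i$ contains a point realising a counterexample;
\item search for a proof that the sentence is valid.
\end{itemize}
The first test is decidable: existence of a point in $A'\setminus H$, with $H$ a countable union of rational hyperplanes, reduces to asking whether $A'$ is not contained in finitely many of them (a dimension and Zariski-closure argument), and this is decidable by Tarski--Seidenberg. Termination then requires showing that the search space can be bounded, or that validity is semidecidable; the latter should follow from completeness of first-order logic.

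The remaining parts of Theorem~\ref{thm:Spectrality} are concrete checks.
\begin{itemize}
\item $T_D$: a point $\le$ should be isolated in its closure by finitely many conditions $s\not<t$. This uses finite generation via the tree description, which has finite depth.
\item Non-Noetherianity and infinite Krull dimension of $\cP(\Mon_n)$: use chains of preorders like Example~\ref{ex:n2}, built from ideals $(x,y^k)$.
\item $\cP(\ZZ^n)$: read Krull dimension $n$ from the preotree depth, and exhibit infinite families of closed points for non-Noetherianity when $n\ge2$.
\end{itemize}

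The main obstacle is item 4: showing that the monoid decomposition has finite depth and that the compatibility conditions between levels are semialgebraic modulo hyperplanes. The compatibility conditions involve quantifying over all elements of infinite orbits. I would first try to reduce them to finitely many generators of the relevant ideals, using Dickson's lemma.
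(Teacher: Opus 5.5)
Your proposal leaves the core of the statement open. Item~4 is where Theorems~\ref{thm:Main}, \ref{thm:Ineqs} and \ref{thm:Algorithm} are actually proved, and you name it as the main obstacle without giving a mechanism. The mechanism suggested in your opening paragraph does not work: the set $\{u\in\Pi\mid us\approx s\}$ is a submonoid but in general not a face. Take $\Pi=S=\ZZ_{\geq 0}^2$ with $(a,b)\leq(c,d)\Leftrightarrow a-b\leq c-d$. Then $(1,1)+s\approx s$ for every $s$, whereas its summands $(1,0)$ and $(0,1)$ move every $s$ strictly up, respectively down. Moreover, Example~\ref{ex:n2bis} has infinitely many $\approx'$-classes and no nontrivial submonoid acts on any of them, so localising and discarding orbits does not give finitely many finitely generated pieces.

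The missing idea is the asymptotic range. At each vertex one takes the coarser preorder $\leq'$ pulled back from $\Gr(S)$, which preotrees handle. One then chooses a maximal $\Pi$-stable $T$ whose intersection with each $\approx'$-class is empty or a single $\approx$-class, and which is reached from every $s$ (Proposition~\ref{prop:T}, via Zorn and Noetherianity of $K\Pi$). Then $S$ is split by the pair of ideals $P_s=\{u\mid us\in S_{\leq T}\}$, $Q_s=\{u\mid us\in S_{\geq T}\}$. This gives the following:
\begin{itemize}
\item Faces arise as stabilisers of ideals under $(u,J)\mapsto(J:u)$ (Lemma~\ref{lm:Action}). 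They are generated by a proper subset of the generators unless the piece is $T$, on which $\leq$ equals $\leq'$.
\item There are finitely many pairs, and each piece is finitely generated over its stabiliser (Lemma~\ref{lm:TJPi}).
\item $s\approx' t$ and $s\leq t$ force $(P_s,Q_s)\preceq(P_t,Q_t)$. This is exactly the partree axiom that lets $\leq$ be recovered from $\leq'$ and its restrictions to the pieces.
\end{itemize}
Induction on the number of generators then terminates (Theorem~\ref{thm:MonoidPartree}).

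Semi-algebraicity of the gluing conditions also needs more than Dickson's lemma. The conditions ``child preorder implies parent preorder'' and $s\leq t\Rightarrow u_is\leq u_it$ are implications between preorders given by different preotrees on different sets. The paper handles them with three tools:
\begin{itemize}
\item countably many pull-backs of preotrees (Proposition~\ref{prop:PullBack}), which is where the countable unions come from;
\item Proposition~\ref{prop:SemiAlgebraic1}, which needs minimal preotrees and the normalisation $g_x(S_y)\in(0,|f_x(u_0)|)$ in rank one (otherwise conditions involving $\lceil\cdot\rceil$ are not semi-algebraic);
\item Proposition~\ref{prop:SemiAlgebraic2}, where a quantifier $\forall u\in\Pi_0$ over a finitely generated submonoid becomes semi-algebraic after scaling and replacing $\ZZ_{\geq 0}^m$ by $\RR_{\geq 0}^m$.
\end{itemize}
Dickson's lemma only reduces to finitely many generators of a $\Pi_0$-set and leaves that quantifier in place.

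The rest of your plan is sound, partly by different routes. The patch-topology (Hochster) argument is a valid shortcut for the paper's axiom-by-axiom proof of spectrality. Semi-deciding validity by completeness (the class is finitely axiomatised) replaces the paper's finite-box compactness search.

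Three smaller points. First, $T_D$ needs the congruence $\approx_0$ to be finitely generated, which comes from Noetherianity of $KS$ over $K\Pi$, not from the finite depth of a tree. Second, non-Noetherianity of $\cP(\ZZ^n)$ for $n\geq 2$ needs a strictly descending chain of closed sets, as in the proof of Proposition~\ref{prop:Zn}. Infinitely many closed points prove nothing: the spectrum of $\ZZ$ is Noetherian. Third, pure-dimensionality is not addressed.
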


For Theorems~\ref{thm:Spectrality}, \ref{thm:Main}, and
\ref{thm:Algorithm}, it is clear what these analogues are.  For
Theorem~\ref{thm:Ineqs}, the analogue concerns finitely generated
free $\Mon_n$-sets of the form $\Mon_n \times [m]$, where the
action is by multiplication on the first factor.

Note that if $\le$ is an order on a $\Pi$-set $S$, then the complement
of $\{s<t\}$ is $\{s>t\}$ for any $s,t\in S$.  The space of {\em orders}
on $S$ can therefore be defined equivalently by taking $\{s<t\}$ as
a basic open (rather than closed) set of the topology; indeed this
is the standard definition in the ordered groups literature (see,
e.g.,~\cite{Sikora06,DNR14,CR16}). In the setting of preorders, however,
these two definitions may yield different spaces. A hint as to why
our definition is more natural in the context
of preorders is that it makes $\cP_\Pi(S)$ an irreducible topological
space: it is the closure of the singleton $\{\triv\}$. We will see more
important ramifications of our choice of topology later on.

Let $\phi: S \to T$ be a map between $\Pi$-sets that is $\Pi$-equivariant, i.e., satisfies 
$\phi(us)=u\phi(s)$ for all $u \in \Pi$ and  $s \in S$. The pull-back map
$\phi^*:\cP_\Pi(T) \to \cP_\Pi(S)$ is defined by setting for ${\leq} \in\cP_\Pi(T)$ and $s,t\in S$,
\[
s\  \phi^*(\leq) \ t
\defiff \phi(s) \leq \phi(t).
\] 
Every preorder $\leq$ on a $\Pi$-set $S$ gives rise to an equivalence relation $\approx$ on $S$ defined by 
setting for $s,t\in S$,
\[
 s \approx t \defiff s \leq t \text{ and } t \leq s. 
\]
The quotient $S/\!\!\approx$ is an
ordered $\Pi$-set, and the quotient map from $S$ onto $S/\!\!\approx$ is an 
order-preserving $\Pi$-equivariant map, i.e., $\approx$ is a \emph{congruence} on the $\Pi$-set $S$.

%%%%%%%%%%%%%%%%%%%%%%%%%%%%%%%%%%%%%%%%%%%

\subsection{Linearising monoids and $\Pi$-sets}

Let $(\Pi,\cdot)$ be a commutative monoid, let $K$ be a field, and let
$S$ be a $\Pi$-set. 

\begin{de}
The {\em monoid $K$-algebra} $K\Pi$ is the set of formal linear combinations
of elements of $\Pi$ with coefficients from $K$, equipped with 
termwise addition and the unique $K$-bilinear multiplication extending $\cdot$ on $\Pi$.
Similarly, we write $KS$ for the set of all formal linear combinations
of elements of $S$; this is a $K\Pi$-module with respect to the bilinear
extension to $K\Pi \times K S \to KS$ of the action $\Pi \times S \to S$. 
\end{de}

Given any preorder $\leq$ on $S$, the vector space 
\[ \langle \{s-t \mid s \approx t\} \rangle_K \]
is a $K\Pi$-submodule of $KS$ that serves as a convenient encoding of the
congruence $\approx$. 
If $\Pi$ and $S$ are finitely generated as
a commutative monoid and $\Pi$-set, respectively, then $K\Pi$
is a Noetherian ring  and $KS$ is a Noetherian $K\Pi$-module; in 
particular, the $K\Pi$-submodule defined above is finitely generated. This means that
$\approx$ can always be given by a finite amount of data. However, the
preorder $\leq$ itself will require (finitely many) {\em real numbers}
in its description.

%%%%%%%%%%%%%%%%%%%%%%%%%%%%%%%%%%%%%%%%%%%

\subsection{Topological properties of the pull-back map}

Let $\Pi$ be a commutative monoid, let $S,T$ be $\Pi$-sets, and let
$\phi:S \to T$ be a $\Pi$-equivariant map.

\begin{prop} \label{prop:OpenEmbedding}
The pull-back $\phi^*:\cP_\Pi(T) \to \cP_\Pi(S)$ is continuous. If, moreover,
$\Pi$ is a finitely generated monoid, $S$ is a finitely generated
$\Pi$-set, and $\phi$ is surjective, then $\phi^*$ is a homeomorphism
between $\cP_\Pi(T)$ and an open subset of $\cP_\Pi(S)$.
\end{prop}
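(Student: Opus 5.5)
Continuity is immediate from the definition of the topology. The preimage under $\phi^*$ of a subbasic closed set $\{s<t\}\subseteq \cP_\Pi(S)$ is the set of ${\leq}\in\cP_\Pi(T)$ with $\phi(s)<\phi(t)$, which is the subbasic closed set $\{\phi(s)<\phi(t)\}\subseteq\cP_\Pi(T)$.

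Now assume $\phi$ is surjective. Injectivity of $\phi^*$ follows because every pair in $T$ is of the form $\phi(s),\phi(t)$, and $\phi(s)\leq\phi(t)$ is read off from $\phi^*(\leq)$. The image of $\phi^*$ is exactly the set $U$ of preorders $\leq$ on $S$ that are constant on the fibres of $\phi$, i.e.\ those with $s\approx s'$ whenever $\phi(s)=\phi(s')$. Indeed, such a preorder descends to a relation on $T$; it is reflexive, transitive and total, and it is $\Pi$-compatible by equivariance of $\phi$. Conversely, pull-backs are clearly constant on fibres.

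The map $\phi^*$ is a homeomorphism onto $U$ because the subspace topology on $U$ is generated by the sets $\{s<t\}\cap U$, and these are the images of the subbasic closed sets $\{\phi(s)<\phi(t)\}$. So closed subbasic sets correspond exactly, and $\phi^*$ is a bijection onto $U$ mapping a subbasis of closed sets to a subbasis of closed sets of $U$.

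The main step is showing that $U$ is open. Let $E=\{(s,s')\mid \phi(s)=\phi(s')\}$ be the kernel congruence of $\phi$. It is a $\Pi$-congruence on $S$, and the complement of $U$ is
\[
\bigcup_{(s,s')\in E}\bigl(\{s<s'\}\cup\{s'<s\}\bigr),
\]
a union of closed sets. If $E$ is infinite, this union is not obviously closed, so we need finiteness. Here we use that $E$ is finitely generated as a $\Pi$-congruence. Linearise: the $K\Pi$-submodule $M=\langle s-s' \mid (s,s')\in E\rangle_K$ of $KS$ is finitely generated because $KS$ is Noetherian. Hence finitely many pairs $(s_1,s_1'),\ldots,(s_m,s_m')\in E$ already span $M$ as a $K\Pi$-module; take the differences appearing in finitely many generators, each a finite combination of elements $s-s'$ with $(s,s')\in E$.

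The claim is that a preorder $\leq$ with $s_j\approx s_j'$ for all $j$ satisfies $s\approx s'$ for all $(s,s')\in E$. The congruence $\approx$ contains $(s_j,s_j')$, so its linearisation $M_\approx$ contains each $s_j-s_j'$. Since $M_\approx$ is a $K\Pi$-submodule, it contains $M$. Moreover, $s-s'\in M_\approx$ implies $s\approx s'$, because $M_\approx$ is spanned by differences within $\approx$-classes: the map $KS\to K(S/\!\approx)$ has kernel exactly $M_\approx$, so $s-s'$ lies in it iff $s$ and $s'$ have the same image. Therefore
\[
U=\bigcap_{j=1}^m\Bigl(\cP_\Pi(S)\setminus\bigl(\{s_j<s_j'\}\cup\{s_j'<s_j\}\bigr)\Bigr),
\]
a finite intersection of open sets, hence open.

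The obstacle is precisely this finite-generation step for the kernel congruence. The linearisation trick via the Noetherianity of $KS$, which is where finite generation of $\Pi$ and $S$ enters, resolves it.
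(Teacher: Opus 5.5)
Your proof is correct and follows the paper's argument essentially step for step. It gets continuity from preimages of the subbasic closed sets $\{s<t\}$, identifies the image as the set $U$ of preorders collapsing the fibres of $\phi$, proves $U$ open via Noetherianity of $KS$ (finitely many generating differences $s_j-s_j'$), and proves the inverse continuous by matching subbasic closed sets. Your observation that $M_\approx$ is the kernel of $KS\to K(S/\!\approx)$, so that $s_j\approx s_j'$ for all $j$ forces $s\approx s'$ for every $(s,s')\in E$, usefully makes explicit a step the paper only asserts.
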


\begin{proof}
For continuity, it suffices to check that for any $s,s' \in S$, the preimage
of $\{{\leq}\in \cP_\Pi(S) \mid s<s'\}$ under $\phi^*$ is closed in $\cP_\Pi(T)$.
This preimage is precisely $\{{\leq}\in \cP_\Pi(T) \mid \phi(s)<\phi(s')\}$,
and therefore closed  in $\cP_\Pi(T)$.

Next, assume that $\Pi$ is finitely generated as a monoid, $S$ is a 
finitely generated $\Pi$-set, and $\phi$ is surjective.  Then $\phi^*$
is injective, and its image equals
\[ U:=\{{\leq}\in \cP_\Pi(S) \mid \forall s,s' \in S: 
	\phi(s)=\phi(s') \Rightarrow s \approx s'\}. \]
By Noetherianity, the $K\Pi$-submodule of $KS$ spanned by all $s-s'$
with $\phi(s)=\phi(s')$ is generated by finitely many differences
$s_i-s_i'$ with  $i\in[k]:=\{1,\ldots,k\}$. It follows therefore that 
\[ 
U=\{{\leq}\in \cP_\Pi(S) \mid \forall i \in [k]: s_i \leq s_i' \text{and } s_i' \leq s_i\}
\]
is an intersection of finitely many open sets and hence open. It remains
to show that the inverse $U \to \cP_\Pi(T)$ of $\phi^*$ is continuous, i.e.,
that the bijection $\phi^*:\cP_\Pi(T) \to U$ maps closed sets to closed sets.
For $t,t' \in T$ and any choice of $s,s' \in S$ with $\phi(s)=t$ and $\phi(s')=t'$,
\[ 
\phi^*(\{{\leq}\in \cP_\Pi(T) \mid t<t'\})
=\{{\leq} \in U \mid s<s'\}. \]
This implies that all closed sets in $\cP_\Pi(T)$ are mapped to closed sets
in $U$.
\end{proof}

\begin{cor} \label{cor:Homomorphism}
For any homomorphism $\rho:\Pi_1 \to \Pi_2$ of commutative monoids,
the pull-back $\rho^*:\cP(\Pi_2) \to \cP(\Pi_1)$ is continuous, and if
$\rho$ is surjective and $\Pi_1$ is finitely generated, then $\rho^*$
is a homeomorphism between $\cP(\Pi_2)$ and an open subset of $\cP(\Pi_1)$.
\end{cor}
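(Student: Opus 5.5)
The plan is to deduce the corollary directly from Proposition~\ref{prop:OpenEmbedding}. We view $\Pi_2$ as a $\Pi_1$-set via $\rho$, and $\Pi_1$ as a $\Pi_1$-set under left multiplication. Concretely, $\Pi_1$ acts on $\Pi_2$ by $u\cdot w := \rho(u)w$. Since $\rho$ is a monoid homomorphism, $1\cdot w = w$ and $(uv)\cdot w = u\cdot(v\cdot w)$ hold, so this is a $\Pi_1$-set. The map $\rho:\Pi_1\to\Pi_2$ is $\Pi_1$-equivariant, because $\rho(uv)=\rho(u)\rho(v)=u\cdot\rho(v)$.

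The first step is to compare $\cP(\Pi_2)=\cP_{\Pi_2}(\Pi_2)$ with $\cP_{\Pi_1}(\Pi_2)$. There is an inclusion $\cP_{\Pi_2}(\Pi_2)\subseteq\cP_{\Pi_1}(\Pi_2)$, since compatibility with multiplication by all of $\Pi_2$ implies compatibility with multiplication by elements $\rho(u)$. Both spaces carry the topology whose closed subbasic sets are $\{s<t\}$, so $\cP(\Pi_2)$ carries the subspace topology. The pull-back $\rho^*:\cP(\Pi_2)\to\cP(\Pi_1)$ is the restriction of $\rho^*:\cP_{\Pi_1}(\Pi_2)\to\cP_{\Pi_1}(\Pi_1)$, so continuity follows from the first part of Proposition~\ref{prop:OpenEmbedding}.

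Now suppose $\rho$ is surjective and $\Pi_1$ is finitely generated. Then every $w\in\Pi_2$ has the form $\rho(u)$, so the $\Pi_1$-action and the $\Pi_2$-action on $\Pi_2$ have the same orbits of multiplications. It follows that $\cP_{\Pi_1}(\Pi_2)=\cP_{\Pi_2}(\Pi_2)$, with the same topology.

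The second step applies the second part of Proposition~\ref{prop:OpenEmbedding} with $\Pi=\Pi_1$, $S=\Pi_1$ and $T=\Pi_2$. Here $S$ is a finitely generated $\Pi_1$-set, generated by $1$, and $\phi=\rho$ is surjective. The proposition then gives that $\rho^*$ is a homeomorphism onto an open subset of $\cP(\Pi_1)$.

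The only point needing care is the identification of $\cP_{\Pi_1}(\Pi_2)$ with $\cP(\Pi_2)$, which uses surjectivity. This is routine, so I expect no real obstacle.
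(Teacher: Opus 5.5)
Your proposal is correct and follows essentially the same route as the paper. The paper likewise makes $\Pi_2$ a $\Pi_1$-set via $\rho$, obtains continuity as a restriction of $\rho^*:\cP_{\Pi_1}(\Pi_2)\to\cP_{\Pi_1}(\Pi_1)$, and in the surjective case uses $\cP(\Pi_2)=\cP_{\Pi_1}(\Pi_2)$ together with the second part of Proposition~\ref{prop:OpenEmbedding}.
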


\begin{proof}
Let us regard $\Pi_2$ as a $\Pi_1$-set via $(u,v)
\mapsto \rho(u)v$. Then $\rho:\Pi_1 \to \Pi_2$ is
$\Pi_1$-equivariant, and hence $\rho^*:\cP_{\Pi_1}(\Pi_2) \to
\cP_{\Pi_1}(\Pi_1)=\cP(\Pi_1)$ is continuous, by the first statement
of Proposition~\ref{prop:OpenEmbedding}. The space $\cP(\Pi_2)$ is
a subspace of $\cP_{\Pi_1}(\Pi_2)$, and the map $\rho^*$ from the
corollary is a restriction of the map $\rho^*$ between spaces of preorders on $\Pi_1$-sets,
and hence also continuous. 

Furthermore, if $\rho$ is surjective, then
$\cP(\Pi_2)=\cP_{\Pi_1}(\Pi_2)$, and the $\Pi_1$-set $\Pi_2$ is
generated by the neutral element. Hence the second statement 
follows from the second statement of
Proposition~\ref{prop:OpenEmbedding}. 
\end{proof}

\begin{prop} \label{prop:Torsion}
Let $\Pi$ be a commutative monoid, let $S$ be a $\Pi$-set, and let $u \in
\Pi$, $s \in S$, and $n \in \ZZ_{\geq 1}$. If $u^ns=s$, then $s \approx
us$ for any ${\leq}\in\cP_\Pi(S)$. As a consequence, if $\Pi' \subseteq \Pi$ is the 
submonoid (and group) consisting
of all elements $u\in\Pi$ satisfying $u^n=1$ for some $n\in\ZZ_{\geq 1}$ and 
$\phi:S \to S/\Pi'$ is the quotient map, then
$\phi^*:\cP_\Pi(S/\Pi') \to \cP_\Pi(S)$ is a homeomorphism. 
\end{prop}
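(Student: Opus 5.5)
First I will show that $u^n s = s$ forces $s \approx us$ for every ${\leq} \in \cP_\Pi(S)$. Since $\leq$ is total, either $s \leq us$ or $us \leq s$. Suppose $s \leq us$. Applying compatibility with the action of $u$ repeatedly gives the chain
\[ s \leq us \leq u^2 s \leq \cdots \leq u^n s = s. \]
Transitivity then yields $us \leq s$, so $s \approx us$. The case $us \leq s$ is symmetric: the chain $s = u^n s \leq \cdots \leq us \leq s$ gives $s \leq us$.

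Next I will check that $\Pi'$ is a submonoid and a group. If $u^n=1$ and $v^m=1$, then commutativity gives $(uv)^{nm}=1$. Each $u\in\Pi'$ with $u^n=1$ has inverse $u^{n-1}\in\Pi'$. Hence $\Pi'$ acts on $S$, and $S/\Pi'$, the set of orbits, is a $\Pi$-set via $u(\Pi' s) := \Pi'(us)$. This is well defined because $\Pi$ is commutative: $u(vs) = v(us)$ for $v \in \Pi'$. The quotient map $\phi$ is then surjective and $\Pi$-equivariant.

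For the homeomorphism I will not invoke Proposition~\ref{prop:OpenEmbedding}, since $\Pi$ and $S$ are not assumed finitely generated here. I will argue directly instead.

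\textbf{Continuity.} $\phi^*$ is continuous by the first part of Proposition~\ref{prop:OpenEmbedding}.

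\textbf{Injectivity.} Since $\phi$ is surjective, $\phi^*$ is injective.

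\textbf{Surjectivity.} Let ${\leq}\in\cP_\Pi(S)$. If $\phi(s)=\phi(t)$, then $t=vs$ for some $v\in\Pi'$ with $v^n=1$, so $v^n s = s$. The first part gives $s\approx vs = t$. Hence $\leq$ is constant on fibres of $\phi$ and descends to a relation $\leq'$ on $S/\Pi'$ with $\phi^*(\leq')={\leq}$. The relation $\leq'$ is reflexive, transitive and total because $\leq$ is. It is compatible with the action because $\phi$ is equivariant and surjective.

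\textbf{Closed sets map to closed sets.} For $t=\phi(s)$ and $t'=\phi(s')$, the bijection $\phi^*$ maps $\{t<t'\}$ onto $\{s<s'\}$, which is closed. Since $\phi^*$ is a bijection, it commutes with intersections, so it maps every closed set (an intersection of finite unions of such subbasic closed sets) to a closed set. So the inverse of $\phi^*$ is continuous, and $\phi^*$ is a homeomorphism.

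The only step requiring care is checking that the descended relation $\leq'$ is well defined and compatible with the action. This reduces to the fibre-constancy established in the first part.
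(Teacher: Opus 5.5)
Your proof is correct and follows essentially the same route as the paper. You use the chain $s \leq us \leq \cdots \leq u^n s = s$ for the first claim, get bijectivity of $\phi^*$ from fibre-constancy, take continuity from the first part of Proposition~\ref{prop:OpenEmbedding}, and get continuity of the inverse because $\phi^*$ carries each subbasic closed set $\{\phi(s)<\phi(s')\}$ onto $\{s<s'\}$. Your remark that you will ``not invoke'' Proposition~\ref{prop:OpenEmbedding} should instead say that you avoid its second part: the first part, which you do use, needs no finite generation.
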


\begin{proof}
For the first statement, suppose that $u^ns=s$ and consider any
${\leq}\in\cP_\Pi(S)$. 
If  $s \leq us$, then by repeatedly acting
with $u$ we find that $s \leq us \leq u^2s \leq \cdots \leq u^ns=s$, so
that also $us \leq s$ and hence $s \approx us$; a similar argument
applies if $us \leq s$.

The above implies that every preorder on $S$ factors via
$S/\Pi'$, so that $\phi^*$ is a bijection. By the first part of
Proposition~\ref{prop:OpenEmbedding}, $\phi^*$ is continuous. It remains to
show that the inverse of $\phi^*$ is also continuous, i.e., that the
closed set $\{s < t\}$ is mapped to a closed set. This is the case:
it is mapped to the closed set $\{\Pi' s < \Pi' t\}$.
\end{proof}

%%%%%%%%%%%%%%%%%%%%%%%%%%%%%%%%%%%%%%%%%%%

\section{Spectrality of the space of preorders}
\label{sec:Spectrality}

%%%%%%%%%%%%%%%%%%%%%%%%%%%%%%%%%%%%%%%%%%%

Let $\Pi$ be a monoid and let $S$ be a $\Pi$-set. The following theorem is a
generalisation of the first part of Theorem~\ref{thm:Spectrality} to this setting. 

\begin{thm} \label{thm:Spectrality2}
The space $\cP_\Pi(S)$ is irreducible and spectral. Furthermore, if $\Pi$
is finitely generated and $S$ is a finitely generated $\Pi$-set, then
$\cP_\Pi(S)$ is also $T_D$, i.e., every point is open in its closure.
\end{thm}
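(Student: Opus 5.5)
We will realise $\cP_\Pi(S)$ as a subspace of a product of Sierpi\'nski spaces and invoke Hochster's characterisation of spectral spaces~\cite{Hochster69}. The $T_D$ property will then follow from the Noetherianity argument already used in Proposition~\ref{prop:OpenEmbedding}.

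\emph{Spectrality.} Let $\mathbb{S}=\{0,1\}$ be the Sierpi\'nski space with open point $\{1\}$. The product $X:=\mathbb{S}^{S\times S}$ is spectral. Send a preorder $\leq$ to the point $x\in X$ with $x_{(t,s)}=1$ if and only if $t\leq s$. This map is injective, since $\leq$ is recovered from $x$. The closed sets $\{s<t\}$ of $\cP_\Pi(S)$ are exactly the complements of the preimages of the subbasic opens $\{x_{(t,s)}=1\}$. Hence the map is a homeomorphism onto its image $Y$ with the subspace topology.

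The image $Y$ is cut out by the following conditions:
\begin{itemize}
\item reflexivity: $x_{(s,s)}=1$;
\item totality: $x_{(s,t)}=1$ or $x_{(t,s)}=1$;
\item transitivity: $x_{(s,t)}=x_{(t,q)}=1 \Rightarrow x_{(s,q)}=1$;
\item compatibility: $x_{(s,t)}=1\Rightarrow x_{(us,ut)}=1$.
\end{itemize}
Each condition involves finitely many coordinates, so each defines a constructible subset of $X$. Therefore $Y$ is closed in the patch topology. A patch-closed subspace of a spectral space is spectral, so $\cP_\Pi(S)$ is spectral.

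\emph{Irreducibility.} We show that $\triv$ is a generic point. Every nonempty open set contains a nonempty basic open set $\bigcap_{i}\{t_i\leq s_i\}$ with finitely many $i$, and every such set contains $\triv$. So $\triv$ lies in every nonempty open set, which means $\cP_\Pi(S)=\overline{\{\triv\}}$, and this space is irreducible.

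\emph{$T_D$.} First we identify closures of points. A preorder $\leq'$ lies in $\overline{\{\leq\}}$ if and only if every basic open set containing $\leq'$ also contains $\leq$. Equivalently, ${\leq'}\subseteq{\leq}$ as subsets of $S\times S$.

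Now assume $\Pi$ and $S$ are finitely generated, and let $M\subseteq KS$ be the $K\Pi$-submodule spanned by $\{s-t\mid s\approx t\}$. By Noetherianity, $M$ is generated by finitely many differences $s_i-s_i'$ with $s_i\approx s_i'$. Set
\[ U:=\bigcap_i \bigl(\{s_i\leq s_i'\}\cap\{s_i'\leq s_i\}\bigr), \]
which is open. We claim $U\cap\overline{\{\leq\}}=\{\leq\}$.

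Let ${\leq'}\in U\cap\overline{\{\leq\}}$, and let $M'$ be the submodule spanned by $\{s-t\mid s\approx' t\}$. Membership in $U$ gives $M\subseteq M'$. The key step is the observation that for any congruence $\approx'$ on $S$, a difference $s-t$ lies in the span of $\{s-t\mid s\approx' t\}$ if and only if $s\approx' t$. The reason is that elements of this span have coefficient sum zero on each $\approx'$-class. It follows that ${\approx}\subseteq{\approx'}$.

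Next, suppose $s<t$. Then $t\not\leq s$, and since ${\leq'}\subseteq{\leq}$ we get $t\not\leq' s$; by totality $s<'t$. Combining this with ${\approx}\subseteq{\approx'}$ gives ${\leq}\subseteq{\leq'}$, so ${\leq'}={\leq}$.

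The only step needing care is the span-versus-congruence observation, and it is routine.
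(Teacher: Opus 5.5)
Your proof is correct, and for spectrality it takes a genuinely different route from the paper.

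\textbf{Spectrality.} The paper checks Hochster's axioms by hand.
\begin{itemize}
\item It shows $T_0$ directly.
\item It proves quasi-compactness of every finite union of finite intersections of sets $\{s\le t\}$, using a Zorn's-lemma argument that extracts a preorder from a maximal family of closed sets with the finite intersection property.
\item It proves sobriety by showing that an irreducible closed set $C$ has generic point $s\leq_0 t :\Leftrightarrow \exists {\leq}\in C\colon s\le t$.
\end{itemize}
You instead embed $\cP_\Pi(S)$ in the Sierpi\'nski cube indexed by $S\times S$. Every preorder axiom involves only finitely many coordinates, so the image is patch-closed. You then quote Hochster: products of spectral spaces are spectral, and patch-closed subspaces of spectral spaces are spectral.

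Your route is shorter and more conceptual. It shows that spectrality is a formal consequence of the axioms being finitary, so it applies unchanged to any class of relations cut out by such conditions. It is also close in spirit to the Stone-duality remark after the paper's proof. The cost is that it delegates the compactness content (essentially Tychonoff) to general theory. The paper's argument is self-contained and exhibits the quasi-compact opens and the generic points explicitly.

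\textbf{Irreducibility and $T_D$.} Your irreducibility argument is the paper's. Your $T_D$ argument is also the paper's, except that you make explicit a step the paper leaves implicit. Namely, $s-t$ lies in the $K$-span of $\{a-b \mid a\approx' b\}$ only if $s\approx' t$, as one sees by comparing coefficient sums on $\approx'$-classes. This is exactly what justifies replacing the infinitely many conditions $s\approx_0 t\Rightarrow s\approx t$ by the finitely many open conditions on the chosen generators.
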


We verify one by one the axioms of spectral spaces and the property $T_D$.

\begin{lm}
The space $\cP_\Pi(S)$ is $T_0$.
\end{lm}

\begin{proof}
Let $\leq_0,\leq_1$ be distinct elements of $\cP_\Pi(S)$. Then there exist
$s,t \in S$ with $s \leq_0 t$ but $s \nleq_1 t$, or vice versa. In
the first case, the open set $\{s \leq t\}$ contains $\leq_0$ but not
$\leq_1$, and in the second case, it contains $\leq_1$ but not $\leq_0$.
\end{proof}

\begin{lm}
The space $\cP_\Pi(S)$ is quasi-compact. Furthermore, the collection of
quasi-compact open subsets of $\cP_\Pi(S)$ is closed under finite intersections
and a basis of the topology.
\end{lm}

\begin{proof}
Let $X$ be a quasi-compact open subset of $\cP_\Pi(S)$. Since $X$ is open,
we can write $X$ as a union of finite intersections of the form 
$\{s_1 \leq t_1\} \cap \cdots \cap \{s_m \leq t_m\}$,
and, since $X$ is quasi-compact, this union may be assumed to be finite. That is, for suitable nonnegative integers $n,m_a$ and $s'_{ac},t'_{ac}\in S$,
\[ X=\bigcup_{a=1}^n 
\{s'_{a1} \leq t'_{a1}\} \cap \cdots \cap \{s'_{am_a} \le t'_{am_a}\}. \]
 The collection of subsets of this form is a basis of the topology and
closed under finite intersections. It therefore suffices now to show that, conversely, any
$X$ that arises as a finite union above, is quasi-compact. This holds in
particular for $X=\cP_\Pi(S)$, which is such a union with $n=1$ and $m_a=0$.

To this end, let $\cC=\{C_i \mid i \in
I\}$ be a collection of closed sets in $\cP_\Pi(S)$ that satisfy the finite
intersection property with $X$, i.e., the intersection of any finite number of
the $C_i$ has nonempty intersection with $X$. We have to show that $\bigcap_{i \in I} C_i$
has nonempty intersection with $X$. By Zorn's lemma, we may assume that the collection
$\cC$ is maximal subject to the finite intersection property
with $X$.
This implies that for $s,t \in S$ the closed set $\{s>t\} \subseteq
\cP_\Pi(S)$ is {\em not} in $\cC$ if and only if there exists a finite subset
$J$ of $I$ such that any preorder $\leq$ in $X \cap \bigcap_{j \in J} C_j$
satisfies $s \leq t$. This leads us to define $\leq_0$ on $S$ by
setting, for $s,t \in S$, 
\[ s \leq_0 t \defiff \{s > t\} \notin \cC. \] 
Clearly, $\leq_0$ is reflexive. For transitivity, let $q,s,t \in S$ satisfy $q \leq_0
s$ and $s \leq_0 t$. Then there exist finite sets $J,L \subseteq I$
such that any preorder $\leq$ in $X \cap \bigcap_{j \in J}
C_j$ satisfies $q \leq s$ and any preorder $\leq$ in 
$X \cap \bigcap_{l \in L} C_l$ satisfies $s \leq t$. So any preorder $\leq$
in $X \cap \bigcap_{j \in J \cup L} C_j$ satisfies $q \leq s \leq t$, yielding 
$q \leq_0 t$. Hence $\leq_0$ is transitive. The
fact that $\leq_0$ is total follows from the fact that if $\{s > t\}$
and $\{t > s\}$ were both in $\cC$, then the emptiness of $\{s > t\}
\cap \{ t >s\}$ would violate the finite intersection property with
$X$. Next
we check compatibility with the action of $\Pi$. Let $s,t \in S$ with
$s \leq_0 t$ and let $u \in \Pi$. Then there exists a finite subset $J \subseteq I$ such
that any preorder $\leq$ in $X \cap \bigcap_{j \in J} C_j$ satisfies $s \leq t$, and 
hence also $us \leq ut$. So $us \leq_0 ut$. 

Finally, we argue by contradiction that $\leq_0$ lies in $X \cap \bigcap_{i \in I} C_i$. 
Suppose first that $\leq_0$ is not in $X$. Then for each $a=1,\ldots,n$, there exists a
$c_a \in [m_a]$ such that $s'_{ac_a} >_0 t'_{ac_a}$, and hence, by definition, 
$\{s'_{ac_a} > t'_{ac_a}\} \in \cC$. But the intersection of these sets is disjoint from $X$, a
contradiction to the finite intersection property with $X$. Hence $\leq_0$ is in $X$. 

Suppose next that $\leq_0$ is not in $\bigcap_{i \in I} C_i$. Then $\leq_0$ is not in $C_l$ for some $l\in I$, and, since $C_l$ is an intersection of sets of the form 
\[ \{s_1 < t_1\} \cup \cdots \cup \{s_k < t_k\}, \]
one of these sets does not contain $\leq_0$. Hence we may assume that $t_i
\leq_0 s_i$ for each $i=1,\ldots,k$, and there exist finite 
subsets $J_i \subseteq I$ such that any preorder $\leq$ in $X \cap \bigcap_{j \in
J_i} C_j$ satisfies $t_i \leq s_i$. But this implies that the finite
intersection 
\[ C_l \cap \bigcap_{i=1}^k \bigcap_{j \in J_i} C_j \]
is disjoint from $X$, which contradicts the finite intersection property of  $\cC$ with $X$. 
Hence $\leq_0$ is in $\bigcap_{i \in I} C_i$.
\end{proof}

\begin{lm}
The space $\cP_\Pi(S)$ is sober and irreducible.
\end{lm}

\begin{proof}
Let $C \subseteq \cP_\Pi(S)$ be an irreducible, closed subset. We show
that $C$ contains a generic point, i.e., an element $\leq_0$ such that
$\overline{\{\leq_0\}}=C$. For $s,t\in S$, set
\[ s \leq_0 t  \defiff \exists {\leq}\in C: s \leq t. \]
This relation is clearly reflexive and total (since $C$ is nonempty), and compatible with the action
of $\Pi$. We claim that $\leq_0$ is transitive. If not, then
there exist $q,s,t \in S$ and preorders $\leq_1,\leq_2$ in $C$ with $q
\leq_1 s$ and $s \leq_2 t$ but no preorder $\leq$ in $C$ with $q \leq
t$. The last statement implies that 
\[ C \subseteq \{ {\leq} \mid s < q \} \cup \{ {\leq} \mid t < s\}, \]
where the two sets on the right are closed. Furthermore, $\leq_1$ is
in the second set but not in the first, and $\leq_2$ is in the first
but not in the second. This contradicts the irreducibility of $C$. So
$\leq_0$ lies in $\cP_\Pi(S)$.

We now show that $\overline{\{\leq_0\}}=C$, i.e., every closed subset
that contains $\leq_0$ also contains $C$. Such a closed subset is an
intersection of finite unions of the form
\[ E=\{s_1 < t_1\} \cup \cdots \cup \{s_k < t_k\}. \]
It therefore suffices to show that if $\leq_0$ is in $E$, then 
$C\subseteq E$. But if  $\leq_0$ is in $E$, then $s_i <_0 t_i$ for some $i\in\{1,\dots,k\}$, 
which means that $s_i < t_i$ for all $\leq$ in $C$, so $C\subseteq E$.

This completes the proof that $\cP_\Pi(S)$ is sober. It is irreducible
because it is the closure of the generic trivial preorder. 
\end{proof}

\begin{lm}
If $\Pi$ is a finitely generated commutative monoid and $S$ is a
finitely generated $\Pi$-set, then $\cP_\Pi(S)$ is $T_D$.
\end{lm}

\begin{proof}
Let $\leq_0$ be any point in $\cP_\Pi(S)$. Then the
closure $\overline{\{\leq_0\}}$ is
\[
\{{\leq}\in \cP_\Pi(S) \mid \forall s,t \in S: s<_0t \Rightarrow s<t\}, 
\]
and $\{\leq_0\}$ is
\[ \{ {\leq}\in \overline{\{\leq_0\}} \mid \forall s,t \in S: s
\approx_0 t \Rightarrow s \approx t\}. \]
By Noetherianity, the $K\Pi$-submodule of $KS$ spanned by all $s-t$ with $s \approx_0 t$
is finitely generated, say by $s_i-t_i$ with $i=1,\ldots,k$. The condition on $\leq$ in the 
expression above is therefore equivalent to the
conjunction of the finitely many open conditions $s_i \leq t_i$ and $t_i
\leq s_i$. Hence, $\{\leq_0\}$ is open in its closure.
\end{proof}

We remark that an alternative proof of the spectrality of $\cP_\Pi(S)$ can be given via Stone duality for bounded distributive lattices~\cite{Sto38}. Let $L$ be the bounded distributive lattice of subsets of $\cP_\Pi(S)$ generated under finite intersections and finite unions by the open sets $\{s \leq r\}$. Using Stone duality, it can be shown that the spectrum of prime ideals of $L$ equipped with the spectral topology is a spectral space that is homeomorphic to  $\cP_\Pi(S)$.

%%%%%%%%%%%%%%%%%%%%%%%%%%%%%%%%%%%%%%%%%%%

\section{The case of groups} \label{sec:Groups}

%%%%%%%%%%%%%%%%%%%%%%%%%%%%%%%%%%%%%%%%%%%

\subsection{Preorders on finitely generated abelian groups}

Let $\Pi$ be an abelian group, let $S$ be a $\Pi$-set, and let $\leq$
be a preorder on $S$. Then for $u \in \Pi$ and $r,s \in S$ the
strict inequality $r<s$ implies not just $ur \leq us$ but even $ur<us$,
as $us \leq ur$ would imply $s=u^{-1}us \leq u^{-1}ur=r$. This makes
preorders much easier to describe in the group setting than in the monoid
setting; we formulate the known result (see~\cite{Robbiano85,Kemper18}) 
below in a form that will be useful to us for the monoid case. 

\begin{prop} \label{prop:GroupPreorder}
Let $\le$ be a preorder on a finitely generated abelian group $\Pi$. 
Then there exist $k \in \ZZ_{\geq 0}$, a sequence $\Pi =
\Pi_0 \supsetneq \Pi_1 \supsetneq \ldots \supsetneq \Pi_k$ of subgroups,
and a nonzero homomorphism $f_i:\Pi_{i-1} \to (\RR,+)$ with
$\ker(f_i)=\Pi_i$ for each $i \in [k]$, such that for all $u,v \in \Pi$,
\begin{align} \label{eq:Ineq} 
u \leq v\defiff \enspace & \text{either }u v^{-1} \in \Pi_i\text{ for
all } i \in [k] \\
&\text{or the smallest $i$ with $u v^{-1} \notin \Pi_i$
satisfies } f_i(uv^{-1}) < 0. \notag
\end{align}
Here $k$ and the $\Pi_i$ are unique, and the $f_i$ are unique up to
positive scalar multiples.  Conversely, any choice of data as above 
defines a preorder on $\Pi$ via \eqref{eq:Ineq}.
\end{prop}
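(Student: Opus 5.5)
We plan to prove existence by induction on the rank of $\Pi$ via convex subgroups, then prove uniqueness, then the converse. Let $\Pi_+ := \{u \in \Pi \mid u \approx 1\}$. Since $\Pi$ is a group, $u\approx 1$ and $v \approx 1$ give $uv \approx u \approx 1$ and $u^{-1}\approx 1$, so $\Pi_+$ is a subgroup. Moreover $\leq$ descends to an \emph{order} on the quotient $\Pi/\Pi_+$, because $u\leq v \iff uv^{-1}\leq 1$. By Proposition~\ref{prop:Torsion}, torsion elements lie in $\Pi_+$. Hence $\Pi/\Pi_+$ is a finitely generated torsion-free, i.e. free, abelian group $\cong\ZZ^m$ with a group order. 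We induct on $m$. If $m=0$, then $k=0$. Otherwise we work with the ordered group $G=\Pi/\Pi_+$.

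\textbf{Key step: constructing $f_1$.} Call a subgroup $C\subseteq G$ convex if $1\leq g\leq c$ with $c\in C$ implies $g\in C$. Convex subgroups are totally ordered by inclusion. Since $G\cong \ZZ^m$, any strictly increasing chain of convex subgroups has strictly increasing ranks: a convex subgroup $C$ is pure, since $g^n\in C$ forces $g$ to lie between $1$ and $g^n$ or their inverses. So there is a largest proper convex subgroup $C_1$. The quotient $G/C_1$ is ordered and has no nontrivial proper convex subgroups, i.e.\ it is archimedean. By H\"older's theorem it embeds order-preservingly into $(\RR,+)$ via a map $\bar f$, unique up to a positive scalar.

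We then set $\Pi_1$ to be the preimage of $C_1$ in $\Pi$ and $f_1=-\bar f\circ(\text{projections})$; the sign convention is chosen so that $u<v \iff f_1(uv^{-1})<0$ when $uv^{-1}\notin\Pi_1$. Then $\ker f_1=\Pi_1$ by injectivity of $\bar f$. Restricting $\leq$ to $\Pi_1$ and applying induction gives $\Pi_1\supsetneq\Pi_2\supsetneq\cdots\supsetneq \Pi_k=\Pi_+$, and \eqref{eq:Ineq} follows by cases on whether $uv^{-1}\in\Pi_1$. Termination holds because the rank of $\Pi_i/\Pi_+$ drops at each step. We expect the main obstacle to be this step, namely getting the archimedean quotient and H\"older's theorem right. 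Alternatively one can avoid H\"older by taking the closed cone in $\RR^m = G\otimes\RR$ spanned by the elements $\geq 1$ and choosing a supporting functional, but convex subgroups seem cleaner.

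\textbf{Uniqueness.} The data determine the preorder intrinsically. $\Pi_k=\Pi_+$ is the equivalence class of $1$, and $\Pi_1$ is the largest proper convex subgroup: each $\Pi_i$ is convex by \eqref{eq:Ineq}, and any element outside $\Pi_1$ generates, together with $\Pi_1$, a convex subgroup containing elements with arbitrarily large $|f_1|$, hence all of $\Pi$ since $f_1(\Pi)$ is archimedean. So the chain is the chain of all convex subgroups, which is intrinsic. Each $f_i$ induces an order embedding of the archimedean group $\Pi_{i-1}/\Pi_i$ into $\RR$, which is unique up to a positive scalar by the uniqueness part of H\"older's theorem.

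\textbf{Converse.} Given arbitrary data $(\Pi_i, f_i)$, reflexivity and totality of the relation defined by \eqref{eq:Ineq} are immediate, and compatibility with multiplication holds because the condition only depends on $uv^{-1}$. For transitivity, take $u\leq v\leq w$ and look at the smallest index where $uv^{-1}$ or $vw^{-1}$ leaves $\Pi_i$. At that index, additivity of $f_i$ gives $f_i(uw^{-1}) = f_i(uv^{-1}) + f_i(vw^{-1})$, with both terms $\leq 0$ and at least one $<0$, so $u\leq w$.
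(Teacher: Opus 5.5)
Your argument is correct, but for existence and uniqueness it takes a different route from the paper; only the converse (transitivity via the first index at which $uv^{-1}$ or $vw^{-1}$ leaves the chain) is the same.

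\textbf{The paper's route.} It pulls the preorder back to $\ZZ^n$ and shows that the convex hulls of $\{\alpha<0\}$ and $\{\beta\geq 0\}$ are disjoint, since a positive real convex relation could be replaced by a positive integer one. It then separates these hulls by a hyperplane to get $f_1$, proves uniqueness up to positive scalars from $B\cup(-B)=\ZZ^n$, and recurses on $\ker f_1$. The cone alternative you mention at the end is close to this.

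\textbf{Your route.} You pass to the ordered free abelian group $\Pi/\Pi_+$ and single out $\Pi_1$ intrinsically as the largest proper convex subgroup. You then obtain $f_1$, together with its uniqueness, from H\"older's theorem on the archimedean quotient.

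\textbf{Comparison.} Your approach gives a transparent reason why $k$ and the $\Pi_i$ are unique: the chain is precisely the chain of all convex subgroups. The price is importing H\"older's theorem, including its uniqueness clause for order-embeddings into $\RR$. The paper's approach is self-contained. It also characterises $f_1$ directly as the unique (up to positive scalars) nonzero homomorphism with $u\leq v\Rightarrow f_1(u)\leq f_1(v)$. That is exactly the form that generalises to $\Pi$-sets in Proposition~\ref{prop:GroupSetPreorder}. In your framework this characterisation needs a short extra argument: any such monotone $f$ has a convex proper kernel, hence $\ker f\subseteq\Pi_1$. It also vanishes on $\Pi_1$, because elements of $\Pi_1$ are infinitely small compared with those outside $\Pi_1$.

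\textbf{Two small slips to fix.}
\begin{enumerate}
\item Since $\bar f$ is order-preserving, for $uv^{-1}\notin\Pi_1$ you already have $u<v\iff \bar f(uv^{-1})<0$. So $f_1$ should be $\bar f$ composed with the projections, \emph{without} the minus sign.
\item Knowing that torsion elements of $\Pi$ lie in $\Pi_+$ does not by itself make $\Pi/\Pi_+$ torsion-free. You need that $u^n\approx 1$ forces $u\approx 1$. This holds because in a group $u>1$ implies $u^n>1$, which is the same argument you use to show that convex subgroups are pure.
\end{enumerate}
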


Note that the first condition on the right-hand side of \eqref{eq:Ineq}
is equivalent to the requirement that $uv^{-1} \in \Pi_k$, and indeed 
the entire condition involves only $uv^{-1}$. This is no surprise: preorders 
on groups are uniquely determined by their positive (or negative) cones, 
consisting of all elements $\geq 1$ (respectively $\leq 1$), and are 
often identified with these subsets in the literature (see, e.g.,~\cite{DNR14,CR16,CM20}).

\begin{proof}
We begin with the last statement. Clearly, \eqref{eq:Ineq} 
defines a reflexive relation that satisfies $u \leq v \Rightarrow wu
\leq wv$. For transitivity, assume that $u \leq v \leq w$. If $uw^{-1}
\in \Pi_i$ for all $i \in [k]$, then $u \leq w$. Otherwise, let $i$ be
minimal such that  $uw^{-1} \notin \Pi_i$. Then at least one of $uv^{-1}$
and $vw^{-1}$ is not in $\Pi_i$. Let $j \leq i$ be minimal such that
one of these is not in $\Pi_j$. Then $f_j(uv^{-1}) \leq 0$
and $f_j(vw^{-1}) \leq 0$, and at least one of these inequalities is
strict. It follows that $f_j(uw^{-1})=f_j(uv^{-1})+f_j(vw^{-1})<0$, 
so $j=i$ and $u\leq w$.

For the converse, we proceed by induction on the rank of $\Pi$. If $\leq$
is the trivial preorder, then we can and must take $k=0$. Otherwise, note
that \eqref{eq:Ineq} requires that the nonzero homomorphism $f_1:\Pi \to
(\RR,+)$ must satisfy $u \leq v \Rightarrow f_1(u) \leq f_1(v)$. We show
that there such a homomorphism exists and is unique up to positive
scalar multiples.

To this end, fix a surjective homomorphism $g: \ZZ^n \to \Pi$ and pull
back the preorder on $\Pi$ to a preorder, also denoted by $\leq$, on $\ZZ^n$. Since
$\leq$ is nontrivial, the subsets
\[ A:=\{\alpha \in \ZZ^n \mid \alpha < 0\} \:\text{ and }\:
B:=\{\beta \in \ZZ^n \mid 0 \leq \beta\} \]
are nonempty. We claim that the convex hulls of $A$ and $B$ in $\RR^n$ are disjoint.
Indeed, if 
\[ \sum_{\alpha} c_\alpha \alpha = \sum_{\beta} d_\beta \beta \]
where the $c_\alpha$ and $d_\beta$ are positive real numbers that add
up to $1$, then we may find positive {\em rational} numbers satisfying the
same identity, and after scaling we may assume that they are positive
integers. By the definition of a preorder, the left-hand side is $<0$
and the right-hand side is $\geq 0$, a contradiction.

Since nonempty disjoint convex sets can be separated by a hyperplane,
there exists a nonzero homomorphism $\tilde{f}_1:\ZZ^n \to (\RR,+)$ and a
real number $c$ such that $\tilde{f}_1 \geq c$ on $B$.  Now $0
\in B$ implies that $c \leq 0$, and, since $B$ is closed under addition and $\RR$ is archimedean, 
it follows that $\tilde{f}_1 \geq 0$ on $B$. Since $B \cup -B=\ZZ^n$, 
the homomorphism $\tilde{f}_1$ is unique up to positive scalar
multiples---indeed, if $\tilde{f}'_1:\ZZ^n \to (\RR,+)$ were another
nonzero homomorphism that is nonnegative on $B$ and not a scalar
multiple of $\tilde{f}_1$, then there would be a vector $v \in \ZZ^n$
with $\tilde{f}'_1(v) < 0 < \tilde{f}_1(v)$. Then it follows that $v
\in B \cap (-B)=\emptyset$, a contradiction. 

Furthermore, $\tilde{f}_1$ factors via a homomorphism $f_1:\Pi \to\RR$; 
just observe that if $\alpha$ is in the kernel of $g$, then both $0 \leq \alpha$ and 
$0 \leq -\alpha$, so $\tilde{f}_1(\alpha)=0$. By
construction, $f_1$ is a nonzero homomorphism that satisfies $f_1(u)
\leq f_1(v)$ for all $u,v \in \Pi$ with $u \leq v$, and up to positive scalar
multiples it is the only such homomorphism.

Now $\Pi_1:=\ker(f_1)$ has rank strictly smaller than that of $\Pi$, and
there exist, by induction, a $k \in \ZZ_{\geq 1}$, a chain $\Pi_1 \supsetneq
\cdots \supsetneq \Pi_k$ of subgroups and homomorphisms $f_i:\Pi_{i-1} \to
\RR$ for each $i \in \{2,\ldots,k\}$ such that the restriction of $\le$ to $\Pi_1$
is given by \eqref{eq:Ineq} with the appropriate modifications. It follows that 
the tuple $(f_1,\ldots,f_k)$ and the chain $\Pi_0 \supsetneq \cdots
\supsetneq \Pi_k$ satisfy \eqref{eq:Ineq}. Moreover, uniqueness follows from
the uniqueness of $f_1$ and from the induction hypothesis.
\end{proof}

\begin{re} \label{re:IrrationalSphere}
In Proposition~\ref{prop:GroupPreorder}, it follows from $\Pi_i=\ker(f_i)$
that each $\Pi_i$ is a {\em saturated} subgroup of $\Pi_{i-1}$, i.e.,
$\Pi_{i-1}/\Pi_i$ is a free abelian group. The nonzero homomorphism
$f_i:\Pi_{i-1} \to \RR$ with kernel $\Pi_i$ can be thought of as an
injective homomorphism $\Pi_{i-1}/\Pi_i \to \RR$, and the group on the
left is isomorphic to $\ZZ^m$ for some $m \geq 1$. Thus the set of choices for
$f_i$ is in bijection with the set of vectors $x$ on the unit sphere in
$\RR^m$ that, moreover, have the property that $x$ does not lie on any
hyperplane through the origin in $\RR^m$ defined by a linear equation
with {\em rational} coefficients.  Note that this set is admissible
in the sense of \S\ref{sec:Introduction}. (In what follows, however,
we will not scale numerical data so as to lie on a sphere.)
\end{re}

We record the following lemma for later use.

\begin{lm} \label{lm:UpSet}
Let $\leq$ be a preorder on the finitely generated abelian group $\Pi$
and let $k \in \ZZ_{\geq 0}$, $\Pi_1,\ldots,\Pi_k$, and $f_1,\ldots,f_k$ be
as in Proposition~\ref{prop:GroupPreorder}. Let $U$ with $\emptyset
\subsetneq U \subsetneq \Pi$ be an up-set (respectively, a down-set) in
$(\Pi,\leq)$. Then $k \geq 1$ and the set $f_1(U)$ is bounded from below
(respectively, from above). Furthermore, for $c:=\inf f_1(U)$
(respectively, $c:=\sup f_1(U)$) we have
\[ U \supseteq \{u \in \Pi \mid c<f_1(u)\}\: \text{ (respectively, }
U \supseteq \{u \in \Pi \mid f_1(u)<c\}). \]
\end{lm}

\begin{proof}
If $k=0$, then there are no up-sets strictly between $\emptyset$
and $\Pi$, so $k \geq 1$. Now if $u \in U$ and $v \in \Pi$ satisfy
$f_1(u)<f_1(v)$, then \eqref{eq:Ineq} implies that $u<v$ and hence,
since $U$ is an up-set, $v \in U$. Together with $U \neq \Pi$ this
implies that $f_1(U)$ is bounded from below, and it also implies the
last statement. 
\end{proof}

\begin{prop} \label{prop:Zn}
Let $\Pi$ be a finitely generated abelian group of rank $n$. Then the
spectral space $\cP(\Pi)$ has Krull dimension $n$, and the closed
points in $\cP(\Pi)$ can be identified with the orders on $\Pi/\Pi'$, where $\Pi'$
is the torsion subgroup. Furthermore, the following three properties
are equivalent:
\begin{enumerate}
\item $n \leq 1$;
\item $\cP(\Pi)$ is Noetherian; and 
\item $\cP(\Pi)$ is pure-dimensional.
\end{enumerate}
\end{prop}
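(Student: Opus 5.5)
By Proposition~\ref{prop:Torsion}, applied to $S=\Pi$ with $\Pi'$ the torsion subgroup, $\cP(\Pi)$ is homeomorphic to $\cP(\Pi/\Pi')$. I would therefore first reduce to $\Pi=\ZZ^n$, which is free of rank $n$. Every preorder is then encoded, via Proposition~\ref{prop:GroupPreorder}, by a chain $\Pi=\Pi_0\supsetneq\Pi_1\supsetneq\cdots\supsetneq\Pi_k$ of saturated subgroups together with homomorphisms $f_i$, unique up to positive scaling.

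The key step is to describe specialization in terms of these data. By the definition of the topology, ${\leq'}\in\overline{\{\leq\}}$ holds if and only if $u<v\Rightarrow u<'v$ for all $u,v$. Taking contrapositives, this gives $u\leq' v\Rightarrow u\leq v$, so ${\leq'}\subseteq{\leq}$. If $\leq$ is nontrivial, then $f_1$ is monotone for $\leq'$ and hence also nonzero-monotone in the sense of the uniqueness part of Proposition~\ref{prop:GroupPreorder}. That uniqueness then forces $f'_1=f_1$ up to positive scaling, and so $\Pi'_1=\Pi_1$. Restricting both preorders to $\Pi_1$ and inducting on rank, I expect to get the following claim.
\begin{quote}
$\leq'$ specializes $\leq$ if and only if the data of $\leq$ form an initial segment of the data of $\leq'$.
\end{quote}
For the converse direction I would use \eqref{eq:Ineq} directly. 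I expect this inductive bookkeeping to be the main (though modest) obstacle.

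Given this description, the dimension and closed-point statements follow. The ranks of the $\Pi_i$ strictly decrease, so chains of specializations have length at most $n$, giving Krull dimension at most $n$. A lexicographic preorder, with each $f_i$ a coordinate function so that each rank drops by one, gives a chain of length exactly $n$. A point is closed precisely when its data cannot be extended, i.e.\ when $\Pi_k=0$, which means $\leq$ is an order.

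For the equivalences, first take $n=0$. Then $\cP(\Pi)$ is a point, so it is both Noetherian and pure-dimensional. For $n=1$, $\cP(\ZZ)$ has exactly three points: the trivial preorder and the two orders. It is therefore Noetherian, and every maximal chain has length $1$, so it is pure-dimensional.

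Now take $n\geq 2$.
\begin{itemize}
\item[(a)] To see that $\cP(\Pi)$ is not pure-dimensional, take $k=1$ with $f_1$ injective, e.g.\ with $\QQ$-linearly independent irrational values on a basis. This is a closed order, and its maximal chain trivial $\to\leq$ has length $1<n$.
\item[(b)] To see that $\cP(\Pi)$ is not Noetherian, fix $u\neq 0$ and consider the closed set $\{u<0\}$. It contains the points with $k=1$ and $f_1(u)<0$. Each such point is minimal with respect to generalization inside this set, since its only proper generalization is the trivial preorder, which does not lie in $\{u<0\}$. Hence distinct such points, i.e.\ those with non-proportional $f_1$, have closures that are distinct irreducible components of $\{u<0\}$.
\item[(c)] Since $n\geq2$, there are infinitely many pairwise non-proportional rational $f_1$ with $f_1(u)<0$. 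So $\{u<0\}$ has infinitely many irreducible components, which is impossible in a Noetherian space.
\end{itemize}
This proves that each of (2) and (3) implies (1), and completes the proof.
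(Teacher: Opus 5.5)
Your proof is correct. For the Krull dimension, the closed points and the failure of pure-dimensionality you follow the paper's route: specialization amounts to extending the flag data of Proposition~\ref{prop:GroupPreorder}. Your explicit lemma (${\leq'}\in\overline{\{\leq\}}$ iff ${\leq'}\subseteq{\leq}$ iff the data of $\leq$ is an initial segment of the data of $\leq'$) makes precise what the paper states tersely. Note that it rests on the fact, established in the \emph{proof} rather than the statement of Proposition~\ref{prop:GroupPreorder}, that $f_1$ is, up to positive scaling, the only nonzero homomorphism monotone for the preorder. Your formulation also correctly allows an intermediate member of a non-saturated chain to be given by a longer initial segment, which the paper's wording glosses over.

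The genuine difference is in non-Noetherianity. The paper takes $n=2$ and exhibits the strictly decreasing chain $P\cap\{(0,a)<(1,0)\}$, $a=1,2,\ldots$, of closed subsets of the subspace $P$ of orders given by a single weight vector $(x,y)\in\RR_{>0}^2$ with $\QQ$-linearly independent entries. It uses only that $(0,a)<(1,0)$ iff $y<x/a$, so it is elementary and independent of the classification of specializations. You instead show that the points with $k=1$ and $f_1(u)<0$ are generic points of pairwise distinct irreducible components of the closed set $\{u<0\}$, which is impossible in a Noetherian space. Your argument depends on your specialization lemma, but it is uniform in $n\geq 2$, needs no coordinate computation, and exhibits a more structural failure: a closed set with uncountably many irreducible components.
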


\begin{proof}
By Proposition~\ref{prop:Torsion}, we have $\cP(\Pi)=\cP(\Pi/\Pi')$
(up to a canonical homeomorphism), so we may assume that $\Pi$ has no torsion. 

For $n=0$, $\Pi$ is trivial, and $\cP(\Pi)$ consists of a single
point corresponding to the trivial preorder. For $n=1$, we may assume  
$\Pi=\ZZ$, and $\cP(\Pi)$ consists of three points: the trivial preorder,
the order in which $0<1$, and the order in which $0>1$. The latter two
are closed points, while the former is an open point. In each of these cases,
Noetherianity and the fact that every maximal length of irreducible
closed subsets has length $n$ are immediate.

For $n=2$, we may assume that $\Pi=\ZZ^2$. Consider the subset
$P \subseteq \cP(\ZZ^2)$ consisting of all the preorders given
by~\eqref{eq:Ineq} with $k=1$, i.e., by $(a,b) \leq (c,d) \Leftrightarrow
xa+yb \leq xc+yd$, where $x,y$ are chosen in $\RR_{>0}$ and linearly
independent over $\QQ$. Note that then, for $a \in \ZZ_{>0}$, we have
$(0,a)<(1,0)$ if and only if $y<\frac{x}{a}$. Since this sector in the
positive quadrant keeps shrinking with increasing $a$, we obtain an
infinite strictly decreasing chain of closed subsets of $P$:
\[ (P \cap \{(0,1)<(1,0)\}) \supsetneq (P \cap \{(0,2)<(1,0)\})
\supsetneq (P \cap \{(0,3)<(1,0)\}) \supsetneq \dots 
\] Hence $\cP(\Pi)$
is not Noetherian. The same argument can be used to show that $\Pi$ is not Noetherian for any $n \geq 3$.

To establish the remaining statements for any $n$, 
we may assume that $\Pi=\ZZ^n$. Consider any
chain $\Pi=\Pi_0 \supsetneq \Pi_1 \supsetneq \cdots \supsetneq
\Pi_n=\{0\}$ of saturated subgroups and nonzero homomorphisms $f_i:\Pi_{i-1} \to (\RR,+)$ with
$\ker(f_i)=\Pi_i$. For $k=0,\ldots,n$, define $\leq_k$ by~\eqref{eq:Ineq},
using only $\Pi_1,\ldots,\Pi_k$ and $f_1,\ldots,f_{k}$. Then $u
<_i v \Rightarrow u <_{i+1} v$ for each $i\in\{0,\dots,n-1\}$ and all $u,v\in\Pi$, so
\[ 
\overline{\{\leq_0\}} \supsetneq \overline{\{\leq_1\}} \supsetneq
\cdots \supsetneq \overline{\{\leq_n\}}. 
\]
Hence the Krull dimension of $\cP(\Pi)$ is at least $n$.
Conversely, for any decreasing chain
\[ \overline{\{\leq_0\}} \supsetneq \overline{\{\leq_1\}} \supsetneq
\cdots \supsetneq \overline{\{\leq_k\}} \]
we have $u <_i v \Rightarrow u <_{i+1} v$ for each $i\in\{0,\dots,n-1\}$ and all $u,v\in\Pi$, 
and it follows that there exist $\Pi_1,\ldots,\Pi_k$ and $f_1,\ldots,f_k$ as in
Proposition~\ref{prop:GroupPreorder} such that each $\leq_l$ is defined
by \eqref{eq:Ineq} with the data consisting of $\Pi_1,\ldots,\Pi_l$
and $f_1,\ldots,f_l$. In particular, $k$ is at most $n$. Hence the
Krull dimension of $\cP(\Pi)$ is precisely $n$, and the maximal chains
of irreducible closed subsets (equivalently, closures of single preorders)
correspond bijectively to chains $\Pi=\Pi_0
\subsetneq \cdots \subsetneq \Pi_k = \{0\}$ of saturated subgroups and
homomorphisms (up to positive scalars) $f_i:\Pi_{i-1} \to (\RR,+)$ with $\ker(f_i)=\Pi_i$. This
implies the remaining statements. 
\end{proof}

\begin{re}
Preorders on $\ZZ^n$ correspond to prime $\ell$-ideals of the free $n$-generated abelian lattice-ordered group. Moreover, under Baker-Beynon duality, the latter is identified with the lattice-ordered group of piecewise homogeneous linear functions with integer coefficients on $\RR^n$~\cite{Bey77}, and the proper prime $\ell$-ideals admit a geometric description in terms of rays in $\RR^n$; see~\cite{Pan99}.
\end{re}

%%%%%%%%%%%%%%%%%%%%%%%%%%%%%%%%%%%%%%%%%%%

\subsection{Preorders on $\Pi$-sets with $\Pi$ a finitely generated
abelian group} 

We will also need a version of Proposition~\ref{prop:GroupPreorder}
for $\Pi$-sets. The inductive step is the following proposition.

\begin{prop} \label{prop:GroupSetPreorder}
Let $\Pi$ be a finitely generated abelian group, $S$ a finitely generated
$\Pi$-set, and $\leq$ a preorder on $S$. Assume that the associated
equivalence relation $\approx$ on $S$ has more than one equivalence
class. Then there exist a group homomorphism $f:\Pi \to \RR$ and a
non-constant function $g:S \to \RR$ such that for all $u \in \Pi$ and
$s,t \in S$ we have $g(us)=f(u)+g(s)$ and $s \leq t \Rightarrow g(s) \leq
g(t)$. Furthermore, $f$ is unique up to scaling by a positive real number.
\end{prop}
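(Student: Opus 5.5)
The plan is to reduce to a free $\Pi$-set, show that all orbits which carry a nontrivial preorder share a common first homomorphism, and then define $g$ by a supremum formula, which makes monotonicity and equivariance automatic.

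\emph{Reduction.} Choose generators $s_1,\ldots,s_k$ of $S$. The map $\Pi\times[k]\to S$, $(u,j)\mapsto us_j$, is surjective and equivariant, so by Proposition~\ref{prop:OpenEmbedding} we pull $\leq$ back to the free $\Pi$-set $\Pi\times[k]$. A pair $(f,g')$ for the pull-back descends to $S$ provided $g'$ is constant on fibres. This holds because points with the same image are $\approx$-equivalent, and $g'$ is monotone. So we may assume $S=\Pi\times[k]$.

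\emph{Comparing orbits.} For each $j$, the rule $u\leq_j v \defiff (u,j)\leq (v,j)$ is a preorder on the group $\Pi$. When it is nontrivial, Proposition~\ref{prop:GroupPreorder} gives its first homomorphism $f^{(j)}$, unique up to positive scaling. For orbits $i,j$ set
\[ D_{ij}:=\{w\in\Pi \mid (w,i)\leq(1,j)\}. \]
By equivariance, $(u,i)\leq(v,j)$ holds iff $uv^{-1}\in D_{ij}$, so $D_{ij}$ encodes all cross-comparisons. Moreover $D_{ij}$ is a down-set both for $\leq_i$ and for $\leq_j$, since $x\leq_j 1$ implies $(wx,i)\leq(x,j)\leq(1,j)$ whenever $w\in D_{ij}$. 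Suppose $D_{ij}$ is a proper nonempty subset and both $\leq_i$ and $\leq_j$ are nontrivial. Lemma~\ref{lm:UpSet}, applied to $D_{ij}$ and to its complement (an up-set), squeezes $D_{ij}$ between $\{f^{(i)}<c\}$ and $\{f^{(i)}\leq c\}$, and likewise for $f^{(j)}$ with some constant $c'$. Two such half-space-type sets can only nest in this way if $f^{(i)}$ and $f^{(j)}$ are positively proportional. If $D_{ij}$ is empty or all of $\Pi$, then the whole orbit $j$ lies strictly below, respectively weakly above, the whole orbit $i$. Combining these two facts, the orbits split into linearly ordered ``blocks'' of mutually interleaved orbits, and within a block all nontrivial $f^{(j)}$ agree up to positive scalars. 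Transitivity of $\leq$ handles chains of interleavings, and I expect this bookkeeping of which orbits are linked to be the fiddliest step.

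\emph{Constructing $f$ and $g$.} Since $\approx$ has at least two classes, either there are at least two blocks, or some block contains an orbit with nontrivial $\leq_j$.
\begin{itemize}
\item In the second case, take $f$ to be the common first homomorphism of that block and fix a base orbit $i_0$ in it. For $t$ in the block, define
\[ g(t):=\sup\{f(u) \mid (u,i_0)\leq t\}. \]
This is finite by Lemma~\ref{lm:UpSet} and the interleaving property. It is monotone, because the set $\{u \mid (u,i_0)\leq t\}$ grows with $t$. It is equivariant, because $\{u \mid (u,i_0)\leq vt\}=v\cdot\{u \mid (u,i_0)\leq t\}$, which gives $g(vt)=f(v)+g(t)$. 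It is nonconstant, since $g(u,i_0)=f(u)$ up to an additive constant and $f\neq 0$.
\item On other blocks, set $g=\pm\infty$-like offsets, realised concretely as large constants. This is only compatible with equivariance when those blocks are internally trivial for $f$. If that fails, I would instead choose $f$ from the lowest nontrivial block and argue that orbits in blocks strictly above or below are forced to be consistent with it; this is a point to check carefully.
\item If every orbit is internally trivial but there are several blocks, take $f=0$ and let $g$ be the block index.
\end{itemize}

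\emph{Uniqueness.} Any pair $(f,g)$ as in the statement has $f(u)\leq f(v)$ whenever $u\leq_j v$, since $g$ is monotone and $g((v,j))-g((u,j))=f(vu^{-1})$. By the uniqueness part of Proposition~\ref{prop:GroupPreorder}, this pins $f$ down to a nonnegative multiple of $f^{(j)}$ on any orbit with nontrivial $\leq_j$. The main obstacle I foresee is reconciling the statement's uniqueness claim with the degenerate situation of several separated blocks, where $f=0$ is also admissible. I would read the claim as uniqueness up to scaling by a nonnegative constant, or restrict it to the case that forces $f\neq 0$.
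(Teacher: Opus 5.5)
\textbf{The gap is in your existence case split.} You give priority to ``some block contains an orbit with nontrivial $\leq_j$''. You then try to patch the other blocks with large constants, or with $f$ taken from the lowest nontrivial block. Neither can work.

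Once $f\neq 0$, the rule $g(us)=f(u)+g(s)$ makes $g$ unbounded above and below on \emph{every} orbit. So no orbit can receive a constant, and an internally trivial orbit even forces $f=0$. Moreover, if orbit $i$ lies in a strictly lower block than orbit $j$, then $s<t$ for all $s$ in orbit $i$ and $t$ in orbit $j$. Monotonicity would then require $g(s)\leq g(t)$ for all such pairs, which is impossible. Hence with two or more blocks no $f\neq 0$ exists at all.

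The priorities must be reversed, as in the paper. Let the blocks be the classes of the preorder $i\preceq j$ iff some element of orbit $i$ is $\leq$ some element of orbit $j$.
\begin{itemize}
\item If there are at least two blocks, take $f=0$ and let $g$ be the block index. This works whatever happens inside the orbits.
\item If there is a single block, every $D_{ij}$ is nonempty and $D_{ij}D_{jk}\subseteq D_{ik}$. If one $D_{ij}$ equalled $\Pi$, then $D_{kl}\supseteq D_{ki}D_{ij}D_{jl}=\Pi$ for all $k,l$, and $\approx$ would have only one class. Hence all $D_{ij}$ are proper and nonempty, and every $\leq_j$ is nontrivial. Your formula $g(t)=\sup\{f(u)\mid (u,i_0)\leq t\}$ with $f:=f^{(i_0)}$ is then finite on all of $S$: for $t=(v,j)$ the set in question is $D_{i_0j}v$, a proper nonempty $\leq_{i_0}$-down-set, so Lemma~\ref{lm:UpSet} applies. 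Your monotonicity and equivariance arguments finish the job.
\end{itemize}
This supremum construction is a valid and leaner alternative to the paper's route, which goes through $d_{ij}=\inf f(U_{ij})$ and Lemma~\ref{lm:dijci}. It only uses $f^{(i_0)}$, so the cross-orbit proportionality step you found fiddly is not needed for existence.

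\textbf{On uniqueness, your doubt is unfounded.} The statement holds exactly as written, and the same observations prove it.
\begin{itemize}
\item With at least two blocks, every admissible $f$ is $0$, by the unboundedness argument above.
\item With one block, $f=0$ is impossible. Otherwise $g$ would be constant, say $c_i$, on orbit $i$, and $i\preceq j\preceq i$ would force $c_i\leq c_j\leq c_i$ for all $i,j$, making $g$ constant.
\end{itemize}
In the one-block case, your observation that $u\leq_{i_0} v$ implies $f(u)\leq f(v)$, combined with the uniqueness part of Proposition~\ref{prop:GroupPreorder}, then shows that every admissible $f$ is a positive multiple of $f^{(i_0)}$. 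This gives uniqueness up to positive scaling, more directly than the paper's determinant argument.
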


\begin{proof}
Write $S=S_1 \sqcup \cdots \sqcup S_{m}$, where the $S_i$ are the
$\Pi$-orbits on $S$. Then the following binary relation $\preceq$ on
$[m]$ is a preorder: 
\[ i \preceq j \defiff \exists r \in S_i, s \in S_j: r \leq s.
\]
Now $\preceq$ defines an equivalence relation $\equiv$ and an
order on equivalence classes for $\equiv$. If there are $a>1$ of
these, then there
is a unique order-preserving bijection $[m]/{\equiv} \to [a]$, and we can take $g$ to be the map $S \to
[a]$ that maps $s$ to the image in $[a]$ of the $\equiv$-class of the unique 
$i \in [m]$ with $s \in S_i$, and set $f:=0$.

Let us assume therefore that $\preceq$ has a single equivalence class. We choose
a representative $s_i$ in each $S_i$, and obtain a preorder $\leq_i$
on $\Pi$ by setting
\[
u \leq_i v\defiff us_i \leq vs_i.
\] 
For $i,j \in \{1,\ldots,m\}$, we obtain the nonempty up-set in $\Pi$ relative to $\leq_j$
\[ 
U_{ij}:=\{u \in \Pi \mid s_i \leq u s_j\}.
\]
It is easy to see that the up-sets $U_{ij}$ satisfy, for all $i,j,k
\in \{1,\ldots,m\}$, 
\[ U_{ii}=\{u \in \Pi \mid 1 \leq_i u\} \quad \text{ and } \quad 
U_{ij} U_{jk} \subseteq U_{ik}. \]
This implies that if some $U_{ij}$ equals $\Pi$, then all of the
$U_{ij}$
equal $\Pi$, and $\leq$ has a single equivalence class, a contradiction.
Hence the $U_{ij}$ are proper up-sets of $\Pi$. In particular, each
preorder $\leq_i$ is nontrivial and we can choose  $f^{(i)}:\Pi \to (\RR,+)$ to be the
first homomorphism from the data in Proposition~\ref{prop:GroupPreorder}
for $\leq_i$.

Fix any two $i,j \in [m]$. Then $d:=\inf\{f^{(j)}(u)
\mid u \in U_{ij}\}$ exists in $\RR$ by Lemma~\ref{lm:UpSet}. Furthermore, $U_{ij}^{-1}$ is a
down-set relative to $\leq_i$, and Lemma~\ref{lm:UpSet} implies that
also $c:=\sup\{f^{(i)}(u) \mid u \in U_{ij}^{-1}\} = -\inf\{f^{(i)}(u) \mid u \in U_{ij} \}$ exists in $\RR$. 
Hence if $u \in \Pi$ satisfies $f^{(j)}(u)>d$, then $u \in
U_{ij}$ by Lemma~\ref{lm:UpSet} and so $f^{(i)}(u) \geq -c$. This
implies that $f^{(j)}$ and $f^{(i)}$ are positive scalar multiples of each
other (a version of the argument for this is the uniqueness proof of $f$
below). So after scaling, we may assume that all $f^{(i)}$ are equal,
and call this homomorphism $f:\Pi \to (\RR,+)$.

Consider the real numbers 
\[ d_{ij}:=\inf\{f(u) \mid u \in U_{ij} \} \in \RR, \ i,j \in [m]. \]
The properties of the $U_{ij}$ translate into $d_{ii}=0$ and
$d_{ij}+d_{jk}\geq d_{ik}$ for all $i,j,k\in [m]$.  Lemma~\ref{lm:dijci}
below implies that there exist $c_i \in \RR$ such that $d_{ij} \geq c_i -
c_j$ for all $i,j \in [m]$.

We
define $g:S \to \RR$ by $g(u s_i)=f(u)+c_i$. This is well-defined since
the stabiliser of $s_i$ is contained in $\ker(f)$: indeed, if $u s_i =
s_i$, then clearly $u \leq_i 1$ and $u^{-1} \leq_i 1$, and this yields
$f(u) \leq 0$ and $-f(u) \leq 0$. We now have
\begin{align*} 
us_i \leq vs_j &\Rightarrow s_i \leq u^{-1}v s_j \Rightarrow u^{-1}v
\in U_{ij} \Rightarrow d_{ij} \leq f(u^{-1}v)=-f(u)+f(v)\\ &\Rightarrow 
c_i+f(u) \leq c_j+f(v) \Leftrightarrow g(us_i) \leq g(vs_j), 
\end{align*}
as required. 

Finally, let us consider the uniqueness of $f$. It is easy to see that 
$f=0$ if and only if $\preceq$ has more than one equivalence class. 
Suppose then that $\preceq$ has only one class, and $(f,g)$ and $(f',g')$ 
are solutions such that $f,f'\neq 0$ are not positive scalar multiples of 
one another. If $f'$ is a negative scalar multiple of $f$, then there exists a $u \in \Pi$
with $f(u)>0>f'(u)$. Otherwise, there exist $v,v' \in \Pi$ such that the determinant 
of the following matrix is nonzero:
\[ A:=\begin{bmatrix} f(v) & f(v') \\ f'(v) & f'(v') \end{bmatrix}.\]
Pick any $s \in S$. In the first case, we find
that $g(us)=f(u)+g(s)>g(s)$ so that $s < us$, while also
$g'(us)=f'(u)+g'(s)<g'(s)$ so that $us < s$, a contradiction.

In the second case, there exists a vector $x:=(a,b)^T \in \ZZ^2$ such
that $Ax \in \RR_{>0} \times \RR_{<0}$. Then $u:=v^a (v')^b$ has the property that $f(u)>0>f'(u)$,
and we are back in the first case. 
\end{proof}

\begin{re}
The function $g$ given by Proposition~\ref{prop:GroupSetPreorder} is uniquely determined
by $f$ and its values on a system of $\Pi$-orbit representatives in
$S$. The homomorphism $f$ determines a preorder on each $\Pi$-orbit and
$g$ can be thought of as an {\em alignment} of these individual preorders
to a preorder on the whole set $S$.
\end{re}

\begin{lm} \label{lm:dijci}
Let $m$ be a nonnegative integer and let the real numbers $d_{ij}  \in
\RR$ for $i,j \in [m]$ satisfy $d_{ii}=0$ and $d_{ij}+d_{jk} \geq d_{ik}$
for all $i,j,k \in [m]$. Then there exists a vector $c \in \RR^m$
such that $d_{ij} \geq c_i-c_j$ for all $i,j \in [m]$. Let $C$ be the
polyhedron in $\RR^m$ consisting of all such vectors.
The relation on $[m]$ defined by 
\[ i \sim j :\Leftrightarrow d_{ij}=-d_{ji} \]
is an equivalence relation, and $\dim(C)$ equals its number of equivalence
classes.
\end{lm}

\begin{proof} 
The lemma is trivial when $m=0$, so we may assume $m \geq 1$. For the
first statement, fix any $l \in [m]$ and set $c_i:=d_{il}$ for all
$i$. We then have
\[ c_i - c_j = d_{il} - d_{jl} \leq d_{ij}  \]
for all $i,j \in [m]$. Hence the point $c=:c^{(l)}$ lies in $C$.

For the second statement, note that $\sim$ is clearly reflexive and
symmetric. For transitivity, assume that $i \sim j$ and $j \sim k$.
Now $d_{ik} \geq - d_{ki}$ follows from $d_{ik} + d_{ki} \geq d_{ii}=0$,
and we further have
\[ d_{ik} \leq d_{ij} + d_{jk} = - (d_{ji} + d_{kj}) \leq -d_{kj}. \]
So $\sim$ is an equivalence relation, as desired. 

If $i \sim l$, then the inequalities $c_i-c_l \leq d_{il}$ and
$c_l-c_i \leq d_{li}=-d_{il}$ show that $c_l$ is uniquely determined
by $c_i$. This implies that $\dim(C)$ is at most the number of equivalence
classes of $\sim$. To show that it is also at least that number,
consider $i,l$ with $i \not\sim l$, so that $d_{li}>-d_{il}$. 
Constructing the point $c=c^{(l)} \in C$ as in the first paragraph, we find
that 
\[ c_l - c_i = d_{ll} - d_{il} = 0 - d_{il} < d_{li}. \]
This means that only those inequalities $c_l - c_i \leq d_{li}$ for
which $i \sim l$ holds are forced to be equalities. This proves the lemma.
\end{proof}

\begin{re}
After modding out the line spanned by the all-one vector, the polytope $C$
from Lemma~\ref{lm:dijci} is an {\em alcoved polytope} \cite{Lam07}. The
lemma is well known in this context, but we did not find a precise
reference.
\end{re}

\begin{prop} \label{prop:ChoicesForG}
In the notation of Proposition~\ref{prop:GroupSetPreorder}, assume
that $f$ is nonzero. If $\im(f)$ is dense, then the alignment $g$
is unique up to adding a constant. Otherwise, let $s_1,\ldots,s_k$ be
representatives of the orbits of $\Pi$ on $S$.  Then the relation $\sim$
on $[k]$ defined by
\[ i \sim j \defiff \min\{f(u) \mid u\in\Pi, s_i \leq u
s_j\}=\max\{f(v) \mid v\in\Pi,  vs_j \leq s_i\} \]
is an equivalence relation, and the set of choices for $g$ is a polyhedron
whose dimension equals the number of equivalence classes of $\sim$.
\end{prop}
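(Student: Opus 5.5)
Throughout, I fix the data of the proof of Proposition~\ref{prop:GroupSetPreorder}: orbit representatives $s_1,\ldots,s_k$, the common homomorphism $f\neq 0$, and the up-sets $U_{ij}=\{u \mid s_i \leq us_j\}$. Since $f\neq 0$, the relation $\preceq$ on orbits has a single class, by the uniqueness discussion at the end of that proof. The plan is to show that the set of alignments $g$ is exactly the set determined by vectors $c$ in the polyhedron $C$ of Lemma~\ref{lm:dijci} for $d_{ij}=\inf f(U_{ij})$. Once that is done, the two cases follow by analysing when $d_{ij}=-d_{ji}$.

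\textbf{Step 1: reduce to $C$.} Any admissible $g$ has the form $g(us_i)=f(u)+c_i$ with $c_i=g(s_i)$, so $g$ corresponds to a vector $c\in\RR^k$. I first claim the condition $s\leq t\Rightarrow g(s)\leq g(t)$ is equivalent to $d_{ij}\geq c_i-c_j$ for all $i,j$.
\begin{itemize}
\item Sufficiency is the computation already done in the proof of Proposition~\ref{prop:GroupSetPreorder}.
\item For necessity, $u\in U_{ij}$ means $s_i\leq us_j$. This gives $c_i\leq f(u)+c_j$, and taking the infimum over $u\in U_{ij}$ yields $c_i-c_j\leq d_{ij}$.
\end{itemize}
So the set of choices of $g$ is, up to the linear bijection $g\leftrightarrow c$, the polyhedron $C$. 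Lemma~\ref{lm:dijci} then gives an equivalence relation $i\sim' j \Leftrightarrow d_{ij}=-d_{ji}$ with $\dim C$ equal to its number of classes.

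\textbf{Step 2: identify $\sim'$ with $\sim$ in the discrete case.} Suppose $\im f$ is not dense. Then $\im f$ is a discrete subgroup of $\RR$, hence cyclic, and every nonempty subset of $\im f$ that is bounded below attains its minimum. By Lemma~\ref{lm:UpSet}, $f(U_{ij})$ is bounded below, so $d_{ij}=\min\{f(u)\mid s_i\leq us_j\}$.

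For the other side, I rewrite $d_{ji}$. We have $s_j\leq ws_i$ iff $w^{-1}s_j\leq s_i$, so putting $v=w^{-1}$,
\[ -d_{ji}=\sup\{f(v)\mid vs_j\leq s_i\}. \]
This supremum is also attained, by the same discreteness argument applied to the down-set. Hence $d_{ij}=-d_{ji}$ is exactly the condition defining $\sim$ in the statement. Steps 1 and 2 together give the polyhedral statement and the dimension count.

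\textbf{Step 3: the dense case.} Suppose $\im f$ is dense. I want to show $d_{ij}=-d_{ji}$ for all $i,j$. Then $\sim'$ has a single class, $\dim C=1$, and $C$ is the line of translates $c+\RR(1,\ldots,1)$, so $g$ is unique up to an additive constant.

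We already know $d_{ij}+d_{ji}\geq d_{ii}=0$. Suppose for contradiction that $d_{ij}+d_{ji}>0$. Then there is a nonempty open interval $I$ strictly between $-d_{ji}$ and $d_{ij}$, and by density we can pick $u\in\Pi$ with $f(u)\in I$. By totality, $s_i\leq us_j$ or $us_j\leq s_i$.
\begin{itemize}
\item In the first case $u\in U_{ij}$, so $f(u)\geq d_{ij}$.
\item In the second case $u^{-1}\in U_{ji}$, so $f(u)\leq -d_{ji}$.
\end{itemize}
Both contradict $f(u)\in I$. The same totality argument applies when $\im f$ is not dense: for $i\not\sim j$ the interval $(-d_{ji},d_{ij})$ then contains no value of $f$, which is consistent with Step 2.

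\textbf{Main obstacle.} The delicate point is Step 2: showing that the infimum and supremum are attained, so that they match the $\min$ and $\max$ in the statement. This relies on discreteness of $\im f$ together with the bounds from Lemma~\ref{lm:UpSet}. A smaller check is that $C$ is nonempty, which Lemma~\ref{lm:dijci} guarantees.
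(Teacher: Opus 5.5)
Your proposal is correct and follows the paper's proof in the non-dense case essentially verbatim: you reduce to the inequalities $c_i-c_j\le d_{ij}$, rewrite $-d_{ji}$ as a maximum over $\{v \mid vs_j\le s_i\}$, and apply Lemma~\ref{lm:dijci}, and you also make explicit why the infima are attained, which the paper leaves implicit. In the dense case the paper squeezes $g(s)$ directly between $\sup B$ and $\inf A$ using $A\cup B=\im(f)$, whereas you use the same density-plus-totality gap argument to get $d_{ij}=-d_{ji}$ for all $i,j$ and then read off uniqueness up to a constant from Lemma~\ref{lm:dijci}; this is a repackaging of the same idea rather than a different route.
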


\begin{proof}
Assume first that $\im(f)$ is dense. We claim that if we require, say, that $g(s_1)=0$, 
then $g$ is uniquely determined. Consider any $s\in S$. For any $u \in
\Pi$ with $s \leq us_1$, we have $g(s) \leq f(u)+0$, so $g(s) \leq \inf(A)$
where $A=\{f(u) \mid u\in\Pi, s \leq us_1\}$. Similarly, we have $g(s)
\geq \sup(B)$, where $B=\{f(u) \mid u\in\Pi, s \geq us_1\}$. Since, by assumption, $A \cup
B=\im(f)$ is dense, $\sup(B)=g(s)=\inf(A)$.

Now assume that $\im(f)$ is not dense. Then $g$ is still uniquely determined by
($f$ and) the values $c_i:=g(s_i) \in \RR$ via $g(us_i)=f(u)+c_i$. If
$s_i \leq us_j$, then we have $c_i \leq f(u)+c_j$. Hence the $c_i$
satisfy the inequalities
\[ c_i-c_j \leq \min\{f(u) \mid u\in\Pi, s_i \leq u s_j\}=:d_{ij}, \]
and, conversely, any solution $d$ to this system of inequalities gives
rise to a valid $g$.  The real numbers $d_{ij}$ satisfy $d_{il} \leq
d_{ij}+d_{jl}$ for all $i,j,l\in[k]$.  Note that
\begin{align*} 
-d_{ji}=-\min\{f(u) \mid u\in\Pi, s_j \leq us_i\} 
&=
\max\{-f(u) \mid u\in\Pi, u^{-1} s_j \leq s_i\}\\
&=
\max\{f(v) \mid v\in\Pi, vs_j \leq s_i\}, 
\end{align*}
so that the pairs with $d_{ij}=-d_{ji}$ are precisely those with $i
\sim j$. Now the result follows from Lemma~\ref{lm:dijci}.
\end{proof}

\begin{re}
The condition on $f$ from Proposition~\ref{prop:ChoicesForG} that $\im(f)$
is dense is equivalent to the condition that $\Pi/\ker(f)$ has rank at
least two. On the other hand, if $\im(f)=\ZZ$, then one could always take
$g$ to be $\ZZ$-valued, as well: the {\em vertices} of the alcove in the proof
above are integral. However, as the following example shows, this is
not the only choice for $g$, and indeed below, in the setting of
minimal preotrees, we choose $g$ in the {\em relative interior} of this alcove.
\end{re}

\begin{figure}
\begin{center}
\includegraphics[width=\textwidth]{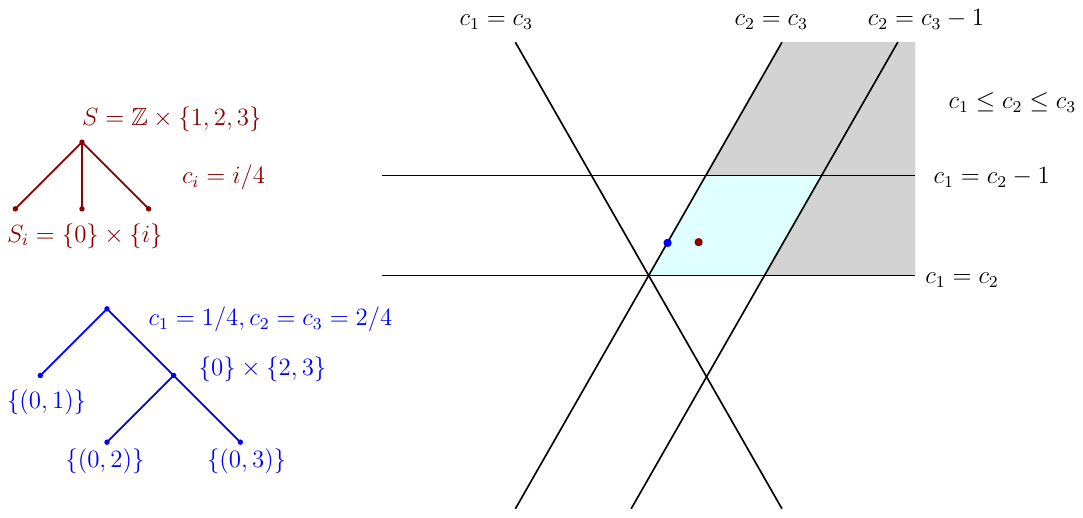}
\caption{On the right, the alcove from Example~\ref{ex:Z3} is depicted
in blue in the plane $\RR^3 / \RR(1,1,1)$. The Weyl chamber determined
by $c_1 \leq c_2 \leq c_3$ contains this alcove, and the remainder of
the chamber is depicted in gray. The red and blue points correspond to
two different choices of $g_r$ at the root $r$ of a preotree for the
preorder from Example~\ref{ex:Z3} (see also Example~\ref{ex:Minimal}). 
These preotrees are depicted on the
left.} \label{fig:Alcove}
\end{center}
\end{figure}

\begin{ex} \label{ex:Z3}
Let $\Pi:=(\ZZ,+)$ act on $\ZZ \times \{1,2,3\}$ by $u(v,i):=(u+v,i)$.
Consider the lexicographic order determined by $(a,i) \lex
(b,j)$ if either $a<b$ or  $a=b$ and $i \leq j$. In this case, $f$ from
Proposition~\ref{prop:GroupSetPreorder} equals $f(u)=u$ (up to a positive
scalar), and $g$ is determined by $g(0,i)=:c_i$. These numbers must
satisfy $c_i - c_j \leq 0$ if $i<j$ and $c_i-c_j \leq 1$ if $i>j$. See
Figure~\ref{fig:Alcove} on the right for this alcove worth of
possibilities for $g$. 
\end{ex}

A complete description of preorders on the $\Pi$-set $S$ involves the
following notion.

\begin{de}
Let $\Pi$ be a finitely generated abelian group and let $S$ be a finitely
generated $\Pi$-set. Then a {\em preotree} $\tau$ for $(\Pi,S)$ consists
of a finite, rooted tree in which each vertex $x$ is labelled by a pair
$(\Pi_x,S_x)$ consisting of a saturated subgroup $\Pi_x$ of $\Pi$ and
a $\Pi_x$-stable subset $S_x$ of $S$, such that the following conditions
are satisfied:
\begin{enumerate}
\item for the root $r$, we have $\Pi_r=\Pi$ and $S_r=S$; 

\item for a child $y$ of a vertex $x$, we have $S_y \subseteq S_x$ and 
the group $\Pi_y$ is a saturated
subgroup of $\Pi_x$ that contains all stabilisers of elements in
$S_x$;

\item if the set $Y$ of children of $x$ is nonempty, then $\Pi_y$
is the same for all $y \in Y$, the sets $S_y$ ($y \in Y$) are disjoint,
and $\bigsqcup_{y \in Y} S_y$ intersects each $\Pi_x$-orbit in $S_x$
in precisely one $\Pi_y$-orbit. \qedhere
\end{enumerate}
\end{de}

\begin{lm} \label{lm:Stabiliser}
Let $\tau$ be a preotree for $(\Pi,S)$, let $x$ be a vertex in $\tau$,
let $s,t \in S_x$, and let $u \in \Pi$ with $us=t$. Then $u \in
\Pi_x$. In particular, $\Pi_x$ contains the stabilisers in $\Pi$ of all
elements in $S_x$. 
\end{lm}

\begin{proof}
We proceed by induction on the preotree. The statement is clearly true
for $x$ equal to the root, since this satisfies $\Pi_x=\Pi$. Assuming
that the statement is true for a vertex $x$, we show that it is true for
all its children. So let $Y$ be the set of children of $x$ and let $y
\in Y$. Given $s,t \in S_{y}$, let $u \in \Pi$ with $us=t$. Then $s,t \in
S_x$, and the induction hypothesis implies that $u \in \Pi_x$. Hence $s,t$
are in the same $\Pi_x$-orbit, and since $S_{y}$ is $\Pi_y$-stable and
$\bigsqcup_{y' \in Y} S_{y'}$ intersects each $\Pi_x$-orbit in precisely
one $\Pi_y$-orbit, there exists a $v \in \Pi_y$ with $vt=s$. Then $vu$
is in the stabiliser of $s$ in $\Pi_x$, and therefore, by requirement
(2), in $\Pi_y$. It follows that $u$, too, is in $\Pi_y$.
\end{proof}

\begin{de} \label{de:NumericalData}
Let $\tau$ be a preotree for $(\Pi,S)$. Then {\em numerical data} for
$\tau$ consists of, for each non-leaf $x$ of $\tau$, a homomorphism
$f_{x}:\Pi_x \to \RR$ and a map $g_x:S_x \to \RR$ satisfying the following
properties: 
\begin{enumerate}
\item for all $u \in \Pi_x$ and all $s \in S_x$, we have
$g_x(us)=f_x(u)+g_x(s)$;
\item for each child $y$ of $x$ we have $\Pi_y=\ker(f_x)$ and $g_x$
is constant on $S_y$ (by abuse of notation, we will often denote this
constant by $g_x(S_y)$);
\item for any two distinct children $y,z$ of $x$ and $s \in S_y$ and
$t \in S_z$ the numbers $g_x(s)$ and $g_x(t)$ are in distinct cosets
for $\im(f_x)$; and 
\item if $\im(f_x) \cong \Pi_x/\ker(f_x)$ has rank $1$ and is generated by
$u_0+\ker(f_x)$, then for every child $y$ of $x$ we have $g_x(S_y) \in 
(0,|f_x(u_0)|)$.
\end{enumerate}
We will use the notation $(f_x,g_x)_x$ for numerical data for $\tau$.
\end{de}

\begin{re}
Every preotree admits numerical data: if $x$ is a non-leaf, then its
children $y$ all have the same saturated subgroup $\Pi_y$ attached
to them, and there is a homomorphism $f_x:\Pi_x \to \RR$ whose
kernel is $\Pi_y$. We can further choose the restriction of $g_x$
to each $S_y$ constant in such a manner that also conditions (3) and
(4) above are satisfied. To extend $g_x$ to all of $S_x$, we then set
$g_x(us):=f_x(u)+g_x(s)$ for $u \in \Pi_x$ and $s \in S_y$; this is well
defined because $\ker(f_x)=\Pi_y$ contains the stabiliser of $s$.

Condition (4) will become important in our proofs that certain
conditions on numerical data are semi-algebraic: by restricting the values
of $g$ on the union of the $S_y$ to some finite interval, we ensure
that functions such as $\lceil . \rceil$, which are not semi-algebraic
on all of $\RR$, become semi-algebraic.
\end{re}

Let $x$ be a non-leaf in a preotree. Then the numerical data $(f_x,g_x)$
attached to $x$ is given by a vector of real numbers as follows.  Fix free
generators of the rank-$n$ group $\Pi_x/\Pi_y$, where $y$ is any child
of $x$, and choose an order on the $m$ children of $x$. Then $f_x$ is
determined by the values of the induced
homomorphism $\Pi_x/\Pi_y=\Pi_x/\ker(f_x) \to \RR$ on the $n$ generators, 
and $g$ is determined
by its $m$ values on the $S_y$ as $y$ varies over the children of $x$.
The resulting vector in $\RR^n \times \RR^m$ lies in the irrational
space of rank $(n,m)$ defined as follows.

\begin{de}
Let $n,m$ be nonnegative integers. The {\em irrational space}
$\Xi(n,m)$ of rank
$(n,m)$ is the set 
\begin{align*} \{(a,b) \in \RR^n \times \RR^m \mid &\ a_1,\ldots,a_n \text{ are
linearly independent over }\QQ \text{ and}\\ &\ \forall j,l \in [m]: j
\neq l \Rightarrow b_j-b_l \notin \bigoplus_{i=1}^n \ZZ a_i \text{
and}\\
&\ n=1 \Rightarrow \forall j \in [m]: b_j \in (0,|a_1|)
\}.
\end{align*}
The {\em space of numerical data} for a given preotree $\tau$ is the
product, over all non-leaves $x$, of the corresponding irrational
spaces.
\end{de}

\begin{re} \label{re:Admissible}
We will often be concerned with a finite product $\Sigma=\prod_{i=1}^k
\Xi(n_i,m_i)$ of irrational spaces. Then $\Sigma \subseteq \RR^N$ for
$N:=\sum_{i=1}^k (n_i+m_i)$ is the complement in $\RR^N$ of a countable
union of hyperplanes defined by rational linear equations.  By slight
abuse of terminology, we will call a subset of $\Sigma$ semi-algebraic
over $\QQ$, or $\QQ$-semi-algebraic, if it is the intersection of $\Sigma$
with a set in $\RR^\NN$ that is semi-algebraic over $\QQ$. Such sets are
then automatically admissible in the sense of \S\ref{sec:Introduction}.
\end{re}

\begin{de}
Given a preotree $\tau$ for $(\Pi,S)$ and numerical data $(f_x,g_x)_x$
for $\tau$, we define a relation $\leq$ on $S$ recursively as follows. 
\begin{enumerate}
\item If the root $r$ of $\tau$ is a leaf, then $s \leq t$ holds for
all $s,t \in S_r$.
\item If the root $r$ of $\tau$ is not a leaf, then $s \leq t$ holds if
either $g_r(s)<g_r(t)$, or else $g_r(s)=g_r(t)$ and in that case we require a bit
more: pick $u \in \Pi_r$ such that $us \in S_y$ for some child $y$ of
$r$. We will see below why this can be done, and that then also $ut$
lies in $S_y$. We then require that $us \leq ut$ in the preorder
on $S_y$ defined by the sub-preotree of $\tau$ rooted at $y$, with the
numerical data that it inherits. In the proof of
Theorem~\ref{thm:Preotree} we will see that this is independent 
of the choice of $u$, so that the order $\leq$ on $S$ is well-defined. \qedhere
\end{enumerate}
\end{de}

In case (2) with $g_r(s)=g_r(t)$, such an element $u$ with $us \in S_y$ for some child $y$ of $r$
exists, and that child $y$ is independent of the choice of $u$, since
the disjoint union of the $S_y$ over all children $y$ intersects every
$\Pi_r$-orbit in precisely one $\Pi_y$-orbit, which is then contained
in a single $S_y$. We further need that also $ut$ lies in $S_y$.  Indeed, let
$v \in \Pi_r$ be such that $vt \in S_z$ for some child $z$ of $r$. Then
\begin{align*} 
g_r(vt)-g_r(us)&=f_r(v)- f_r(u)+ g_r(t)-g_r(s)\\&=f_r(v)-f_r(u) \in
\im(f_r) \end{align*}
and therefore $z=y$ by condition (3) in 
Definition~\ref{de:NumericalData}. But then $g_r(vt)=g_r(ut)$, so that
$f_r(uv^{-1})=0$ and hence $uv^{-1} \in \ker(f_r)=\Pi_y$. Hence $ut=uv^{-1}vt
\in S_y$ as claimed. 

\begin{thm} \label{thm:Preotree}
Let $\Pi$ be a finitely generated abelian group and let $S$ be a finitely
generated $\Pi$-set. Then for any preotree $\tau$ and for any numerical data
$(f_x,g_x)_x$ for $\tau$, the relation $\leq$ is a preorder on the $\Pi$-set
$S$. Conversely, any preorder arises in this manner. 
\end{thm}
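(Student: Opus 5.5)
Both directions will be proved by induction on the number of vertices of $\tau$ (respectively, on a measure of the preorder such as the rank of $\Pi$ plus the number of orbits). The recursion in the definition of $\leq$ is exactly what makes this induction work.

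\emph{Forward direction.} If the root is a leaf, $\leq$ is the trivial preorder. Otherwise, first check well-definedness. Suppose $g_r(s)=g_r(t)$ and $u,u'\in\Pi_r$ both send $s$ into $S_y$. Then $u's=(u'u^{-1})us$ with both $us$ and $u's$ in $S_y$. By condition (3) of a preotree, these lie in one $\Pi_y$-orbit. Since $\Pi_y$ contains all stabilisers of elements of $S_r$, we get $w:=u'u^{-1}\in\Pi_y$. By induction, the preorder on $S_y$ is $\Pi_y$-invariant, so $us\leq ut\iff wus\leq wut$.

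Next check the preorder axioms.
\begin{itemize}
\item Totality and reflexivity follow by comparing $g_r$-values and using induction in the case of ties.
\item For transitivity with $s\leq t\leq q$: if any $g_r$-inequality is strict, we are done. If $g_r(s)=g_r(t)=g_r(q)$, use a single $u$ that moves $s$, $t$ and $q$ into the same $S_y$ (shown in the paragraph preceding the theorem), then apply induction.
\item For compatibility with $v\in\Pi$, use $g_r(vs)=f_r(v)+g_r(s)$, so strict inequalities are preserved. In the tie case, take $uv^{-1}$ in place of $u$, which is legitimate by well-definedness.
\end{itemize}

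\emph{Converse.} Given a preorder $\leq$, if $\approx$ has a single class, take $\tau$ to be a single leaf. Otherwise, Proposition~\ref{prop:GroupSetPreorder} gives $f$ and a non-constant $g$ with $s\leq t\Rightarrow g(s)\leq g(t)$ and $g(us)=f(u)+g(s)$. I set $f_r:=f$ and $\Pi_y:=\ker f$, which is saturated.
\begin{itemize}
\item \emph{Case $f=0$.} The proof of Proposition~\ref{prop:GroupSetPreorder} groups the orbits into the $\preceq$-classes; then $\Pi_y=\Pi$ and the children are these unions of orbits. This is not quite one $\Pi_y$-orbit per $\Pi_x$-orbit, but condition (3) only needs $\bigsqcup S_y$ to meet each orbit in one $\Pi_y$-orbit, which holds here with $S_y$ equal to a whole union of orbits. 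Condition (3) of the numerical data holds because $\im f=0$ and $g$ takes distinct values on distinct classes.
\item \emph{Case $f\neq0$.} If $\im f$ has rank $1$, I first use Proposition~\ref{prop:ChoicesForG} to choose $g$ in the relative interior of the alcove, so as to realise condition (4). Then, in each fibre $g^{-1}(c)$ (a union of $\ker f$-orbits), I group the $\Pi_y$-orbits by the coset $g+\im f$ to form children. Concretely, for each class $\gamma\in\RR/\im f$ attained by $g$, pick a representative value $c_\gamma$ (inside $(0,|f(u_0)|)$ when the rank is $1$) and let $S_{y_\gamma}:=g^{-1}(c_\gamma)$. This set is $\ker f$-stable, and it meets every $\Pi$-orbit with $g$-class $\gamma$ in exactly one $\ker f$-orbit, since $g(us)=g(s)$ forces $u\in\ker f$ given the stabiliser condition.
\end{itemize}

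Then restrict $\leq$ to each $S_{y}$, a finitely generated $\ker f$-set. Induction applies because either the rank drops ($f\neq0$) or the number of $\preceq$-classes and orbits shrinks ($f=0$). This yields sub-preotrees, which I glue below $r$. Finally verify that the recursively defined relation agrees with $\leq$.
\begin{itemize}
\item When $g(s)<g(t)$: the forward implication gives $s<t$ provided $g$ was chosen strictly compatible. This is the key subtlety. Proposition~\ref{prop:GroupSetPreorder} only gives $s\leq t\Rightarrow g(s)\leq g(t)$, whose contrapositive $g(s)<g(t)\Rightarrow$ not $t\leq s$ is exactly what is needed.
\item When $g(s)=g(t)$: translate by $u$ into $S_y$, where by construction the preorder is the restriction of $\leq$, and use $\Pi$-invariance.
\end{itemize}

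The main obstacle I expect is the careful choice of $g$ and the children in the converse: ensuring conditions (3) and (4) of the numerical data (distinct cosets, values in $(0,|f(u_0)|)$), and that the stabiliser requirement (2) holds, i.e.\ that stabilisers lie in $\ker f$. The stabiliser requirement is the well-definedness argument in the proof of Proposition~\ref{prop:GroupSetPreorder}. Condition (4) may require shifting each child's representative value by multiples of $f(u_0)$, which is harmless since $g$ is defined per orbit.
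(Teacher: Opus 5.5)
Your argument follows the paper's proof: induction on the tree for the forward direction, and for the converse Proposition~\ref{prop:GroupSetPreorder}, with children indexed by the $\im(f)$-cosets contained in $\im(g)$, $S_y$ a fibre of $g$, $\Pi_y=\ker(f)$, and induction on rank plus number of orbits. The well-definedness argument via stabilisers, the contrapositive giving $g(s)<g(t)\Rightarrow s<t$, and the induction measure are all fine. One step fails as written, however: securing condition~(4) of Definition~\ref{de:NumericalData} when $\im(f)=\ZZ f(u_0)$.

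You choose, for each coset $\gamma$ of $\im(f)$ attained by $g$, a representative $c_\gamma\in(0,|f(u_0)|)$. This is impossible when $\gamma=\im(f)$ itself, which happens exactly when $0\in\im(g)$: that coset consists of the multiples $kf(u_0)$, and none of them lies strictly between $0$ and $|f(u_0)|$. The case does arise. For the point $c^{(l)}$ constructed in the proof of Lemma~\ref{lm:dijci}, one has $g(s_l)=c_l=d_{ll}=0$.

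Neither of your remedies addresses it. Shifting a representative by multiples of $f(u_0)$ stays inside the same coset. That shift must also be read as choosing the fibre $u_0^k\,g^{-1}(c_\gamma)$ instead of $g^{-1}(c_\gamma)$; altering $g$ on a single orbit is not harmless, since it can break $s\le t\Rightarrow g(s)\le g(t)$. Choosing $g$ in the relative interior of the alcove of Proposition~\ref{prop:ChoicesForG} does not help either. That alcove is invariant under adding a constant vector, so its relative interior contains choices with $g(s_1)=0$. The relative interior matters only for minimal preotrees, not for condition~(4).

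The missing step is the normalisation the paper makes before building the children. Replace $g$ by $g+c$ for a constant $c$ with $0\notin\im(g+c)$. Such a $c$ exists because $\im(g)$ is countable, and the shift preserves both $g(us)=f(u)+g(s)$ and $s\le t\Rightarrow g(s)\le g(t)$. Since $\im(g)$ is a union of $\im(f)$-cosets, $0\notin\im(g)$ means none of these cosets is $\im(f)$. Each then has a unique representative in $(0,|f(u_0)|)$, and the rest of your construction goes through.
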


\begin{proof}
We show that $\leq$ is indeed a preorder, proceeding by
induction on the size of the preotree. First, $\leq$ is
well-defined: if, in the definition above, we had chosen any other $v \in
\Pi_r$ with $vs,vt \in S_y$, then $f_r(uv^{-1})=0$ and therefore $uv^{-1}
\in \Pi_y$. By the induction hypothesis, we then know that
\[ vs \leq vt \Leftrightarrow (uv^{-1})vs \leq (uv^{-1})vt
\Leftrightarrow us \leq
ut. \]
Reflexivity of the preorder $\leq$ for $\tau$ follows from the reflexivity of the
preorders $\leq$ for the preotrees whose roots are the children of $r$.

Now let $s,t \in S$ satisfy $s \leq t$ and let $v \in \Pi=\Pi_r$. If
$g_r(s)<g_r(t)$, then also $g_r(vs)=f_r(v)+g_r(s)<f_r(v)+g_r(t)=g_r(vt)$,
so that $vs \leq vt$. Next assume that $g_r(s)=g_r(t)$. Then also
$g_r(vs)=g_r(vt)$. Now let $u \in \Pi_r$ be such that $u(vs),u(vt)
\in S_y$ for some child $y$ of $x$. Then $(uv)s,(uv)t \in S_y$ and
hence $(uv)s \leq (uv)t$ in the preorder $\leq$ on $S_y$ defined by the
preotree, with its numerical data, rooted at $y$. But this is precisely
the requirement ensuring that $vs \leq vt$.

Transitivity of $\leq$ and the fact that any two elements of $S$ are
comparable with respect to $\leq$ also follow readily from the
induction hypothesis. This concludes the proof of the first statement.

For the converse, let $\leq$ be a preorder on the $\Pi$-set
$S$. We start with the preotree $\tau$ with a single vertex $r$. If
$s \leq t$ holds for all $s,t$, then we are done. Otherwise, by
Proposition~\ref{prop:GroupSetPreorder}, there exists a homomorphism
$f:\Pi \to \RR$ and a non-constant map $g:S \to \RR$ such that $g(us)=f(u)+g(s)$
for all $u \in \Pi$ and $s \in S$ and such that $s \leq t$ implies
$g(s) \leq g(t)$. By adding a constant to $g$, we may assume that $0
\notin \im(g)$. We set $f_r:=f$ and $g_r:=g$.  Since $S$ is finitely
generated, $\im(g)$ is a union of finitely many $\im(f)$-cosets,
and we let $Y \subseteq \im(g)$ be a system of coset representatives. If
$\Pi/\ker(f)$ has rank $1$ and is generated by $u_0 +\ker(f)$, then
we may take $Y \subseteq (0,|f(u_0|)$. We now make $Y$ the set of
children of $r$.  For each $y \in Y$, we set $S_y:=g^{-1}(y)$ and
$\Pi_y:=\ker(f)$. Note that $\bigsqcup_{y \in Y} S_y$ intersects every
$\Pi$-orbit in precisely one $\Pi_y$-orbit, as required.

When $f=0$, the $\Pi_y$ are equal to $\Pi$ but each $S_{y}$ has
fewer $\Pi$-orbits. And if $f \neq 0$, then the rank of $\Pi_y$ is
strictly smaller than that of $\Pi$. Hence by induction, 
the restriction of $\leq$ to each $S_{y}$ is described by a preotree
with numerical data, and therefore the same is true of $\leq$.
\end{proof}

\begin{re} \label{re:Size}
In the construction in Theorem~\ref{thm:Preotree}, the resulting preotree
has width at most the number $|S/\Pi|$ of $\Pi$-orbits on $S$ and height
at most $|S/\Pi| + \rk(\Pi)$. But in the definition of preotrees we do
not require this efficiency: it is allowed, for instance, that $g_x$ is
constant for some vertex $x$. 
\end{re}

\begin{prop} \label{prop:Opposite}
Let $\tau$ be a preotree for $(\Pi,S)$, let $p:=(f_x,g_x)_x$ be numerical
data for $\tau$, and let $\leq$ be the preorder defined by $(\tau,p)$.
Then also $p':=(f'_x,g'_x)_x$ defined by $f'_x:=-f_x$ and 
\[ g'_x(s):=
	\begin{cases}
		-g_x(s) & \text{if $\im(f_x)$ has rank $0$ or 
			$\geq 2$, and}\\
		-g_x(s)+|f_x(u_0)| & \text{if $\im(f_x)=\ZZ \cdot f_x(u_0)$}
	\end{cases}
\]
is numerical data for $\tau$, and the preorder $\leq'$ defined
by $(\tau,p')$ is the dual preorder, i.e., satisfying $s \leq' t
\Leftrightarrow t \leq s$.
\end{prop}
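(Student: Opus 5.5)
The proof has two parts. First we check that $p'$ satisfies the four conditions of Definition~\ref{de:NumericalData}. Then we show by induction on the size of $\tau$ that $(\tau,p')$ defines the dual preorder.

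\textbf{Conditions (1)--(4).} Fix a non-leaf $x$. In the rank-$1$ case $g'_x$ differs from $-g_x$ only by the constant $|f_x(u_0)|$, so in all cases $g'_x(us)=-f_x(u)-g_x(s)+\text{const}=f'_x(u)+g'_x(s)$, which is (1). Also $\ker(f'_x)=\ker(f_x)=\Pi_y$, and $g'_x$ is constant on each $S_y$ because $g_x$ is; this gives (2). For (3), note that $\im(f'_x)=\im(f_x)$. Hence $g'_x(s)-g'_x(t)=-(g_x(s)-g_x(t))$ lies in $\im(f'_x)$ if and only if $g_x(s)-g_x(t)$ lies in $\im(f_x)$, so distinct children still give distinct cosets. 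For (4), in the rank-$1$ case $u_0$ (or $u_0^{-1}$) generates $\Pi_x/\ker(f_x)$ and $|f'_x(u_0)|=|f_x(u_0)|$. If $g_x(S_y)\in(0,|f_x(u_0)|)$, then $g'_x(S_y)=|f_x(u_0)|-g_x(S_y)\in(0,|f_x(u_0)|)$. This shift is exactly why the rank-$1$ case needs the extra constant.

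\textbf{Duality.} We induct on the size of $\tau$, as in the proof of Theorem~\ref{thm:Preotree}. If the root $r$ is a leaf, both relations are trivial, so they are dual. Otherwise, since $g'_r=-g_r+c$ for a constant $c$, we have $g'_r(s)<g'_r(t)$ if and only if $g_r(t)<g_r(s)$, and $g'_r(s)=g'_r(t)$ if and only if $g_r(s)=g_r(t)$.

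In the equality case, the definition picks $u\in\Pi_r$ with $us,ut\in S_y$ for a child $y$. The child $y$ and the admissible choices of $u$ depend only on the $\Pi_r$-orbit and on which $S_y$ is hit, not on the numerical data, so the same $u$ and $y$ serve for $p$ and $p'$. The numerical data that $p'$ induces on the subtree rooted at $y$ is exactly the primed version of what $p$ induces there. By the induction hypothesis, $us\leq' ut$ in $S_y$ if and only if $ut\leq us$ there. Combining the strict and equal cases gives $s\leq' t\Leftrightarrow t\leq s$.

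The main point needing care is checking (3) and (4) for $p'$, in particular the rank-$1$ interval condition that motivates the shifted formula. The induction itself is routine.
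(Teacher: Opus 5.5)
Your proposal is correct and follows the same route as the paper. The paper's proof says only that $p'$ being numerical data is immediate from Definition~\ref{de:NumericalData} and that duality follows by induction on the size of $\tau$; your verification of conditions (1)--(4), including the shifted interval in the rank-$1$ case, and your inductive step write out those details correctly.
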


\begin{proof}
The first statement is immediate from Definition~\ref{de:NumericalData},
and the second follows by induction on the size of $\tau$.
\end{proof}

%%%%%%%%%%%%%%%%%%%%%%%%%%%%%%%%%%%%%%%%%%%

\subsection{Preorders via partrees}

Theorem~\ref{thm:Preotree} gives a precise classification of all preorders
on the $\Pi$-set $S$, and will be used intensively in our work on monoids
below. However, for other purposes, the following coarser characterisation
of preorders is useful. This is the analogue for a $\Pi$-set $S$ of the
description of a preorder on the monoid $\ZZ^n$ by means of a matrix.
We recall the notion of partrees from Definition~\ref{de:Partree}.

\begin{de}
A {\em group partree} for $(\Pi,S)$ is a partree $\tau$ on $S$ with the
following additional properties for every vertex $x$ of $\tau$:
\begin{enumerate}
\item the subset $S_x \subseteq S$ is $\Pi$-stable; 
\item the preorder $\leq_x$ on $S_x$ is given by $s \leq_x t
\Leftrightarrow g_x(s) \leq g_x(t)$, where $f_x:\Pi \to \RR$ is an
arbitrary group homomorphism and $g_x:S_x \to \RR$ is an arbitrary map
satisfying $g_x(us)=f_x(u)+g_x(s)$ for all $u \in \Pi$ and $s \in S_x$;
\item the partial order $\preceq_x$ on the children of $x$ is a total order.
\end{enumerate}
The tuple $(f_x,g_x)_x$ is the {\em numerical data} of the group partree
$\tau$; everything else is the {\em combinatorial data}.
\end{de}

\begin{cor} \label{cor:Partree}
The relation $\leq$ on $S$ determined by any group partree for $(\Pi,S)$
is a preorder on the $\Pi$-set $S$. Conversely, every preorder on the $\Pi$-set $S$ arises in this
manner. 
\end{cor}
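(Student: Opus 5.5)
The first statement is a consequence of Lemma~\ref{lm:Partree}, and the converse will come from Theorem~\ref{thm:Preotree}. The work is to check that the extra structure of a group partree gives compatibility with the $\Pi$-action, and then to convert a preotree with numerical data into a group partree.

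\emph{Group partrees give preorders.} By Lemma~\ref{lm:Partree}, the relation $\leq$ is a total preorder on $S$, so it remains to show $s \leq t \Rightarrow us \leq ut$ for $u \in \Pi$. Every $S_x$ is $\Pi$-stable. Since the $S_y$ at the children of $x$ partition $S_x$, each $S_y$ is $\Pi$-stable as well, so $s$ and $us$ always lie in the same vertex sets. Hence the maximal path from the root for the pair $us,ut$ coincides with that for $s,t$, and at the end of the path $us,ut$ lie in the same children $y,z$ as $s,t$ do. At each vertex $x_i$ on the path we have $g_{x_i}(ut)-g_{x_i}(us)=g_{x_i}(t)-g_{x_i}(s)$, because $g_{x_i}(uq)=f_{x_i}(u)+g_{x_i}(q)$. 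So every comparison $\approx_{x_i}$, $<_{x_i}$ and $\prec_{x_k}$ used in the definition of $\leq$ is unchanged, and $us\leq ut$.

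\emph{Every preorder arises.} Given a preorder on $S$, Theorem~\ref{thm:Preotree} provides a preotree $\tau$ with numerical data $(f_x,g_x)_x$ defining it. The obstacle is that in $\tau$ the sets $S_x$ are only $\Pi_x$-stable, not $\Pi$-stable, and $f_x$ is defined only on $\Pi_x$. I would therefore argue by induction on the preotree and at each vertex replace its set by its $\Pi$-saturation. Concretely, if the root $r$ is a leaf, the preorder is trivial, and the one-vertex partree with $f=0$, $g=0$ gives it. Otherwise I form a partree vertex $r$ with $S_r=S$, $f_r:=f_r$ and $g_r:=g_r$; condition (2) of a group partree holds since $\Pi_r=\Pi$.

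For its children I take one child $\hat y$ for each child $y$ of $r$ in $\tau$, with $S_{\hat y}:=\Pi S_y$. These sets partition $S$: $\Pi S_y$ is exactly the set of $s$ with $g_r(s)\in g_r(S_y)+\im(f_r)$, and by condition (3) of Definition~\ref{de:NumericalData} these cosets are distinct for distinct $y$. I order the children totally by $\hat y \prec \hat z$ iff $g_r(S_y)<g_r(S_z)$; the values are distinct, so this is a total order and condition (3) of a partree holds trivially. Two elements of $S$ that are $\approx_r$ lie in the same $S_{\hat y}$, so $\prec$ in fact never decides a comparison; I could just as well take it arbitrary.

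Next I apply the induction hypothesis to the sub-preotree at $y$. It is a preotree for $(\Pi_y,S_y)$ and yields a group partree on $S_y$ for the group $\Pi_y$. This has to be transported to $\Pi S_y$ for the group $\Pi$, which is the main technical step. The idea is to choose a complement $\Pi=\Pi_y\oplus Q$, which exists because $\Pi_y$ is saturated, so $\Pi/\Pi_y$ is free. The map $Q\times S_y\to \Pi S_y$, $(q,s)\mapsto qs$, is a bijection. Injectivity uses Lemma~\ref{lm:Stabiliser}: if $qs=q's'$, then $q'^{-1}q\in\Pi_y$, so $q=q'$. Through this bijection I extend each vertex set $T$ of the partree on $S_y$ to $QT$, each $f$ to $\Pi$ by letting it vanish on $Q$, and each $g$ by $g(qs):=g(s)$. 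The resulting partree defines on $\Pi S_y$ the preorder that compares $qs,q't$ only when $q=q'$ and then by the comparison of $s,t$.

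Finally I check that this matches the preotree's definition. Two elements $s,t$ with $g_r(s)=g_r(t)$ both lie in the same $\Pi S_y$, and they are compared via $us,ut$ with $u\in\Pi$, $us,ut\in S_y$. Writing $s=qs'$ with $q\in Q$, $s'\in S_y$, we may take $u=q^{-1}$, so $us=s'\in S_y$; by the well-definedness established in the definition preceding Theorem~\ref{thm:Preotree}, $ut\in S_y$ too, so $t=qt'$ with $t'=ut\in S_y$ and the same $q$. Hence the two definitions agree.
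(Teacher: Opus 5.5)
Your proof is correct and is essentially the paper's. In both, the group partree lives on the same tree with vertex sets $\Pi S_x$, each $f_x$ is extended from the saturated subgroup $\Pi_x$ to $\Pi$ (you choose the extension that is zero on a complement $Q$; the paper allows any extension), each $g_x$ is extended equivariantly, and the orders on children play no role. You only differ in verifying agreement by induction on the tree rather than by unwinding the comparison path.

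One harmless slip: the transported partree on $\Pi S_y$ is the pull-back of the partree on $S_y$ along $qs\mapsto s$. So it compares $qs$ and $q't$ via $s,t$ for all $q,q'\in Q$, not ``only when $q=q'$''. Pairs with $q\neq q'$ are already separated by $g_r$ at the root, so your final check is unaffected.
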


\begin{proof}
Lemma~\ref{lm:Partree} establishes that $\leq$ is reflexive, transitive,
and total. Next we verify that $\leq$ is compatible with the action of
$\Pi$. Consider any $s,t \in S$ and $u \in \Pi$. Let $x_0,\ldots,x_m$ be the
maximal path for $s,t$. Since $S_x$ is $\Pi$-stable for all vertices $x$,
the path $x_0,\ldots,x_m$ is also the maximal path for $us,ut$.  For every
$i$, since $g_{x_i}(us)=f_{x_i}(u) + g_{x_i}(s)$ and $g_{x_i}(ut)=f_{x_i}(u) + g_{x_i}(t)$, 
we have $s \leq_{x_i} t$ if and only if $us \leq_{x_i} ut$.  Finally,
if $s \in S_x$ and $t \in S_y$ for distinct children $y,z$ of $x_m$,
then also $us \in S_x$ and $ut \in S_y$.  Using these observations,
it is easy to verify that $s \leq t \Leftrightarrow us \leq ut$.

For the converse, let $\leq$ be any preorder on $S$. By
Theorem~\ref{thm:Preotree}, $\leq$ can be described by a preotree $\tau$
with numerical data. In $\tau$, each vertex $x$ is labelled by a saturated
subgroup $\Pi_x$ and a $\Pi_x$-stable subset $S_x$.  Furthermore,
for every non-leaf $x$ the numerical data $(f_x,g_x)$ consists of a
homomorphism $f_x:\Pi_x \to \RR$ and a map $g_x:S_x \to \RR$ such that
$g_x(us)=f_x(u)+g_x(s)$ for all $u \in \Pi_x$ and all $s \in S_x$.

Since $\Pi_x$ is saturated, we can extend $f_x$
to a homomorphism $\tilde{f}_x:\Pi \to \RR$. Set $\tilde{S}_x:=\Pi S_x$. We claim that we can also
extend $g_x$ to a map $\tilde{g}_x:\tilde{S}_x \to \RR$ satisfying
$\tilde{g}_x(us)=\tilde{f}_x(u)+\tilde{g}_x(s)$ for all $u \in \Pi$ and
all $s \in \tilde{S}_x$. Any element $\tilde{s} \in \tilde{S}_x$ is of the
form $us$ for some $u \in \Pi$ and $s \in S_x$, so we are forced to set
$\tilde{g}_x(\tilde{s}):=\tilde{f}_x(u)+g_x(s)$. This is well-defined:
if $\tilde{s}$ also equals $vt$ with $v \in \Pi$ and $t \in S_x$, then
$t=v^{-1}us$, and hence $v^{-1}u \in \Pi_x$, by Lemma~\ref{lm:Stabiliser},
yielding
\[ \tilde{f}_x(u) + g_x(s)= \tilde{f}_x(v) + f_x(v^{-1}u) +
g_x(s)= \tilde{f}_x(v) + g_x(t).
\]
Clearly $\tilde{g}_x$ also satisfies 
$\tilde{g}_x(us)=\tilde{f}_x(u)+\tilde{g}_x(s)$ for all $u \in \Pi$
and $s \in \tilde{S}_x$. 
If $x$ is a leaf, so that $f_x,g_x$ are not defined, then we set
$\tilde{f}_x$ and $\tilde{g}_x$ both equal to zero. 

Now let $\tilde{\tau}$ be the group partree for $(\Pi,S)$ obtained
from the preotree $\tau$ by taking the same underlying tree and the
following data: 
\begin{enumerate}
\item for each $x$, the set $\tilde{S}_x$ (instead of $S_x$);
\item for each $x$, the preorder $\leq_x$ on $\tilde{S}_x$ defined by
$s \leq_x t \defiff
\tilde{g}_x(s) \leq \tilde{g}_x(t)$; and 
\item for each $x$, an arbitrary total order $\preceq_x$ on the children of $x$.
\end{enumerate}
This is indeed a group partree for $(\Pi,S)$. Indeed, if $x$ has children
$y_1, \ldots, y_k$, then $S_x$ equals the disjoint union $S_{y_1} \sqcup
\ldots \sqcup S_{y_k}$ and each $S_{y_i}$ is $\Pi_x$-stable. This implies
that $\tilde{S}_x=\Pi S_x$ is the union of the $\tilde{S}_{y_i}=\Pi
S_{y_i}$. Moreover, this union is again disjoint, because if $us=vt$ with
$u,v \in \Pi$ and $s\in S_{y_i}$ and $t \in S_{y_j}$, then $(v^{-1}u)s=t$,
so $v^{-1}u \in \Pi_x$,  by Lemma~\ref{lm:Stabiliser}, and hence $i=j$.

It remains to show that the preorder $\leq$ defined by the preotree
$\tau$ with its numerical data $(f_x,g_x)_x$ agrees with the preorder
$\tilde{\leq}$ defined by the group partree $\tilde{\tau}$, whose numerical
data is $(\tilde{f}_x,\tilde{g}_x)_x$. Let $s,t \in S$ and suppose that
$s \leq t$. Unwinding the definition of $\leq$, we see that there exist a path
$x_0,\ldots,x_k$ from the root in $\tau$ and elements 
$s_i,t_i \in S_{x_i}$ for $i=0,\ldots,k$ with the following properties:
\begin{enumerate}
\item $s=s_0$ and $t=t_0$;
\item for $i<k$ we have $g_{x_i}(s_i)=g_{x_i}(t_i)$ and there is a $u_i
\in \Pi_{x_i}$ such that $s_{i+1}=u_is_i$ and $t_{i+1}=u_it_i$;
\item either $x_k$ is a leaf (in which case also $t \leq s$ holds) or
else $g_{x_k}(s_k) < g_{x_k}(t_k)$. 
\end{enumerate}
It then follows, first, that $x_0,\ldots,x_k$ is an initial segment of
the maximal path for $s,t$ in the definition of $\tilde{\leq}$, and second, 
that for any $i \in \{0,\ldots,k\}$ 
\[ \tilde{g}_{x_i}(s)=\tilde{g}_{x_i}(u_1^{-1} \cdots u_{i-1}^{-1}s_i)
=\tilde{f}_{x_i}(u_1^{-1} \cdots u_{i-1}^{-1}) + g_{x_i}(s_i)\]
and similarly for $t$, so that 
\[ \tilde{g}_{x_i}(s)-\tilde{g}_{x_i}(t)=g_{x_i}(s_i)-g_{x_i}(t_i). \]
So for $i<k$ the latter expression is zero. If $x_k$ is a leaf,
then the definition of $\tilde{\leq}$ shows that $s \tilde{\leq} t$ (and also
$t \tilde{\leq} s$). And otherwise we have
$\tilde{g}_{x_k}(s)-\tilde{g}_{x_k}(t)=g_{x_k}(s_k)-g_{x_k}(t_k)<0$,
hence $s \tilde{<} t$. 

(Note that the total orders $\preceq_x$ do not play a role in
$\tilde{\leq}$ determined by $\tilde{\tau}$ with the numerical data
 constructed as above, but they do, of course, if the numerical data for
$\tilde{\tau}$ is chosen arbitrarily, e.g., all zero.)
\end{proof}

We observe that the numerical data of a group partree $\tau$ for $(\Pi,S)$
varies through $\RR^N$, where
\[ N=\sum_{x \text{ non-leaf of } \tau} (\rk(\Pi)+\text{ the number of orbits of $\Pi$ on $S_x$}),
\]
since $\rk(\Pi)$ real numbers uniquely determine $f_x$, and $g_x$ is then
defined by fixing its value on one representative on each $\Pi$-orbit in
$S_x$. This space is prettier than the irrational spaces discussed
above, but the price that we pay is that there is a lot more freedom in the
choice of numerical data for one and the same preorder. For that
reason, we will mostly use preotrees below. 

\begin{re}
In the construction in the proof above, the partree has the same size
as the preotree, which is bounded by Remark~\ref{re:Size}. The number
of possibilities for the combinatorial data of a partree of this size is
finite. This gives a straightforward proof of Theorem~\ref{thm:Algorithm}
for finitely generated $\Pi$-sets when $\Pi$ is a finitely generated
group: it boils down to solving finitely many linear programs defined
over the rational numbers.
\end{re}

%%%%%%%%%%%%%%%%%%%%%%%%%%%%%%%%%%%%%%%%%%%

\section{Semi-algebraic loci of numerical data}\label{sec:admissible}

%%%%%%%%%%%%%%%%%%%%%%%%%%%%%%%%%%%%%%%%%%%

This section still concerns finitely generated $\Pi$-sets $S$ where $\Pi$
is a finitely generated abelian group. We show that implications between
preorders on various sets are captured by semi-algebraic conditions on
the numerical data for corresponding preotrees; this is essential for
our work in Section~\ref{sec:Monoids}. This section is the most technical
part of the paper, and the reader may want to first take the results in
this section for granted and see how they are used in the remainder of
the paper, before studying their proofs in detail.

%%%%%%%%%%%%%%%%%%%%%%%%%%%%%%%%%%%%%%%%%%%

\subsection{Equivalence and minimal preotrees}

We first need to understand to what extent the preotree and its numerical
data are unique for a given preorder. One source of non-uniqueness is
captured by the following notion of equivalence.

\begin{de} \label{de:Equivalent}
Let $\tau$ be a preotree for $(\Pi,S)$ and let $p:=(f_x,g_x)_x$ be
numerical data for $\tau$. Pick a non-leaf $x$, a child $y$ of $x$, and
an element $u \in \Pi_x$. Now:
\begin{enumerate}
\item If $\Pi_x/\Pi_y$ has rank at least $2$, then for all vertices $z$
in the subtree of $\tau$ rooted at $y$ (including $z=y$) replace the set $S_z$ by $uS_z$,
and if $z$ is a non-leaf, replace $g_z$ by $g_z \circ u^{-1}$.
\item If $\Pi_x/\Pi_y$ has rank $1$, then replace $g_x$ by $s \mapsto
g_x(s)-f_x(u)$ and for all vertices $z$ in
all subtrees of $\tau$ rooted at children of $x$ (that is, $y$ and all the other
children of $x$) replace the set $S_z$ by
$uS_z$, and if $z$ is a non-leaf, replace $g_z$ by $g_z \circ u^{-1}$.
\end{enumerate}
Note that the second item ensures that the value $g_x(S_y)$ remains
unchanged, and is therefore contained in $(0,|f_x(u_0)|)$ if $u_0 + \Pi_y$ is
a generator of $\Pi_x/\Pi_y$. The resulting preotree $\tau'$ with the
resulting numerical data $p'$ defines the same preorder on $S$ as $\tau$
with $(f_x,g_x)_x$. We call $(\tau',p')$ {\em equivalent} to $(\tau,p)$
if the former arises from the latter by finitely many such changes.
\end{de}

A second source of non-uniqueness can be removed by concentrating on
minimal preotrees.

\begin{de}
A preotree $\tau$ for $(\Pi,S)$ is called {\em minimal} if it has the
following properties:
\begin{enumerate}
\item for all vertices $x$, except possibly for the root, $S_x \neq \emptyset$;
\item if $y$ is a child of $x$ with $\Pi_y=\Pi_x$, then $x$ has at least
two children;
\item if $y$ is a child of $x$ and $\Pi_x/\Pi_y$ has rank $\leq 1$,
then any child $z$ of $y$ satisfies $\Pi_z \subsetneq \Pi_y$. \qedhere
\end{enumerate}
\end{de}

\begin{ex} \label{ex:Minimal}
In Figure~\ref{fig:Alcove}, the red preotree is minimal, but the blue
preotree is not, since the group, $\{0\}$, attached to the two right-most leaves
is the same as the group attached to their parent, violating 
item (3) above.
\end{ex}

\begin{prop} \label{prop:Minimalisation}
Any preorder on the $\Pi$-set $S$ is realised by some minimal preotree
with numerical data. More precisely, let $\tau$ be a preotree. Then there
exists a minimal preotree $\taumin$, called a {\em minimalisation} of
$\tau$, with the following property. Let $\Sigma,\Sigmamin$ be the spaces
of numerical data for $\tau$ and $\taumin$, respectively. Then there
is a set $\Delta \subseteq \Sigma \times \Sigmamin$, semi-algebraic
over $\QQ$, with the property that for any $(p,\pmin) \in \Delta$,
the preorders defined by $(\tau,p)$ and $(\taumin,\pmin)$ are equal,
and such that the projection $\pi: \Delta \to \Sigma$ is surjective.
\end{prop}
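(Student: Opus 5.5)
We construct $\taumin$ from $\tau$ by a finite sequence of elementary reduction moves. Each move is of one of three kinds, one for each way minimality can fail. For each move we exhibit a $\QQ$-semi-algebraic correspondence between the numerical data before and after, and we then compose these correspondences. Composition is harmless: if $\Delta_1 \subseteq \Sigma\times\Sigma_1$ and $\Delta_2\subseteq \Sigma_1\times\Sigma_2$ are $\QQ$-semi-algebraic with surjective first projections, then the image of $\Delta_1\times_{\Sigma_1}\Delta_2$ in $\Sigma\times\Sigma_2$ is $\QQ$-semi-algebraic (Tarski--Seidenberg, intersected with the irrational space $\Sigma_2$ as in Remark~\ref{re:Admissible}) and still surjects onto $\Sigma$. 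The first claim of the proposition, that every preorder is realised by some minimal preotree, follows from this and Theorem~\ref{thm:Preotree}. Termination is clear: each move strictly decreases the number of vertices or the total depth.

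\emph{Move (1): delete empty vertices.} If $S_y=\emptyset$ for a non-root $y$, we delete the subtree rooted at $y$. The numerical data at the parent $x$ only loses the coordinate $g_x(S_y)$, and the data in the deleted subtree is forgotten. So $\Delta$ is the graph of a coordinate projection, and the preorder is unchanged because no element of $S$ passes through $y$.

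\emph{Move (2): contract only children.} Suppose $y$ is the only child of $x$ and $\Pi_y=\Pi_x$. Then $f_x=0$, and by condition (3) of Definition~\ref{de:NumericalData} together with the orbit condition, $g_x$ is constant on $S_x=S_y$. The comparison at $x$ is therefore always an equality, so we contract the edge $xy$ and keep the data at $y$. Again $\Delta$ is a coordinate projection. The degenerate case in which $x$ has several children with $\Pi_y=\Pi_x$ is already allowed by minimality and needs no move.

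\emph{Move (3): merge a rank-$\leq 1$ step with its equal-group child.} This is the substantive case. Let $y$ be a child of $x$ with $\Pi_x/\Pi_y$ of rank $\leq 1$, and let $z$ be a child of $y$ with $\Pi_z=\Pi_y$. Then $f_y=0$, and $g_y$ is constant on each $S_z$ and merely orders the children of $y$ (compare Example~\ref{ex:Z3}). The plan is to remove $y$ and attach its children directly to $x$, using first the equivalence moves of Definition~\ref{de:Equivalent}(2) so that the relevant sets sit in a fundamental domain.

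If $\Pi_x=\Pi_y$ (rank $0$), then $f_x=0$ as well, and we take the new $g_x$ on $S_z$ to be $g_x(S_y)+\varepsilon\, g_y(S_z)$ for small $\varepsilon>0$. Distinctness of cosets of $\im(f_x)=0$ here just means distinct values. The defining conditions, namely that $\varepsilon$ is small relative to the gaps between the values $g_x(S_{y'})$ and that $\varepsilon$ scales the spread of the $g_y(S_z)$, are polynomial inequalities over $\QQ$. The locus of $(p,\pmin)$ is therefore semi-algebraic, and it surjects onto $\Sigma$ because such an $\varepsilon$ always exists.

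If the rank is $1$, with generator $u_0$ and $a:=|f_x(u_0)|$, then all children of $x$ carry values $g_x(S_{y'})\in(0,a)$ in distinct cosets of $\ZZ a$. Replacing $S_y$ by the union of the $S_z$, we assign the new values $g_x(S_y)+\varepsilon\, g_y(S_z)$ with $\varepsilon$ small. This choice keeps the values in $(0,a)$, avoids the other values $g_x(S_{y'})$, and avoids each other modulo $\ZZ a$. The required irrationality of the differences modulo $\ZZ a$ is exactly the condition of lying in $\Xi$, and the set $\Delta$ simply intersects with it. Condition (4) is what keeps this semi-algebraic: every value lies in the bounded interval $(0,a)$, so no floor functions are needed.

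The main obstacle I expect is to verify, in the rank-$1$ case, that the merged preotree defines the same preorder. The ordering of the values must respect both the ordering by $g_x$ (which is lexicographically first) and the tie-breaking by $g_y$, including elements that are translates by $u_0^k$. One must also check that the new children still meet each $\Pi_x$-orbit in exactly one $\Pi_z$-orbit.
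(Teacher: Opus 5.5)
This is the paper's proof: the same three reductions, with the same perturbation $g_x(S_y)+\varepsilon g_y(S_z)$ constrained to keep the relative position of $g_x(S_y)$ among the other $g_x(S_{y'})$ and to stay in $(0,|f_x(u_0)|)$. Note that $\Delta$ must impose this relative position, not merely that the new values avoid the others. The check you flag as the main obstacle is what the paper calls a straightforward computation, and in rank $1$ it closes at once. Write $s=us'$, $t=vt'$ with $s',t'$ in children of $x$; the old and new $g_x$-differences both equal $f_x(uv^{-1})$ plus a number in $(-a,a)$, so their signs agree when $f_x(uv^{-1})\neq 0$, and otherwise the comparison is decided by relative position and then by $g_y$.

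Your general composition lemma is not valid as stated, because the intermediate quantifier ranges over $\Sigma_1$, which is in general not semi-algebraic. This is harmless here. Each intermediate datum is an explicit function of $p$ and the $\varepsilon$'s, deleting coordinates preserves membership in irrational spaces, and the only vertex whose data is genuinely altered has rank $\leq 1$, where $\Xi(n,m)$ is itself semi-algebraic.
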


By Remark~\ref{re:Admissible} the proposition implies that $\Delta$
is admissible.

\begin{proof}
Let $\leq$ be a preorder on $S$ and let $\tau$ with numerical data
$(f_x,g_x)_x$ realise $\leq$. Clearly, we can remove non-root
vertices $x$ in $\tau$ with $S_x=\emptyset$, as well as their numerical
data, without affecting the preorder.

If a vertex $x$ of $\tau$ has a single child $y$, and this satisfies
$\Pi_y=\Pi_x$, then $f_x=0$ and we can contract the edge between $x$
and $y$, and replace the zero homomorphism $f_x$ by $f_y$ and replace the
(constant) map $g_x$ by $g_y$.

If $x$ has at least one child $y$ and $\Pi_x/\Pi_y$ has rank $\leq 1$,
then $\im(f_x)$ is a discrete subgroup of $\RR$. If every child $z$ of $y$ 
satisfies  $\Pi_z=\Pi_y$, then $f_y=0$ and $g_y:S_y \to \RR$
takes finitely many values. We can then contract the edge between $x$
and $y$, so that the children of $y$ become (additional) children of $x$, keep the
same homomorphism $f_x$, and replace $g_x$ by the function $\tilde{g}_x$
that equals $g_x$ on all $\Pi_x S_{y'}$ with $y'$ a child of $x$ distinct from $y$
and that on $\Pi_x S_y$ is defined by
\[ \tilde{g}_x(u t):=f_x(u) + g_x(t) + \epsilon g_y(t), \quad u \in
\Pi_x \text{ and } t \in S_y, \]
where $\epsilon>0$ is to be chosen sufficiently small, as follows.
First, for every child $z$ of $y$, the value
$g_x(S_z) + \epsilon g_y(S_z)=g_x(S_y) + \epsilon g_y(S_z)$ must
be in the same relative position as the value $g_x(S_y)$ 
among the values $g_x(S_{y'})$ with
$y'$ ranging over the children of $x$ distinct from $y$;
and second,
if $\Pi_x/\Pi_y$ has rank $1$ and is generated by $u_0+\Pi_y$, then
$\tilde{g}_x$ still takes values in $(0,|f_x(u_0)|)$. These are
semi-algebraic conditions on the numerical
data for $\tau$ and the new tree, and a straightforward computation
shows that the preorder is not affected by the change. 
\end{proof}

%%%%%%%%%%%%%%%%%%%%%%%%%%%%%%%%%%%%%%%%%%%

\subsection{Relating two preotrees for a single $\Pi$-set}

\begin{prop} \label{prop:SemiAlgebraic1}
Let $\tau$ and $\tau'$ be preotrees for $(S,\Pi)$ and let $\Sigma,\Sigma'$
be their spaces of numerical data. Then the locus $\Gamma$ in $\Sigma
\times \Sigma'$ corresponding to pairs $(\leq,\leq')$ of preorders
on $S$ satisfying $\forall s,t \in S: s \leq t \Rightarrow s \leq' t$
is semi-algebraic over $\QQ$. The same holds when we replace, in the previous
condition, $\leq$ by $<$ and/or $\leq'$ by $<'$.
\end{prop}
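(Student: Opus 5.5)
The plan is to reduce the universally quantified condition $\forall s,t\in S$ to finitely many first-order conditions over $(\RR,+,\cdot,<)$ with rational constants, and then apply Tarski--Seidenberg quantifier elimination. The infinitely many pairs $(s,t)$ can be organised into finitely many families parametrised by lattice points of $\Pi$, since $\Pi$ is finitely generated and $S$ has finitely many orbits.

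First, fix orbit representatives $s_1,\ldots,s_m$ of $S$. Every pair $(s,t)$ is $\Pi$-equivalent to a pair $(s_i, u s_j)$ with $u\in\Pi$, and both preorders are $\Pi$-compatible; for a group, $s\le t\Leftrightarrow us\le ut$. So the condition becomes: for all $i,j$ and all $u\in\Pi$, $s_i\le u s_j\Rightarrow s_i\le' us_j$. Fix a surjection $\ZZ^n\to\Pi$ and write $u=\alpha\in\ZZ^n$.

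Second, for a fixed preotree $\tau$, describe the predicate $s_i\le \alpha s_j$ in terms of $(p,\alpha)$ by unwinding the recursive definition along the tree. At the root, the predicate compares $g_r(s_i)$ with $f_r(\alpha)+g_r(s_j)$; these are linear in $\alpha$ with coefficients among the coordinates of $p$ (after fixing a basis of $\Pi_x/\Pi_y$ and representatives in each $S_y$). In case of equality we pass to a child $y$. This requires translating by an element $v\in\Pi_r$ such that $v s_i\in S_y$ and $v\alpha s_j\in S_y$. The choice of child is determined combinatorially by the coset of $g_r(s_i)$ modulo $\im f_r$. Because of condition (3) of numerical data, equality $g_r(s_i)=g_r(\alpha s_j)$ forces the two elements into the same $S_y$ up to an explicitly determined translate. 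That translate depends on $\alpha$ only through its class in $\Pi_r/\Pi_y$, and linearly in $\alpha$.

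Doing this by induction on $\tau$ shows that, for each of finitely many combinatorial types (which child, which representative), the predicate $s_i\le\alpha s_j$ is a finite boolean combination of conditions of the form ``$\alpha$ lies in a fixed coset of a sublattice'' and ``$\ell_p(\alpha)\,\square\,0$''. Here $\ell_p$ is affine in $\alpha$ with coefficients polynomial (in fact linear) in $p$, and $\square\in\{<,=\}$.

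Third, the universal quantifier over $\alpha\in\ZZ^n$ remains. I would handle it by exploiting the irrationality built into $\Sigma$. On an irrational space, $f_x$ is injective on $\Pi_x/\Pi_y$, so an equality $\ell_p(\alpha)=0$ pins $\alpha$ down modulo $\Pi_y$. The strict inequalities then define half-spaces, and a statement of the form ``for every lattice point $\alpha$ in the region $\{\ell_p(\alpha)<0\}$ a further linear condition holds'' should be replaceable by the same statement for every real $\alpha$ in the corresponding real region. This step needs density of $\im f_x$ when the rank is at least $2$, and the finite interval constraint (4) when the rank is $1$. In the rank-$1$ case, the relevant lattice points near a boundary are among finitely many offsets $\alpha\in\{-1,0,1\}\cdot u_0$, because all $g$-values lie in $(0,|f_x(u_0)|)$. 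After this replacement we get a first-order formula in $p,p'$ with real quantifiers, and Tarski--Seidenberg gives a $\QQ$-semi-algebraic set. The variants with $<$ and/or $<'$ are handled identically, since $s<t$ is $\neg(t\le s)$.

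The main obstacle is the third step: justifying that the quantifier over lattice points can be replaced by a real quantifier, or by finitely many instances. This is where mixed-rank levels of the two trees $\tau,\tau'$ interact. I would first try induction on the depth of $\tau$ combined with the description of $\le'$ via its first homomorphism $f'_{r'}$. Comparing $f_r$ and $f'_{r'}$ as in the uniqueness argument of Proposition~\ref{prop:GroupSetPreorder} should show that the implication forces $f_r$ to be a nonnegative multiple of $f'_{r'}$ (a semi-algebraic condition) or $f'_{r'}=0$, and then the comparison recurses into children.
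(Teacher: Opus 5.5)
Your Steps 1 and 2 are sound. On the irrational space, $s_i\le\alpha s_j$ does unwind to a boolean combination of coset conditions and strict linear inequalities in $\alpha$, with equalities turned into coset conditions via condition (3) of Definition~\ref{de:NumericalData}.

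\textbf{The gap is Step 3.} It carries the entire content of the proposition, and you leave it as a heuristic. A universal quantifier over $\alpha\in\ZZ^n$ applied to such formulas does not in general give semi-algebraic sets. Already $\{(c,c')\in\RR^2 \mid \forall k\in\ZZ\colon k<c\Rightarrow k<c'\}$, i.e.\ $\lceil c\rceil\le\lceil c'\rceil$, is not semi-algebraic. So semi-algebraicity must come from the specific structure.

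Your proposed trade of lattice points for real points is also not correct as stated, even in the dense case. Take $\Pi=\ZZ^2$ acting on $S=\ZZ^2\times\{1,2\}$, and $\tau=\tau'$ with root homomorphism $f(\alpha)=\alpha_1+\sqrt2\,\alpha_2$. Give it two leaf children $S_{y_i}=\{(0,i)\}$ with $g$-values $0$ and $\sqrt3$, and set $p=p'$. Then ${<}\Rightarrow{\leq'}$ certainly holds.
\begin{itemize}
\item For $s=(0,1)$, $t=(0,2)$ there is no equality case, since the children differ, so $\alpha t\le s$ is just $f(\alpha)<-\sqrt3$.
\item Writing $s<\alpha t$ as $\neg(\alpha t\le s)$, as you propose, one resulting condition is $\forall\alpha\in\ZZ^2\colon f(\alpha)\ge-\sqrt3\Rightarrow f(\alpha)>-\sqrt3$.
\item This holds only because $-\sqrt3\notin\im(f)$, which is exactly condition (3).
\item Its real counterpart fails on the hyperplane $f=-\sqrt3$.
\end{itemize}
Lattice points avoid boundary values that real points hit. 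In the discrete case the real replacement is outright wrong and must be replaced by finitely many instances. Once cosets at several levels and forms of different ranks from two \emph{different} trees interact, you would need an actual argument for when each quantifier can be traded. None is given.

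\textbf{The missing idea.} The paper never eliminates the lattice quantifier head-on. It uses the implication itself to force compatibility of the two trees, and then inducts.
\begin{itemize}
\item It first reduces to minimal preotrees (Proposition~\ref{prop:Minimalisation} plus quantifier elimination).
\item Uniqueness in Proposition~\ref{prop:GroupSetPreorder} then forces $f_r$ and $f_{r'}$ to be positive multiples of each other. So the groups at the children must agree, or else $\Gamma=\emptyset$.
\item If $\im(f_r)$ is dense, $g_r-g_{r'}$ must be constant. After an equivalence move the children correspond bijectively, and one recurses on each $S_x$.
\item If the rank is $\le 1$, condition (4) reduces the root comparison to finitely many $v\in\Pi_r$. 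Minimality ($f_x\neq 0$ at the children) shows that $g_{r'}$ must be constant on each $S_x$, which gives a surjection $x\mapsto x'$ of children.
\item To continue the induction in that case, one builds an auxiliary preotree $\tilde\tau_{x'}$ for $S_{x'}$: a new root with $f=0$ and $g=g_r|_{S_{x'}}$, carrying translated copies of the subtrees $\tau_x$.
\item The case ${<}\Rightarrow{\leq'}$ needs a different auxiliary tree built from $\tau'$. The remaining strict variants use contraposition together with Proposition~\ref{prop:Opposite}.
\end{itemize}
Your last paragraph points toward the first of these steps. But ``the comparison recurses into children'' is exactly where the work lies, because in the discrete case the children of $r$ and $r'$ need not correspond.
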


We will write the requirement in the proposition more succinctly as
${\leq} \Rightarrow {\leq'}$; the remaining three versions then are ${\leq}
\Rightarrow {<'}$, ${<} \Rightarrow {\leq'}$, and ${<} \Rightarrow {<'}$.

\begin{proof}
{\bf Proof for ${\leq} \Rightarrow {<'}$.} This implication holds if
and only if $S=\emptyset$, so $\Gamma=\emptyset$ when $S \neq \emptyset$
and $\Gamma=\Sigma \times \Sigma'$ when $S=\emptyset$.

{\bf Proof for ${\leq} \Rightarrow {\leq'}$.} By contraposition,
this implication is equivalent to the
implication ${>'} \Rightarrow{>}$, so by Proposition~\ref{prop:Opposite}
the proof below for ${\leq} \Rightarrow {\leq'}$ also settles
semi-algebraicity  
for the implication ${<'} \Rightarrow {<}$, and hence for ${<} \Rightarrow
{<'}$ by swapping the roles of $\leq$ and $\leq'$. 

We first argue that we may assume that
$\tau$ and $\tau'$ are both minimal. Indeed, let $\taumin$ and $\taumin'$
be minimalisations of $\tau$ and $\tau'$, respectively, and let $\Delta
\subseteq \Sigma \times \Sigmamin$ and $\Delta' \subseteq \Sigma' \times
\Sigmamin'$ be as in Proposition~\ref{prop:Minimalisation}. Let $\Gammamin
\subseteq \Sigmamin \times \Sigmamin$ be the locus corresponding to
pairs $(\leq,\leq')$ with ${\leq} \Rightarrow {\leq'}$. Furthermore, let
\[ \pi:\Delta \to \Sigma,\ \pimin:\Delta \to \Sigmamin,\ \pi':\Delta'\to
\Sigma',\text{ and }  \pimin':\Delta' \to \Sigmamin'  \] be the projections. Consider
the following diagram, where the horizontal arrows are inclusions:
\[
\xymatrix{
\Gamma \ar[r] & \Sigma \times \Sigma' & \\
& \Delta \times \Delta' \ar[u]_{\pi \times \pi'} \ar[r] \ar[d]^{\pimin
\times \pimin'} \ar[r] & \Sigma \times \Sigmamin \times \Sigma' \times
\Sigmamin'\\
\Gammamin \ar[r] & \Sigmamin \times \Sigmamin'. &
}
\]
Now we have $\Gamma=(\pi \times \pi') (\pimin \times
\pimin')^{-1}(\Gammamin)$: the inclusion $\supseteq$ just says that
replacing $\taumin,\taumin'$ and numerical data defining preorders that
satisfy ${\leq} \Rightarrow {\leq'}$ by $\tau,\tau'$ and numerical data
defining the {\em same} preorders gives a point in $\Gamma$. And the
inclusion $\subseteq$ follows from the fact that $\pi$ and $\pi'$ are
surjective by Proposition~\ref{prop:Minimalisation}. Now the
semi-algebraicity 
of $\Gamma$ follows from that of $\Gammamin,\Delta,$ and $\Delta'$ via
real quantifier elimination. 

Hence from now on, we assume that $\tau$ and $\tau'$ are minimal. 
We write $(f_x,g_x)_x$ for the numerical data for $\tau$ and
$(f_{x'},g_{x'})_{x'}$ for that for $\tau'$. Similarly, we write the
combinatorial data for $\tau$ as $(\Pi_x,S_x)_x$ and that
for $\tau'$ as $(\Pi_{x'},S_{x'})_{x'}$.

We proceed by induction on $\tau'$. Let $r'$ be its root. If $\tau'$
consists of $r'$ only, then $\Sigma'$ is a single point corresponding
to the trivial preorder on $S$, and $\Gamma=\Sigma \times \Sigma'$.

Otherwise, since $\tau'$ is minimal, $g_{r'}$ is non-constant.
The implications $s \leq t \Rightarrow s \leq' t \Rightarrow g_{r'}(s)
\leq g_{r'}(t)$ and the fact that $g_{r'}$ is non-constant imply that
$\tau$ cannot be a single vertex either. Since $\tau$ is also minimal,
it follows that its root $r$ has $g_r$ non-constant. 
By the uniqueness in Proposition~\ref{prop:GroupSetPreorder}, we find that 
$f_r$ is a positive scalar multiple of $f_{r'}$.
Of course, this can happen only if the subgroup $\Pi_{x'}$ for each child 
$x'$ of $r'$ equals the subgroup $\Pi_x$ for each child $x$ of $r$. If
$\Pi_x \neq \Pi_{x'}$, then $\Gamma$ is empty, and otherwise the condition
on $f_{r}$ and $f_{r'}$ is semi-algebraic. In what follows, we assume
that $f_r$ has been scaled such that $f_r=f_{r'}$ (and of course $g_r$
has been scaled accordingly).

Now if $\Pi_r / \ker(f_{r})$ has rank $\geq 2$, then $\im(f_{r})$ is
dense and hence, by Proposition~\ref{prop:ChoicesForG}, $g_r$ must
equal $g_{r'}$ plus a constant. This is a semi-algebraic condition on the
pair $(g_r,g_{r'})$. After replacing $\tau$ by an equivalent preotree
(adapting the numerical data as in Definition~\ref{de:Equivalent}),
we have a bijection $x \mapsto x'$ between the children of $r$ in $\tau$
and those of $r'$ in $\tau'$ which satisfies $S_x=S_{x'}$. 
Now ${\leq} \Rightarrow {\leq'}$
holds if and only if it holds for the restrictions of $\leq$ and $\leq'$
to each of the finitely many sets $S_x=S_{x'}$. Since these
restrictions are given by smaller preotrees, with inherited numerical
data, we are done by induction.

Assume next that $\Pi_r /\ker(f_r)$ has rank $\leq 1$. A necessary
condition for ${\leq} \Rightarrow {\leq'}$ is given by the formula $\forall
s,t \in S:g_r(s)<g_r(t) \Rightarrow g_{r'}(s) \leq g_{r'}(t)$.
Fix representatives $s_1,\ldots,s_k$ of the $\Pi_r$-orbits in $S$.
Then the formula translates to:
\[ \forall i,j \in [k] \ \forall v \in \Pi_r: 
g_r(s_i)-g_r(s_j) < f_r(v) \Rightarrow g_{r'}(s_i) - g_{r'}(s_j) \leq f_r(v). \]
When $f_r=0$, this is a semi-algebraic condition on the pair
$(g_r,g_{r'})$.  And when $\Pi_r/\ker(f_r)$ has rank $1$ and is
generated by $u_0+\ker(f_r)$, the fact that, for children $x,x'$ of
$r,r'$, respectively, the values $g_r(S_x)$
and $g_{r'}(S_{x'})$ are in $(0,|f_r(u_0)|)$, implies that the values
$g_r(s_i),g_r(s_j),g_{r'}(s_i),g_{r'}(s_j)$ are in some (potentially
larger) interval with finitely many elements of $\im(f_r)$ in it, so that we
may replace $\forall v \in \Pi_r$ by $\forall v \in U$, where $U$ is some
finite subset of $\Pi_r$. Hence the condition above is again semi-algebraic,
now on the triple $(f_r,g_r,g_{r'})$.

We claim next that, for ${\leq} \Rightarrow {\leq'}$ to hold, for any
child $x$ of $r$, we must have $g_{r'}(s)=g_{r'}(t)$ for all $s,t \in
S_x$. Indeed, suppose that $s,t$ do not satisfy this condition, and assume, without loss of
generality, that $g_{r'}(s)<g_{r'}(t)$. Then $s <' t$ and therefore
also $s < t$. Since $g_r(s)=g_r(t)$, we have $s<t$ in the preorder on
$S_x$ defined by the sub-preotree of $\tau$ rooted at $x$. In particular,
$x$ is not a leaf, and since $\tau$ is minimal, each child $y$ of
$x$ satisfies $\Pi_y \subsetneq \Pi_x(=\ker(f_r))$. This implies that
$f_x:\Pi_x \to \RR$ is nonzero, so there exists a $u \in \Pi_x$ with
$f_x(u)+g_x(s)> g_x(t)$. This implies that $us>t$, while, on the other
hand, $g_{r'}(us)=f_{r}(u)+g_{r'}(s)=0+g_{r'}(s)<g_{r'}(t)$, so that
$us <' t$, a contradiction to ${\leq} \Rightarrow {\leq'}$. This proves the claim.

Hence, a second necessary condition for ${\leq} \Rightarrow {\leq'}$ is that
the partition $S=\bigsqcup_{x'} \Pi_r S_{x'}$ is a coarsening of the
partition $S=\bigsqcup_{x} \Pi_r S_x$, so we have a surjective map $x
\mapsto x'$ from the children of $r$ in $\tau$ to the children of $r'$
in $\tau'$ with the property that $S_x \subseteq \Pi_r S_{x'}$. If
this combinatorial condition is not fulfilled, then $\Gamma$ is
empty. So we may assume that it is fulfilled. 

For each child $x'$ of $r'$ we construct from $\tau$ (and its numerical data) a preotree
$\tilde{\tau}_{x'}$ for $S_{x'}$ (with numerical data) for checking the implication  
${\leq} \Rightarrow {\leq'}$ at the sub-preotree of $\tau'$ rooted at $x'$.
If there is a unique $x$
that maps to $x'$, so that $\Pi_r S_x=\Pi_r S_{x'}$, then first pass
to a preotree equivalent to $\tau$ that has $S_x=S_{x'}$ (adapting its
numerical data as in Definition~\ref{de:Equivalent}) and then take the
sub-preotree rooted at $x$. Otherwise, there are at least two such
$x$, and we make a new (automatically minimal) preotree
$\tilde{\tau}_{x'}$
for $S_{x'}$ by choosing a root $\tilde{r}$ with
$\Pi_{\tilde{r}}:=\Pi_x$ and $S_{\tilde{r}}:=S_{x'}$ and
$f_{\tilde{r}}:=0$ and
$g_{\tilde{r}}:=g_r|_{S_{x'}}$ (a non-constant function that takes
finitely many values), with children $\tilde{x}$ in bijection
to the children $x$ of $r$ in $\tau$ that map to $x'$, and attaching
at $\tilde{x}$ a copy of the subtree $\tau_x$ of $\tau$ of which $x$ is the root.
For each such $x$,
choose an element $w_{x} \in \Pi_r$ such that $w_x S_x \subseteq
S_{x'}$ and for each vertex $\tilde{y}$ in the subtree of $\tilde{\tau}_{x'}$ rooted
at $\tilde{x}$, let $y$ be the corresponding vertex in $\tau_x$ and set $S_{\tilde{y}}:=w_x S_y$. The numerical data consists
of $f_{\tilde{y}}:=f_y$ and $g_{\tilde{y}}:=g_y \circ w_x^{-1}$.

Now the implication ${\leq} \Rightarrow {\leq'}$ is equivalent to the
necessary semi-algebraic conditions on $(f_r,f_{r'},g_r,g_{r'})$ derived
above, plus the additional condition that for each child $x'$ of $r'$
in $\tau'$, the implication ${\leq} \Rightarrow {\leq'}$ holds for the
preotree $\tilde{\tau}_{x'}$ and the sub-preotree of $\tau'$ rooted at
$x'$, each equipped with its numerical data. By induction, we are done. 

{\bf Proof for ${<} \Rightarrow {\leq'}$.} Note
that this implication is invariant under contraposition followed by
passing to the dual order. We will exploit this symmetry below. 

By the same reasoning as above, we may assume that $\tau$ and $\tau'$ are
minimal, and again we proceed by induction. This time, if either $\tau$ or $\tau'$ consists of a single
vertex, then $\Gamma=\Sigma \times \Sigma'$, so we may assume that
neither does. Let $r,r'$ be the roots of $\tau,\tau'$. 
Since $\tau, \tau'$ are minimal and do not consist of a single vertex, $g_r,g_{r'}$ are
non-constant. Assume for a moment that $f_{r'}$ is nonzero. We claim that
${<} \Rightarrow {\leq'}$ implies that $f_{r}$ is a positive scalar multiple
of $f_{r'}$. Indeed, if not, then there is a $u \in \Pi_r$ such that $f_r(u)
\leq 0$ and $f_{r'}(u)>0$. Now pick $s,t \in S$ with $g_r(s)<g_r(t)$.
Then for $n \gg 0$ we have $g_{r'}(u^n s)=nf_{r'}(u)+g_{r'}(s)>g_{r'}(t)$,
so that $u^n s >' t$, while, on the other hand, $g_r(u^n
s)=nf_r(u)+g_r(s) <g_r(t)$ so that
$u^n s < t$, a contradiction. By symmetry, we conclude that, conversely,
if $f_r$ is nonzero, then $f_{r'}$ is a positive scalar multiple of
$f_r$. This also holds when $f_r$ and $f_{r'}$ are both zero.
For $f_r$ and $f_{r'}$ to be positive scalar multiples of
each other, we must have $\Pi_x=\Pi_{x'}$ for the children $x$ of $r$
and $x'$ of $r'$. If this requirement is not fulfilled, then $\Gamma$ is
empty. If it is, then the condition that $f_r$ and $f_{r'}$ are positive
scalar multiples of each other is our first semi-algebraic condition.
We may now assume that they are, and scale $f_{r'}$ (and $g_{r'}$) such 
that $f_r=f_{r'}$.

Assume first that $\Pi_r/\Pi_x$ has rank $\geq 2$, so that $f_r$ has a dense
image. We claim that ${<} \Rightarrow {\leq'}$ implies that $g_r$ and $g_{r'}$
are the same up to an additive constant. Indeed, otherwise, after adding
a constant to $g_{r'}$ there exist $s,t \in S$ with $g_r(s)=g_{r'}(s)$ but
$g_r(t)<g_{r'}(t)$. By the density of $\im(f_r)$, there exists a $u$ such that
$f_r(u)+g_r(t)<g_r(s)$ while $f_r(u)+g_{r'}(t)>g_r(s)=g_{r'}(s)$. Then
$ut<s$ but $ut>'s$, a contradiction. So from now on we may assume that
$g_r=g_{r'}$. Then after replacing $\tau$ by an equivalent preotree
(adapting the numerical data as in Definition~\ref{de:Equivalent}), we
have a bijection $x \mapsto x'$ between the children of $r$ in $\tau$
and those of $r'$ in $\tau'$ which satisfies $S_x=S_{x'}$. Now ${<} \Rightarrow {\leq'}$
holds if and only if it holds for the restrictions of $\leq$ and $\leq'$ to
each of the finitely many sets $S_x=S_{x'}$, so we are done by
induction. 

Now assume that $\Pi_r /\ker(f_r)$ has rank $\leq 1$. For ${<} \Rightarrow
{\leq'}$ to hold, we must clearly have
$\forall s,t \in S: g_r(s)<g_r(t) \Rightarrow g_{r'}(s) \leq g_{r'}(t)$. As discussed in
the previous case, this implication is a semi-algebraic condition on
the triple $(f_r,g_r,g_{r'})$. 

So far, the proof is quite similar to that in the previous case; from
now on, they differ a bit more. We must ensure that, on the one hand, if
$s,t \in S$ satisfy $g_r(s)=g_r(t)$ and $s<t$, then $s \leq' t$ holds, and, 
on the other hand, that if $s,t \in S$ satisfy $g_{r'}(s)=g_{r'}(t)$ and
$s>'t$, then $s \geq t$ holds. By symmetry, it suffices to prove that the
first of these implications imposes semi-algebraic constraints.  This is
achieved as follows: for each child $x$ of $r$ in $\tau$, construct a preotree
$\tilde{\tau}_x$ (with numerical data) for $S_x$ from $\tau'$. This is 
similar to the construction of $\tilde{\tau}_{x'}$ above,
but a little more complicated: start with a root $\tilde{r}$
with $S_{\tilde{r}}:=S_x$ and $\Pi_{\tilde{r}}:=\Pi_x$. Consider the
collection 
\[ \{(u S_{x'}) \cap S_x \mid u \in \Pi_r \text{ and } x' \text{ child
of $r'$ in $\tau'$}\} \setminus \{\emptyset\}; \]
it is easy to see that this is a partition of $S_x$. For each part
in this partition, we introduce a child $\tilde{x}$ of $\tilde{r}$ in
$\tilde{\tau}_x$ and set $S_{\tilde{x}}$ equal to that part.
We further
set $f_{\tilde{r}}:=0$ and let $g_{\tilde{r}}$ be the restriction of
$g_{r'}$ to $S_x$. The fact that the values of $g_{r'}$ on sets
$S_{x'},S_{x''}$ for distinct children $x', x''$ of $r'$ are not in the same
$\ker(f_{r'})$-coset implies that $g_{\tilde{r}}$ takes distinct
values on the sets $S_{\tilde{x}}$ as $\tilde{x}$ ranges over the
children 
of $\tilde{r}$.
Now to every child $\tilde{x}$ we choose $u \in \Pi_{r}$ and the
(unique) child $x'$ of $r'$ in $\tau'$ so that $S_{\tilde{x}}=(u
S_{x'}) \cap S_x$, and we attach to $\tilde{x}$ the preotree
for $S_{\tilde{x}}$
obtained from the sub-preotree of $\tau'$ rooted at $x'$ by replacing
all $S_{z'}$ by $(u S_{z'}) \cap S_x$, equipped with the same $f_{z'}$
and with $g_{z'}$ replaced by $g_{z'} \circ u^{-1}$. 

Note that $\tilde{\tau}_x$ may not be minimal, but we may assume it to
be minimal by repeating the construction at the beginning of the proof.
Now the condition is that ${<} \Rightarrow {\leq}$ holds for the sub-preotree
of $\tau$ rooted at $x$ and the preotree $\tilde{\tau}_x$, each with its
numerical data; and this should hold for every $x$. We are done by
induction. 
\end{proof}

%%%%%%%%%%%%%%%%%%%%%%%%%%%%%%%%%%%%%%%%%%%

\subsection{Pulling back preotrees}

The following proposition shows that preotrees can be pulled back
along homomorphisms and equivariant maps.

\begin{prop} \label{prop:PullBack}
Let $\Pi,\Pi'$ be finitely generated abelian groups, $S,S'$ a finitely
generated $\Pi$-set and a finitely generated $\Pi'$-set, respectively, $\rho:\Pi \to \Pi'$ a
homomorphism, $\phi:S \to S'$ a map satisfying $\phi(us)=\rho(u)\phi(s)$
for all $u \in \Pi$ and $s \in S$, and $\tau'$ a preotree for
$(\Pi',S')$. Let $\Sigma'$ be the space of numerical data for $\tau'$.
Then there is a preotree $\tau$ for $(\Pi,S)$ with space $\Sigma$
of numerical data and a surjective map $x \mapsto x'$ from vertices of
$\tau$ to vertices of $\tau'$ with the following properties:
\begin{enumerate}
\item the image $r'$ of the root $r$ of $\tau$ is the root of $\tau'$,
and if $y$ is a child of $x$, then $y'$ is a child of $x'$;
\item $\Pi_x=\rho^{-1}(\Pi'_{x'})$ for all $x$; 
\item for every $x$ there is a $w_x \in \Pi'$ such that
$w_{x} \phi(S_x) \subseteq S_{x'}$; 
\item the {\em pull-back map}
\[ p':=(f'_{x'},g'_{x'})_{x'} \mapsto 
(f_x,g_x)_x:=(f'_{x'} \circ \rho,g'_{x'}\circ w_x \circ \phi)_x=:p \]
maps a $\QQ$-semi-algebraic subset of $\Sigma'$ into $\Sigma$ and 
has the property that the preorder $\leq$ on $S$ determined by $p$
is the pull-back along $\phi$ of the preorder 
$\leq'$ on $S'$ determined by $p'$.
\end{enumerate}
In fact, there are countably many such structures $(\tau,(w_x)_x)$,
and the domains of their pull-back maps together cover all of $\Sigma'$.
\end{prop}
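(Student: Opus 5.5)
We build $\tau$ top-down, recursively along $\tau'$, carrying for each new vertex $x$ an image vertex $x'$ and a translation $w_x \in \Pi'$. At the root we set $r \mapsto r'$, $\Pi_r:=\Pi=\rho^{-1}(\Pi')$, $S_r:=S$ and $w_r:=1$. Suppose $x \mapsto x'$, $\Pi_x=\rho^{-1}(\Pi'_{x'})$ and $w_x\phi(S_x) \subseteq S_{x'}$ are already in place. If $x'$ is a leaf, then $x$ is a leaf. Otherwise the formula in (4) forces $f_x:=f'_{x'}\circ\rho|_{\Pi_x}$, and hence
\[ \ker(f_x)=\rho^{-1}(\ker f'_{x'})\cap \Pi_x=\rho^{-1}(\Pi'_{y'}) \]
for any child $y'$ of $x'$. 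This is a saturated subgroup of $\Pi_x$, because $\rho$ induces an injection $\Pi_x/\ker(f_x) \hookrightarrow \Pi'_{x'}/\Pi'_{y'}$, which is torsion-free. It also contains all stabilisers of elements of $S_x$, since $us=s$ implies $\rho(u)$ stabilises $w_x\phi(s)$, and so $\rho(u) \in \Pi'_{y'}$. So condition (2) in the definition of a preotree holds, and (2) of the proposition holds for the children.

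For the children themselves, we take each $\Pi_x$-orbit $O$ in $S_x$ and a representative $s\in O$. The element $w_x\phi(s)$ lies in a $\Pi'_{x'}$-orbit of $S_{x'}$, so there are $v\in\Pi'_{x'}$ and a unique child $y'$ with $vw_x\phi(s)\in S_{y'}$. We then create a child $y$ with $y\mapsto y'$, $w_y:=vw_x$, $\Pi_y:=\ker(f_x)$ and $S_y:=\{t\in O \mid w_y\phi(t)\in S_{y'}\}$. Using $\ker(f_x)=\rho^{-1}(\Pi'_{y'})$ and Lemma~\ref{lm:Stabiliser}, we check that $S_y$ is a single $\Pi_y$-orbit meeting $O$. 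With this choice, $g_x:=g'_{x'}\circ w_x\circ\phi$ is constant on $S_y$, with value $g'_{x'}(S_{y'})-f'_{x'}(v)$. The identity $g_x(us)=f_x(u)+g_x(s)$ is immediate from the same identity for $g'_{x'}$ and from $\phi(us)=\rho(u)\phi(s)$. Rational independence of $f_x$ on a basis of $\Pi_x/\Pi_y$ holds because the values are $\ZZ$-combinations of the $\QQ$-independent values of $f'_{x'}$, taken through an injective map of lattices.

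The delicate part is the remaining irrationality conditions (3) and (4) of Definition~\ref{de:NumericalData}, together with the degenerate case $f_x=0$ (which occurs when $\rho(\Pi_x)\subseteq\Pi'_{y'}$ while $f'_{x'}\neq 0$). Condition (3) asks that the values $g_x(S_y)$ for different children lie in distinct $\im(f_x)$-cosets. For two orbits $O_1,O_2$, the difference of these values equals
\[ g'_{x'}(S_{y'_1})-g'_{x'}(S_{y'_2})+f'_{x'}(\text{element of }\Pi'_{x'}). \]
When $y'_1\neq y'_2$ this is not in $\im(f'_{x'})\supseteq\im(f_x)$. When $y'_1=y'_2$ it lies in $\im(f_x)$ exactly when the two orbits should be merged into one child. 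Which case occurs is decided by a rational linear equation on $p'$ when $f_x=0$; when $f_x\neq0$ it is purely combinatorial and can be arranged by the choice of the $v$'s. Condition (4), in the rank-$1$ case, asks that $g_x(S_y)\in(0,|f_x(u_0)|)$. We arrange this by multiplying $w_y$ by a suitable power of a lift of $u_0$. Which power works depends on $p'$, but the set of $p'$ for which a fixed power works is given by finitely many strict polynomial inequalities over $\QQ$. Hence every fixed combinatorial choice (merging pattern and translations $w_y$) has a $\QQ$-semi-algebraic domain in $\Sigma'$ on which the resulting $p$ lies in $\Sigma$. These choices form a countable family, since each $w_y$ ranges over the countable group $\Pi'$ and the tree has bounded size, and every $p'$ is admissible for at least one of them. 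This gives the final covering statement. Surjectivity of $x\mapsto x'$ is not automatic, since a child $y'$ might receive no orbit. We repair this by adding children with $S_y=\emptyset$, which the definition of a preotree allows.

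Finally, we prove that $(\tau,p)$ defines $\phi^*(\leq')$ by induction on the height of $\tau'$, following the recursive definition of the preorder. Since $g_r(s)=g'_{r'}(\phi(s))$, the comparison at the root agrees. When the values are equal, we translate by $u\in\Pi_r$ into a child $S_y$; then $\rho(u)$, composed with the fixed $w_y$, brings $\phi(us),\phi(ut)$ into $S_{y'}$ by the same element. The inductive hypothesis for the sub-preotrees rooted at $y$ and $y'$ then finishes the argument, using that the pulled-back data on the subtree is exactly the data $(f'_{z'}\circ\rho,\,g'_{z'}\circ w_z\circ\phi)$. The main obstacle is the bookkeeping for conditions (3) and (4), and the merging when $f_x=0$.
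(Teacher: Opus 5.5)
Your construction is essentially the paper's. It recurses along $\tau'$ with $f_x=f'_{x'}\circ\rho$, chooses one $\Pi_y$-orbit in each $\Pi_x$-orbit, merges orbits exactly when their $g_x$-values differ by an element of $\im(f_x)$, and finishes with an induction identifying the preorder. (The merging is combinatorial also when $f_x=0$: since $\ker f'_{x'}=\Pi'_{y'}$, the equation $f'_{x'}(v_2v_1^{-1})=0$ holds either on all of $\Sigma'$ or nowhere.)

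The gap is the covering claim ``every $p'$ is admissible for at least one of them''. You assert it without proof, and your family does not satisfy it. Translating $S_y$ by $u_0^k$ (which is what your change of $w_y$ amounts to) only moves $g_x(S_y)$ inside its $\im(f_x)$-coset, so it cannot help when $g_x(S_y)\in\im(f_x)$. If $\rk(\Pi'_{x'}/\Pi'_{y'})\geq 2$ while $\rk(\Pi_x/\Pi_y)=1$, the constraints defining $\Sigma'$ do not rule this out. For instance, let $\Pi=S=\ZZ$, $\Pi'=S'=\ZZ^2$, $\rho=\phi\colon n\mapsto(n,0)$, and let $\tau'$ be the root $(\ZZ^2,\ZZ^2)$ with a single leaf child $x'$ labelled $(\{0\},\{(0,0)\})$. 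Then $f'_{r'}(a,b)=\alpha a+\beta b$ and $g'_{r'}(a,b)=\alpha a+\beta b+c$, with $\alpha,\beta$ linearly independent over $\QQ$ and $c\in\RR$ arbitrary. Any pull-back has $f_r(n)=\alpha n$ and a child with $S_x=\{n_0\}$, so condition (4) requires $g_r(n_0)\in(0,|\alpha|)$. With your choice $w_r=1$, i.e.\ $(0,0)$, this value is $\alpha n_0+c$, which for $c=0$ fails for every $n_0$.

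The missing idea is to use the freedom in the $w_x$ themselves. At a non-root vertex, $w_x$ is determined only modulo $\Pi'_{x'}$, and at the root property (3) of the statement allows any $w_r\in\Pi'$. Replacing $w_x$ by $aw_x$ with $a\in\Pi'_{x'}$ keeps $w_x\phi(S_x)\subseteq S'_{x'}$ and leaves the children and their $w_y$ unchanged. It adds the constant $f'_{x'}(a)$ to $g_x$, hence to every value $g_x(S_y)$. For a given child $y$, the set of $a$ with $g_x(S_y)+f'_{x'}(a)\in\im(f_x)$ is empty or a coset of $H:=\rho(\Pi_x)\Pi'_{y'}$. Since $H/\Pi'_{y'}\cong\Pi_x/\Pi_y$ has rank $1<\rk(\Pi'_{x'}/\Pi'_{y'})$, $H$ has infinite index in $\Pi'_{x'}$, so finitely many of its cosets cannot cover $\Pi'_{x'}$. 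Choosing $a$ outside them and then translating by powers of $u_0$ gives condition (4), on a $\QQ$-semi-algebraic domain as you say. (In the example, $w_r=(0,b)$ gives $g_r(n_0)=\alpha n_0+\beta b+c$, and $\beta b+c\in\ZZ\alpha$ for at most one $b$.)

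When $\rk(\Pi'_{x'}/\Pi'_{y'})=1$ no such step is needed. Condition (4) for $\tau'$ puts $g'_{x'}(S'_{y'})$ strictly between $0$ and a generator of $\im(f'_{x'})$, whence $g_x(S_y)\notin\im(f'_{x'})\supseteq\im(f_x)$.

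The paper's proof settles this point with the same kind of one-line claim: it takes $w_r$ to be the identity and varies only the choice of $\Pi_x$-orbits. The example above shows that it needs the same supplement.
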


We will call $\tau$ a {\em pull-back} of $\tau'$ (along $\rho$ and
$\phi$).

\begin{proof}
We construct $\tau$ recursively as follows. We start with a root $r$ 
corresponding to the root $r'$ of $\tau'$. We set $S_r:=S$ and
$\Pi_r:=\Pi$ and let $w_r$ be the identity element in $\Pi'$. If
$\tau'$ consists of a root only, then we are done. 
Otherwise, for every child $x'$ of $r'$ in $\tau'$, we will construct
one or more children $x$ of $r$ in $\tau$, 
all of which will satisfy $\Pi_x:=\rho^{-1}(\Pi'_{x'})$. The
conditions on $\Pi'_{x'}$ in the definition of a preotree imply that $\Pi_x$ is a saturated subgroup 
and contains the stabiliser in $\Pi$
of every element in $S$. To decide how many children are needed, and to
determine their corresponding sets $S_x$, we proceed as follows:
in every $\Pi$-orbit in the set
$\phi^{-1}(\Pi' S'_{x'})$
we choose a single $\Pi_x$-orbit, and we do this in such a way that if $s,t$
are in two of these chosen $\Pi_x$-orbits, and we write $\phi(s)=us'$ and
$\phi(t)=vt'$ with $s',t' \in S'_{x'}$ and $u,v \in \Pi'$, then $uv^{-1}$
lies in the subgroup $\rho(\Pi) \cdot \Pi'_{x'}$ if and only if it lies in
the smaller subgroup $\Pi'_{x'}$. (Note that, if this is not satisfied,
it can be achieved by acting on the $\Pi_x$-orbit containing $s$ with a suitable
element of $\Pi$.) Now put the chosen orbits $\Pi_xs$ and $\Pi_xt$ in the same
set $S_{x}$ if and only if $uv^{-1}$ does indeed lie in $\Pi'_{x'}$.

For every child $x$ of $r$ thus constructed, we can now
choose an element $w_x \in \Pi'$ such that $w_x\phi(S_x) \subseteq S'_{x'}$. Indeed, if an element $w \in \Pi'$
satisfies $w\phi(s) \in S'_x$ for some $s \in S_x$, and if
$t$ is also in $S_{x}$, then write $\phi(s)=us'$ and $\phi(t)=vt'$
as above. Now both $s'$ and $wus'=w \phi(s)$ are in $S'_{x'}$,
and therefore $wu \in \Pi'_{x'}$ by Lemma~\ref{lm:Stabiliser}. Furthermore,
$uv^{-1} \in \Pi'_{x'}$, and therefore $wv \in \Pi'_{x'}$, so $w\phi(t)=wvt' \in S'_{x'}$.

We have a homomorphism $\Pi_{x} \to \Pi_{x'}$ (the restriction of
$\rho$) as well as a map $S_{x} \to S'_{x'}, s \mapsto w_x\phi(s)$
that is equivariant with respect to $\Pi_{x}$. Hence we can invoke
recursion and attach to $x$ a preotree for $(\Pi_x,S_x)$ obtained as
a pull-back of the sub-preotree of $\tau'$ rooted at $x'$ for the pair
$(\Pi'_{x'},S'_{x'})$; the only adaptation needed is to replace, for any
vertex $y$ in said pull-back, the element $w_y \in \Pi'_{x'}$ by $w_x w_y$.

This concludes the construction of $\tau$. Now let $(f'_{x'},g'_{x'})_{x'}
\in \Sigma'$, and let $(f_x,g_x)_{x}$ be its image under the
pull-back map in the proposition. It is straightforward to see
that this satisfies conditions (1)-(2) for numerical data from
Definition~\ref{de:NumericalData}; we now verify condition (3) at the
root (elsewhere it follows by induction). 

Thus let $x_1,x_2$ be distinct children of $r$, $x_1',x_2'$ their
images in $\tau'$, let $s \in S_{x_1}$ and $t \in S_{x_2}$, and choose
$u,v \in \Pi'$ such that $\phi(s)=us'$ and $\phi(t)=vt'$ with $s' \in
S'_{x_1'}$ and $t \in S'_{x_2'}$. If $x_1' \neq x_2'$, then we obtain as desired
\begin{align*} 
g_r(s)-g_r(t)&=g'_{r'}(\phi(s))-g'_{r'}(\phi(t))
= f'_{r'}(u) + g'_{r'}(s') - f'_{r'}(v)- g'_{r'}(t') \\
& \notin \im(f'_{r'}) \supseteq \im(f_r).
\end{align*}
Now assume that $x_1'=x_2'=:x'$. 
Since $x_1 \neq x_2$, we have $uv^{-1} \notin \Pi'_{x'}$, and,
by construction, this means that $uv^{-1} \notin \Pi'_{x'} \cdot
\rho(\Pi)$. Hence
\begin{align*} 
g_r(s)-g_r(t)&=g'_{r'}(\phi(s))-g'_{r'}(\phi(t))
= f'_{r'}(u) + g'_{r'}(s') - f'_{r'}(v)- g'_{r'}(t') \\
&= f'_{r'}(uv^{-1}) \notin \im(f'_{r'} \circ \rho)=\im(f_r),
\end{align*}
where we have used the fact that $\Pi'_{x'}=\ker(f'_{r'})$; this proves (3). 

Condition (4) for numerical data, however, is not automatically satisfied,
and this cannot be repaired: if $\Pi/\Pi_x$ has rank $1$, then this group
embeds into $\Pi'/\Pi'_{x'}$. But the latter group may have rank strictly
larger than $1$, in which case there are no constraints on the values
of $g'_{r'}$ on the sets $S'_{x'}$ that would imply constraints on the
values of $g_r$. However, the locus in $\Sigma'$ where the pull-back map
does yield numerical data for $\tau$ is semi-algebraic, and if we vary
over all possible choices of the $\Pi_x$-orbits as in the beginning of
the construction, these loci cover all of $\Sigma'$. 

Now let $\leq'$ be the preorder on $S'$ determined by
$(f'_{x'},g'_{x'})_{x'}$ and let $\leq$ be the pull-back of $\leq'$ to
$S$. By induction, we show that $\leq$ is indeed defined by the numerical
data $(f_x,g_x)_x$: if $\tau'$ is a single vertex, then $\leq'$ and $\leq$
are trivial, and the statement is immediate. Otherwise, for $s,t \in
S$, set $s':=\phi(s)$ and $t':=\phi(t)$. We then have $s \leq t$ if and
only if either $g_{r'}(s')<g_{r'}(t')$ or $g_{r'}(s')=g_{r'}(t')$
and then $w s' \leq' w t'$, where we may choose any $w \in \Pi'$ such
that $w s',wt'$ are in $S_{x'}$ for a child $x'$ or $r'$. The first
inequality is equivalent to $g_r(s)<g_r(t)$. And in the second case,
we have $g_r(s)=g_r(t)$, and then we can choose $v \in \Pi$ such that
$vs,vt \in S_x$ with $x$ a child of $r$ that maps to $x'$. Then we
have $w_xvs,w_xvt \in S'_{x'}$, and $s \leq t$ holds if and
only if $w_xvs \leq' w_xvt$ holds in the restriction of $\leq'$ to $S'_{x'}$,
and, by the induction hypothesis, this latter preorder on $S_x$ is indeed
defined by the numerical data obtained by applying the pull-back map.
\end{proof}

%%%%%%%%%%%%%%%%%%%%%%%%%%%%%%%%%%%%%%%%%%%

\subsection{Relating preotrees for a $\Pi$-set $S$ and a $\Pi'$-set $S'$}

\begin{cor} \label{cor:SemiAlgebraic}
Let $\Pi,\Pi',S,S',\rho:\Pi \to \Pi',\phi:S \to S',\tau$ be as in
Proposition~\ref{prop:PullBack}, and let $\tau,\tau'$ be preotrees
for $(\Pi,S)$ and $(\Pi',S')$, respectively. Let $\Sigma,\Sigma'$
be the spaces of numerical data for $\tau,\tau'$, respectively. Then
the locus $\Gamma \subseteq \Sigma \times \Sigma'$ corresponding to
preorders $\leq,\leq'$ with the property that $\forall s,t \in S:
s \leq t \Rightarrow \phi(s) \leq' \phi(t)$ is a countable union of
$\QQ$-semi-algebraic subsets. The same applies when $\leq$ is replaced
by $<$ and/or $\leq'$ is replaced by $<'$.
\end{cor}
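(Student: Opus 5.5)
The plan is to reduce the statement to Proposition~\ref{prop:SemiAlgebraic1} via the pull-back construction of Proposition~\ref{prop:PullBack}. The condition $\forall s,t \in S: s \leq t \Rightarrow \phi(s) \leq' \phi(t)$ says exactly that ${\leq} \Rightarrow {\phi^*(\leq')}$, where $\phi^*(\leq')$ is the pull-back of $\leq'$ along $\phi$, a preorder on the $\Pi$-set $S$. The same rewriting works for the three strict variants, because $\phi(s) <' \phi(t)$ holds if and only if $s$ is strictly smaller than $t$ in $\phi^*(\leq')$. So the locus $\Gamma$ is the set of pairs $(p,p')$ for which the preorder of $(\tau,p)$ implies the preorder $\phi^*(\leq'_{p'})$ on $S$, in the relevant one of the four senses.

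Next I invoke Proposition~\ref{prop:PullBack}. It yields countably many pull-back structures $(\tau_j,(w_x^{(j)})_x)$, indexed by $j \in J$, with the following properties. Each $\tau_j$ is a preotree for $(\Pi,S)$ with space of numerical data $\Sigma_j$. For each $j$ there is a $\QQ$-semi-algebraic domain $D_j \subseteq \Sigma'$ and a pull-back map $\beta_j: D_j \to \Sigma_j$. The preorder defined by $(\tau_j,\beta_j(p'))$ is $\phi^*(\leq'_{p'})$, and the $D_j$ cover $\Sigma'$. The map $\beta_j$ is given coordinatewise by $f'_{x'} \mapsto f'_{x'}\circ\rho$ and $g'_{x'} \mapsto g'_{x'}\circ w_x \circ \phi$. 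In the coordinates fixed before the definition of the irrational spaces, this means evaluating $f'_{x'}$ on the images of generators and $g'_{x'}$ on fixed elements. These values are integer linear combinations of the coordinates of $p'$, because $g'_{x'}(u s') = f'_{x'}(u)+g'_{x'}(s')$. Hence $\beta_j$ is the restriction of a $\QQ$-linear map, and its graph $G_j \subseteq \Sigma' \times \Sigma_j$ is $\QQ$-semi-algebraic.

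For each $j$, Proposition~\ref{prop:SemiAlgebraic1}, applied to $\tau$ and $\tau_j$, gives a $\QQ$-semi-algebraic locus $\Gamma_j \subseteq \Sigma \times \Sigma_j$ of pairs satisfying the implication in the required variant. I then set
\[ \Gamma^{(j)} := \{(p,p') \in \Sigma \times D_j \mid (p,\beta_j(p')) \in \Gamma_j\}, \]
which is the image of $(\Sigma \times G_j) \cap (\Gamma_j \times \Sigma')$ under a coordinate projection, after reordering factors. By real quantifier elimination over $\QQ$ (Tarski--Seidenberg), each $\Gamma^{(j)}$ is $\QQ$-semi-algebraic in the sense of Remark~\ref{re:Admissible}. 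Since the $D_j$ cover $\Sigma'$, we obtain $\Gamma = \bigcup_{j \in J} \Gamma^{(j)}$, a countable union.

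The main point to check is that the projection step stays within the irrational spaces. Concretely, the semi-algebraic sets from Proposition~\ref{prop:SemiAlgebraic1} are intersections with $\Sigma_j$ of $\QQ$-semi-algebraic subsets of $\RR^{N_j}$. I must make sure that $\beta_j(D_j)$ really lands in $\Sigma_j$; this is exactly how $D_j$ was defined. Then I can write $\Gamma^{(j)} = (\Sigma \times D_j) \cap \tilde\beta_j^{-1}(\tilde\Gamma_j)$, where $\tilde\beta_j$ is the ambient linear map and $\tilde\Gamma_j$ is an ambient semi-algebraic set with $\Gamma_j = \tilde\Gamma_j \cap (\Sigma \times \Sigma_j)$. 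A preimage under a rational linear map is semi-algebraic without any need for projection, so $\Gamma^{(j)}$ is semi-algebraic over $\QQ$. This is the step I would verify most carefully. Everything else is bookkeeping across the countably many pull-back structures.
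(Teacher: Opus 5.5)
Your proposal is correct and follows the paper's own proof. Like the paper, you pull $\tau'$ back along $\rho$ and $\phi$ to countably many preotrees $\tilde\tau$ for $(\Pi,S)$, apply Proposition~\ref{prop:SemiAlgebraic1} to $\tau$ and each $\tilde\tau$, take the preimage under the identity on $\Sigma$ times the pull-back map on its domain, and take the union over all choices. Your additional observation that the pull-back map is the restriction of a $\ZZ$-linear map makes explicit why that preimage is again $\QQ$-semi-algebraic, a step the paper leaves implicit.
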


\begin{proof}
Let $\tilde{\tau}$ be a pull-back of $\tau'$ along $\rho$ and $\phi$;
so $\tilde{\tau}$ is a preotree for $(\Pi,S)$. Let $\tilde{\Sigma}$
be the space of numerical data for $\tilde{\tau}$, and let $\Omega
\subseteq \Sigma'$ be the domain of the pull-back map $\Omega
\to \tilde{\Sigma}$. The locus $\Gamma_{\tilde{\tau}} \subseteq
\Sigma \times \tilde{\Sigma}$ parameterising pairs $(\leq,\leq')$ of
preorders on $S$ satisfying ${\leq} \Rightarrow {\leq'}$ is
semi-algebraic over $\QQ$ 
by Proposition~\ref{prop:SemiAlgebraic1}, and hence so is the preimage of
$\Gamma_{\tilde{\tau}}$ in $\Sigma \times \Omega \subseteq \Sigma
\times \Sigma'$ under the map that is
the identity on $\Sigma$ and the pull-back map on $\Omega$. The locus
in the corollary is the union of all these sets, over
all choices of $\tilde{\tau}$. 
\end{proof}

%%%%%%%%%%%%%%%%%%%%%%%%%%%%%%%%%%%%%%%%%%%

\subsection{Encoding inequalities as semi-algebraic constraints}

Recall that in Theorem~\ref{thm:Main} we want a single inequality $s<t$
to translate to semi-algebraic conditions on numerical data. In fact, 
we will need the following generalisation of this statement.

\begin{prop} \label{prop:SemiAlgebraic2}
Let $\tau$ be a preotree for $(\Pi,S)$ and let $\Sigma$ be the space
of numerical data for $\tau$. For $p \in \Sigma$, write $\leq^p$ for
the corresponding preorder on $S$. Let $\Pi_0 \subseteq \Pi$ be a
finitely generated submonoid of $\Pi$ and let $s,t \in S$. Then the
locus 
\[ \Gamma:=\{p \in \Sigma \mid \forall u \in \Pi_0: us \leq^p t\}
\]
is semi-algebraic over $\QQ$; and the same holds if we replace
$\leq^p$ by $<^p$. 
\end{prop}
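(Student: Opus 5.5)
I will prove this by induction on the size of $\tau$, following the recursive definition of $\leq^p$, after reducing to a minimal preotree. For the reduction I use Proposition~\ref{prop:Minimalisation}. If $\Gammamin\subseteq\Sigmamin$ is the locus for $\taumin$, then $\Gamma=\pi(\pimin^{-1}(\Gammamin))$, where $\pi,\pimin$ are the two projections of $\Delta$. This set is $\QQ$-semi-algebraic by quantifier elimination. The equality uses that $\pi$ is surjective and that corresponding points define the same preorder.

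If $\tau$ is a single vertex, then $\Gamma=\Sigma$ in the non-strict case and $\Gamma=\emptyset$ in the strict case. Otherwise, let $r$ be the root, with data $(f_r,g_r)$. Fix generators $u_1,\dots,u_l$ of $\Pi_0$. The condition $us\leq^p t$ for all $u\in\Pi_0$ forces $f_r(u)+g_r(s)\leq g_r(t)$ for all $u=\prod u_i^{a_i}$ with $a\in\ZZ_{\geq0}^l$, and this in turn forces $f_r(u_i)\leq 0$ for every $i$. I then split $\Pi_0$ according to the finitely generated face monoid $\Pi_0^{=}:=\{u\in\Pi_0 \mid f_r(u)=0\}$. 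Since $f_r(u_i)\leq0$ for all $i$, $\Pi_0^=$ is generated by those $u_i$ with $f_r(u_i)=0$. There are finitely many subsets $I\subseteq[l]$, so I can case-split on which $u_i$ satisfy $f_r(u_i)=0$. Each case is a semi-algebraic condition on $f_r$, because $f_r(u_i)$ is a rational linear form in the coordinates of $p$.

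In a fixed case, the elements $u$ with $f_r(u)<0$ form the set $\Pi_0\setminus\Pi_0^=$, and it suffices to control the supremum of $f_r$ over this set. There are two situations.

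\emph{Dense image.} If $\Pi_r/\ker f_r$ has rank $\geq2$, the values $f_r(u)$ for $u\notin\Pi_0^=$ can approach $0$ from below only in a controlled way. The strictly negative values are $\sum_{i\notin I}a_if_r(u_i)+\sum_{i\in I}0$ with some $a_i>0$ for $i\notin I$. Their maximum is $\max_{i\notin I}f_r(u_i)$, which is attained. Hence for $u\notin\Pi_0^=$ the requirement reduces to $f_r(u)+g_r(s)<g_r(t)$ (or $\leq$) for each single generator $u_i$, $i\notin I$. Any element of $\Pi_0\setminus\Pi_0^=$ has the form $u_iu'$ with $i\notin I$ and $f_r(u')\leq0$, so it satisfies $f_r(u_iu')\leq f_r(u_i)$; thus one generator suffices. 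These are finitely many polynomial (in fact linear) conditions.

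\emph{Discrete image.} If the rank is $\leq1$, the same argument applies verbatim, since it only used $f_r(u_i)<0$ for $i\notin I$.

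Remaining elements. For $u\in\Pi_0^=$ we have $g_r(us)=g_r(s)$. So either $g_r(s)<g_r(t)$, which is a semi-algebraic condition and settles everything, or $g_r(s)=g_r(t)$, in which case we must descend. In the descending case, pick $v\in\Pi_r$ with $vs\in S_y$ for a child $y$. Then $vt\in S_y$ as well, and $\Pi_0^=\subseteq\ker f_r=\Pi_y$. The condition becomes $\forall u\in\Pi_0^=: u(vs)\leq vt$ in the sub-preotree at $y$, and this is semi-algebraic by induction.

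The main obstacle is that $y$ and $v$ depend on $p$. In the discrete case, which child $S_y$ meets $\Pi_r s$, and which translate $v$ is needed, depend on $g_r$ only through finitely many possibilities. This is where condition (4) confines the values to $(0,|f_r(u_0)|)$, so that $v$ ranges over a finite set and the dependence can be handled by case distinctions. In the dense case the child $y$ is determined combinatorially by the orbit, and $v$ can be chosen independently of $p$ after fixing orbit representatives inside the $S_y$. Combining these with the finite disjunction over $I$ gives a $\QQ$-semi-algebraic description of $\Gamma$. The strict version is identical, with $<$ in the leaf case and at the root comparisons.
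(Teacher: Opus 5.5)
Your argument is correct and is essentially the paper's proof: at the root you impose $f_r(u)\le g_r(t)-g_r(s)$ on all of $\Pi_0$, which amounts to $f_r\le 0$ on the generators together with $g_r(s)\le g_r(t)$. In the case $g_r(s)=g_r(t)$ you then descend to the child $y$ with the finitely generated monoid $\Pi_0\cap\ker f_r=\Pi_0\cap\Pi_y$ and apply induction. The minimalisation step, the case split over $I$, the dense/discrete distinction and your ``main obstacle'' are all unnecessary: $\ker f_r=\Pi_y$ and the sets $S_y$ belong to the combinatorial data of $\tau$, so $I$, the child $y$, the translate $v$, and even whether $g_r(s)=g_r(t)$ holds are independent of $p$ (and condition (4) plays no role here).
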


\begin{proof}
We proceed by induction. Let $r$ be the root of $\tau$. If $\tau$ consists
of $r$ only, then $\Gamma=\Sigma$ (respectively, $\Gamma=\emptyset$
in the case of $<^p$). Otherwise, for both the statement with $\leq^p$
and the statement with $<^p$, we first impose the condition that
$g_r(us) \leq g_r(t)$ for all $u \in \Pi_0$. This is equivalent to
$f_r(u) \leq g_r(t)-g_r(s)$ for all $u \in \Pi_0$. To show that this
condition is semi-algebraic, we may choose a surjection 
$\ZZ^n \to \Pi$ and hence assume that $\Pi=\ZZ^n$. Let $A \in \ZZ^{n
\times m}$ be a matrix whose columns generate $\Pi_0 \subseteq \ZZ^n$ as an additive
submonoid. Then the condition is 
\[ \forall u \in \ZZ_{\geq 0}^m : f_r(Au) \leq g_r(t)-g_r(s). \]
We distinguish three cases: first, if $g_r(t)-g_r(s)<0$, then this
formula is false, since $u=0$ is a counterexample. Second, if
$g_r(t)-g_r(s)>0$, then this formula is equivalent to the formula 
\[ \forall u \in \ZZ_{\geq 0}^m : f_r(Au) \leq 0, \]
as can be seen by scaling. This latter formula is equivalent
to the same formula with $\ZZ_{\geq 0}$ replaced by $\RR_{\geq 0}$,
since any counterexample with coordinates in $\RR_{\geq 0}$ can be
approximated by a counterexample with coordinates in $\QQ_{\geq 0}$
and then scaled to one with coordinates in $\ZZ_{\geq 0}$. Hence the
formula above is a semi-algebraic condition on $f_r$ by real quantifier
elimination. 

The third case occurs when $g_r(t)-g_r(s)=0$. Then we first require the semi-algebraic
condition on $f_r$ from the previous case. Furthermore, we find $w
\in \Pi$ and a child $x$ of $r$ in $\tau$ such that $wt,ws \in S_x$,
and we have to impose precisely the condition that $uws \leq^{p'} wt$
(respectively, $uws <^{p'} wt$) for all $u
\in \Pi_0':=\ker(f_r) \cap \Pi_0=\Pi_x \cap \Pi_0$, where $p'$ is the
restriction of $p$ to the preotree for $(\Pi_x,S_x)$ rooted at $x$. Since $\Pi_0'$ is
a finitely generated submonoid of $\Pi_x$, this latter condition is a
semi-algebraic condition on $p$ by the induction hypothesis.
\end{proof}

For Theorem~\ref{thm:Main}, we also need that the map from numerical data to
preorders is continuous. 

\begin{prop} \label{prop:Continuous}
Let $\tau$ be a preotree for $(\Pi,S)$ and let $\Sigma$ be the space
of numerical data for $\tau$. Then the map $\Sigma \to \cP_\Pi(S)$ that
sends numerical data $p=(f_x,g_x)_x$ to the corresponding preorder
$\leq^p$ is continuous. 
\end{prop}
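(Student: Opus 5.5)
Since the topology on $\cP_\Pi(S)$ is the coarsest one in which all sets $\{s<t\}$ are closed, it suffices to show that for each fixed $s,t \in S$ the preimage
\[ \Gamma_{st}:=\{p \in \Sigma \mid s <^p t\} \]
is closed in $\Sigma$, with $\Sigma$ carrying the Euclidean subspace topology. Equivalently, the set $\{p \in \Sigma \mid t \leq^p s\}$ should be open in $\Sigma$. We will prove the latter by induction on the size of $\tau$, using the recursive definition of $\leq^p$. If $\tau$ is a single vertex, this set is all of $\Sigma$, which is open.

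For the inductive step, let $r$ be the root of $\tau$, and fix $p \in \Sigma$ with $t \leq^p s$. The first case is $g_r(t)<g_r(s)$. Fix orbit representatives $s_1,\ldots,s_k$ and write $s=as_i$ and $t=bs_j$ with $a,b \in \Pi$. Then $g_r(s)=f_r(a)+g_r(s_i)$, and the right-hand side is a linear function of the coordinates of $p$: the values of $f_r$ on generators of $\Pi_r/\Pi_y$ and the values $g_r(S_y)$. More precisely, we use the coordinates by choosing $w \in \Pi_r$ with $ws_i \in S_y$. Since $w$ is a fixed group element independent of $p$, this expression is linear in $p$. Hence the strict inequality $g_r(t)<g_r(s)$ is an open condition on $p$, and we are done in this case.

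The second case is $g_r(t)=g_r(s)$. Here I expect the main subtlety. Equality is a closed condition, not an open one, so we must show that every nearby $p'$ satisfies either $g'_r(t)<g'_r(s)$ or equality together with the recursive condition. The key observation is that, once we write $s=as_i$ and $t=bs_j$, the difference $g_r(s)-g_r(t)$ equals $f_r(ab^{-1}w_iw_j^{-1})+g_r(S_{y})-g_r(S_{y'})$ for the relevant children $y,y'$. By condition~(3) of Definition~\ref{de:NumericalData}, this difference can vanish only if $y=y'$, in which case it equals $f_r(c)$ for some fixed $c \in \Pi_r$. On $\Sigma$ the $\QQ$-linear independence built into $\Xi(n,m)$ shows that $f_r(c)=0$ holds if and only if $c \in \ker f_r=\Pi_y$, and this is a combinatorial condition independent of $p$. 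Hence $g^{p'}_r(t)=g^{p'}_r(s)$ holds for every $p' \in \Sigma$. The comparison then reduces, for all $p'$, to comparing $wt$ and $ws$ in the sub-preotree rooted at $y$, with a fixed $w \in \Pi_r$ and with $p'$ restricted to that sub-preotree. The restriction map to the numerical data of the sub-preotree is a coordinate projection, hence continuous, so the induction hypothesis finishes the argument.

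The step needing care is therefore the dichotomy in the second case: on the irrational space, equality $g_r(s)=g_r(t)$ is locally constant in $p$. This is exactly where removing the rational hyperplanes matters. Once that is established, the rest is bookkeeping with the fixed group elements used to move $s,t$ into the $S_y$.
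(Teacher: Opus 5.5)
Your proof is correct and follows the paper's argument: induction on $\tau$, using that on $\Sigma$ whether $g_r(s)=g_r(t)$ holds does not depend on $p$ (by conditions (2)--(3) of Definition~\ref{de:NumericalData}). So either the comparison reduces, via a coordinate projection, to a fixed pair in a sub-preotree, or it is a strict inequality between two linear functions of the root coordinates. The differences are cosmetic: you prove that the complement $\{t\le^p s\}$ is open and split cases pointwise rather than on the combinatorial condition up front, and with your convention $w_is_i\in S_y$ the factor in your difference formula should be $w_i^{-1}w_j$ rather than $w_iw_j^{-1}$, which is harmless.
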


\begin{proof}
We need to show that, for $s,t \in S$, the set 
$\Gamma:=\{p \mid s <^p t\}$ 
is closed in $\Sigma$. We proceed by induction on the preotree. Let $r$
be the root of $\tau$. If $\tau$ consists of $r$ only, then $\Gamma$
is empty. Assume that this is not the case. 

If there exist a child $x$ of $r$ and a $u \in \Pi$ with $us,ut \in S_x$,
then write $\Sigma=\Sigma' \times \Sigma''$, where $\Sigma'$ is the space
of numerical data for the sub-preotree $\tau'$ of $\tau$ rooted at $x$
and $\Sigma''$ is the space of numerical data corresponding
to vertices not in $\tau'$. 

Then $\Gamma=\Gamma' \times \Sigma''$,
where $\Gamma' \subseteq \Sigma'$ is the locus of numerical data $p'$
for $\tau'$ for which $us<^{p'} ut$ holds. Hence we are done by induction.

Finally, if such $x$ and $u$ do not exist, then write $\Sigma=\Sigma_r \times
\Sigma''$ where $\Sigma_r$ is the space where $(f_r,g_r)$
lives and $\Sigma''$ is the product of the spaces
corresponding to the remaining vertices. Then 
\[ \Gamma=\{(f_r,g_r) \mid g_r(s) \leq g_r(t)\} \times \Sigma'', \]
which is closed. Here $\subseteq$ is clear. For $\supseteq$, note that 
the fact that $x$ and $u$ as above do not exist implies that
for all $(f_r,g_r) \in \Sigma_r$, the inequality $g_r(s) \leq g_r(t)$
is equivalent to $g_r(s) < g_r(t)$, and hence implies $s <^p t$. 
\end{proof}

%%%%%%%%%%%%%%%%%%%%%%%%%%%%%%%%%%%%%%%%%%%

\section{The case of monoids} \label{sec:Monoids}

%%%%%%%%%%%%%%%%%%%%%%%%%%%%%%%%%%%%%%%%%%%

\subsection{From monoids to groups}

Recall that the forgetful functor from the category of (not
necessarily abelian) groups to the
category of (not necessarily commutative) monoids has a left adjoint
that sends a commutative monoid $(\Pi,\cdot)$ to the quotient $\Gr(\Pi)$
of the free group on generators $\underline{u}\ (u \in \Pi)$ by the relations
$\underline{1}=1$ and $\underline{u}\cdot \underline{v}=\underline{uv}$.
We write $[w]$ for the equivalence class of an element $w$ in the free
group on the generators $\underline{u}\ (u \in \Pi)$.

Similarly, if $S$ is a $\Pi$-set, then we write $\Gr(S)$ for the set
obtained as the quotient of $\Gr(\Pi) \times S$ by the relations $(a
[\underline{u}],s)=(a,us)$ for $a \in \Gr(\Pi)$, $u \in \Pi$, and $s
\in S$. We write $[a,s] \in \Gr(S)$ for the equivalence class of
$(a,s)$. Now $\Gr(S)$ is a $\Gr(\Pi)$-set via the action $b [a,s] :=
[ba,s]$, and therefore also a $\Pi$-set via the action $u [a,s]:=[[\ul{u}]a,s]$. The map
$\phi:S \to \Gr(S),\ s \mapsto [1,s]$ is $\Pi$-equivariant.

If $\Pi$ is commutative, then so is $\Gr(\Pi)$; if $\Pi$ is finitely
generated, then so is $\Gr(\Pi)$; and if $S$ is generated as a $\Pi$-set
by $s_1,\ldots,s_k$, then $\Gr(S)$ is generated as a $\Gr(\Pi)$-set by
$[1,s_1],\ldots,[1,s_k]$, and therefore the disjoint union of finitely
many $\Gr(\Pi)$-orbits.

From now on, assume that $\Pi$ is commutative. Let 
$a \in \Gr(\Pi)$. Then there exists a $u \in \Pi$ such that $[u] a=[v]$ 
for some $v \in \Pi$. Hence, if $S$ is a $\Pi$-set
and $s \in S$, then $[u][a,s]=[[u]a,s]=[1,vs]=[1,s']$ with
$s':=vs \in S$. The following lemma is immediate.

\begin{lm} \label{lm:Inheritance}
Assume that $\leq$ is a preorder on the $\Pi$-set $S$. Then the relation
$\leq'$ on $\Gr(S)$ defined by
\[ [a,s] \leq' [b,t] \defiff \exists u \in \Pi \exists s',t'
\in S: [u] [a,s]=[1,s'],\: [u] [b,t]=[1,t'], \text{ and }
s' \leq t' \]
is a preorder on the $\Gr(\Pi)$-set $\Gr(S)$, and the $\Pi$-equivariant
map $\phi: S \mapsto \Gr(S), s \mapsto [1,s]$ is order-preserving. \hfill
$\square$
\end{lm}

Since the image of $\Pi$ in $\Gr(\Pi)$ generates the latter group, the
topological spaces $\cP_{\Gr(\Pi)}(\Gr(S))$ and $\cP_{\Pi}(\Gr(S))$
are identical. 

\begin{prop}
The pull-back $\phi^*:\cP_{\Gr(\Pi)}(\Gr(S)) = \cP_{\Pi}(\Gr(S)) \to
\cP_\Pi(S)$ is a
homeomorphism with its image $U$. The map $\cP_\Pi(S) \to
\cP_{\Gr(\Pi)}(\Gr(S))$
that maps $\leq$ to $\leq'$ is continuous, and hence a retraction if
we identify $\cP_{\Gr(\Pi)}(\Gr(S))$ with $U$. 
\end{prop}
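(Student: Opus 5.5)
Throughout, $\Pi$ and $S$ are finitely generated: the open-image argument uses Noetherianity, via the second part of Proposition~\ref{prop:OpenEmbedding}. The proof splits into three parts: injectivity of $\phi^*$, openness and homeomorphism onto the image, and continuity of ${\leq}\mapsto{\leq'}$.

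\emph{Injectivity.} Every element of $\Gr(S)$ has the form $[a,s]$. There is $u\in\Pi$ with $[u][a,s]=[1,s']$. Since $\Gr(\Pi)$ is a group, $[\ul u]$ acts invertibly, so $[a,s]\le[b,t]$ holds iff $[1,s']\le[1,t']$. Choosing a common $u$ for both elements (take the product of the two) reduces every comparison to one between elements of $\phi(S)$. Hence a preorder on $\Gr(S)$ is determined by its pull-back, and $\phi^*$ is injective.

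\emph{Image and homeomorphism.} I expect the image $U$ to consist of the preorders $\le$ on $S$ that are \emph{cancellative}: $us\le ut\Rightarrow s\le t$ for all $u\in\Pi$. For such $\le$, Lemma~\ref{lm:Inheritance} gives $\le'$ with $\phi^*(\le')=\le$. Indeed, $[1,s]\le'[1,t]$ iff $us\le ut$ for some $u$, iff $s\le t$. For the homeomorphism claim, continuity of $\phi^*$ is the first part of Proposition~\ref{prop:OpenEmbedding}. The inverse $U\to\cP_{\Gr(\Pi)}(\Gr(S))$ must send closed sets to closed sets. Take a basic closed set $\{[a,s]<[b,t]\}$ and choose $u$ as above. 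Its image is $U\cap\{s'<t'\}$, which is closed in $U$.

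\emph{Continuity of ${\leq}\mapsto{\leq'}$.} We show that the preimage of each basic closed set $\{[a,s]<'[b,t]\}$ is closed. Reduce as before to $\{[1,s']<'[1,t']\}$. Now $[1,s']<'[1,t']$ means that no $u\in\Pi$ satisfies $ut'\le us'$, that is, $us'<ut'$ for all $u\in\Pi$. The preimage is therefore $\bigcap_{u\in\Pi}\{us'<ut'\}$, an intersection of closed sets, hence closed.

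\emph{Retraction.} The composite $\cP_\Pi(S)\to U$ is the identity on $U$ because $\le'$ pulls back to $\le$ when $\le$ is cancellative. So after identifying $\cP_{\Gr(\Pi)}(\Gr(S))$ with $U$, the map is a continuous retraction.

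\emph{Main obstacle.} The delicate step is Lemma~\ref{lm:Inheritance}, which the paper calls immediate: one must check that $\le'$ is well defined and transitive. The key point is that if $[u][a,s]=[1,s']$ and $[v][a,s]=[1,s'']$, then $vs'$ and $us''$ become equal after multiplying by some $w\in\Pi$. This holds because equality in $\Gr(S)$ is witnessed by such a $w$, in the usual localisation fashion. Consequently the defining condition, phrased with $\exists u$, is independent of the chosen representatives, since $\le$ is stable under multiplication.
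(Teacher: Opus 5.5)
Your proof is correct, and its core matches the paper's. Your injectivity argument is the paper's identity $\phi_*\circ\phi^*=\mathrm{id}$. Your continuity argument for ${\leq}\mapsto{\leq'}$, showing that the preimage of $\{[1,s']<'[1,t']\}$ is $\bigcap_{u\in\Pi}\{us'<ut'\}$, is exactly the paper's.

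You differ in the homeomorphism step. The paper gets it formally. $\phi^*$ is continuous by Proposition~\ref{prop:OpenEmbedding} and has the continuous left inverse $\phi_*$, so its inverse on $U$ is just $\phi_*|_U$. The retraction claim is then the same identity read the other way.

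You instead check continuity of $(\phi^*)^{-1}$ on $U$ directly, by pushing basic closed sets forward as in the proof of Proposition~\ref{prop:OpenEmbedding}. You also identify $U$ as the set of cancellative preorders. Once $\phi_*$ is known to be continuous, that computation is redundant. What it buys you is an explicit description of $U$. For the retraction you use the inclusion of $U$ into the cancellative preorders, which you do not state but which is immediate: if ${\leq}=\phi^*(\preceq)$ and $us\leq ut$, apply $[\ul u]^{-1}$ to $[1,us]\preceq[1,ut]$.

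Remove your opening restriction to finitely generated $\Pi$ and $S$, which you justify by an ``open-image argument'' via the second part of Proposition~\ref{prop:OpenEmbedding}. That part does not apply, because $\phi\colon S\to\Gr(S)$ is not surjective. The statement also claims no openness: Example~\ref{ex:Mon1Z1} shows that for $\Pi=S=\ZZ_{\geq 0}$ the image $U$ is neither open nor closed. No step of your argument uses finiteness or Noetherianity, so drop that sentence. The proof then holds for arbitrary commutative $\Pi$ and arbitrary $\Pi$-sets $S$, as the statement intends.

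One minor point about your ``main obstacle'' paragraph. The definition of $\leq'$ in Lemma~\ref{lm:Inheritance} is already existential, so there is no well-definedness issue. The fact you isolate ($wvs'=wus''$ for some $w\in\Pi$) is what transitivity needs.
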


\begin{proof}
By abuse of notation, we write $\phi_*$ for the map ${\leq} \mapsto
{\leq'}$. Then we have 
\begin{equation} \label{eq:Retraction} 
\phi_* \circ \phi^*=\id_{\cP_{\Gr(\Pi)}(\Gr(S))}.
\end{equation}
Indeed, this says that, for $[a,s],[b,t] \in \Gr(S)$ and ${\preceq} \in
\cP_{\Gr(\Pi)}(\Gr(S))$, we have $[a,s] \preceq [b,t]$ if and only if there exist a $u
\in \Pi$ and $s',t' \in S$ with $u[a,s]=[1,s']$ and $u[b,t]=[1,t']$
and $[1,s'] \preceq [1,t']$, which is evident.

For the continuity of $\phi_*$ we
proceed as follows. Let $[a,s],[b,t] \in \Gr(S)$ and consider ${\leq}
\in \cP_\Pi(S)$ with ${\leq'}=\phi_*(\leq) \in \cP_{\Gr(\Pi)}(\Gr(S))$.  Then we have
$[a,s] <' [b,t]$ if and only if for all $u \in \Pi$ and $s',t' \in S$
such that $u[a,s]=[1,s']$ and $u[b,t]=[1,t']$ we have $s'<t'$. This
is an intersection of (possibly infinitely many) closed conditions on
$\leq$, hence a closed condition. This shows that $\phi_*$ is 
continuous. Using also the identity \eqref{eq:Retraction}, it follows 
that $\phi^*$ is a homeomorphism with $U$.
\end{proof}

The following example illustrates that $\phi_*$ need not be continuous
if the topology on $\cP_\Pi(S)$ is defined as the coarsest topology in
which $s<t$ is an {\em open} condition on $\leq$. This is one more reason why
we prefer to regard this as a closed condition.

\begin{ex} \label{ex:Mon1Z1}
Let $\Pi=S=\ZZ_{\geq 0}$. Then $\cP(\ZZ_{\geq 0})$ is the set
\[ \{-\infty,\ldots,-2,-1,0,1,2,\ldots,\infty\}, \] 
where $i \in \ZZ_{\geq
0}$ stands for the preorder determined by 
\[ 0<1<\ldots<i \approx i+1 \approx i+2 \approx \ldots, \]
$\infty$ stands for the standard order on $\ZZ_{\geq 0}$, and $-i$ is
the dual preorder of $i$ for all $i \in \ZZ_{\geq 0} \cup
\{\infty\}$. 
On the other hand, $\Gr(\ZZ_{\geq 0})=\ZZ$, and $\cP(\ZZ)=\{-,0,+\}$,
where $0$ is the trivial preorder, $+$ stands for the standard order
on $\ZZ$, and $-$ stands for the dual of the standard order on $\ZZ$. See
Figure~\ref{fig:PNPZ}.
\begin{figure}
\begin{center}
\includegraphics[scale=.9]{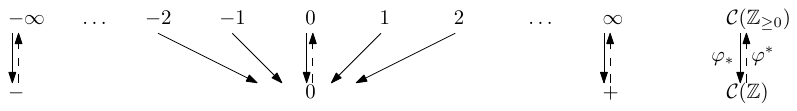}
\end{center}
\caption{The maps $\phi^*$ and $\phi_*$ for $\Pi=\ZZ_{\geq 0}$.}
\label{fig:PNPZ}
\end{figure}
We have 
\begin{align*}
&\phi^*(+)=\infty,\ \phi^*(-)=-\infty, \text{ and } \phi^*(0)=0;\\
&\phi_*(\infty)=+,\ \phi_*(-\infty)=-, \text{ and } \phi_*(\pm i)=0
\text{ for all } i \in \ZZ_{\geq 0}.
\end{align*}
In our topology, the nonempty open sets of $\cP(\ZZ_{\geq 0})$ are
precisely the sets $O \subseteq \ZZ \cup \{\pm \infty\}$ with the property
that $0 \in O$ and such that for all $i,j \in O$ also all elements between
$i,j$ in the standard order on $\ZZ \cup \{\pm \infty\}$ are in $O$. In
particular, $0$ is the unique open point, and $\pm \infty$ are the only
two closed points. In $\cP(\ZZ)$, the nonempty open sets are 
\begin{equation} \label{eq:OpenZ} \{0\},\ \{0,+\},\ \{-,0\},\ \{-,0,+\}.
\end{equation}
Both maps $\phi^*$ and $\phi_*$ are indeed continuous. (Note
that the image of $\phi^*$ is neither open nor closed, unlike the
situation for pull-backs along surjective monoid homomorphisms, see
Proposition~\ref{prop:OpenEmbedding}.)

Now assume that we had defined the topology on $\cP(\Pi)$ (for an arbitrary 
monoid $\Pi$) as the coarsest in which the sets $\{{\leq} \mid u \leq
v\}$ are closed. In that topology, the nonempty {\em closed} sets in
$\cP(\ZZ_{\geq 0})$ are the sets of the form
\[ C=\{-i,\ldots,0,\ldots j\}\]
where $i,j \in \ZZ_{\geq 0} \cup \{\infty\}$. In particular, $0$
is a closed point, but $\infty$ and $-\infty$ are not open points.
Similarly, in $\cP(\ZZ)$ the nonempty closed sets are precisely those
in \eqref{eq:OpenZ}. Observe, however, that $\phi^*$ is
continuous, since $(\phi^*)^{-1}(C)$ is among these four sets, but
$\phi_*$ is not continuous, since $\phi_*^{-1}(\{+\})=\{\infty\}$ is
not an open set. 
\end{ex}

%%%%%%%%%%%%%%%%%%%%%%%%%%%%%%%%%%%%%%%%%%%

\subsection{Strategy}

Let $\Pi$ be a finitely generated commutative monoid and let $S$ be a
finitely generated $\Pi$-set. Our goal is to describe all preorders on
$S$ in a recursive manner reminiscent of Theorem~\ref{thm:Preotree}.
We proceed as follows. Let $\leq$ be a preorder on $S$ and set
$\Pi':=\Gr(\Pi)$ and $S':=\Gr(S)$ and let $\leq'$ be the preorder
constructed from $\leq$ in Lemma~\ref{lm:Inheritance}.  The preorder
$\leq'$ comes from a preotree $\tau$ for $(\Gr(\Pi),\Gr(S))$ with
numerical data by Theorem~\ref{thm:Preotree}. We will also write
$\leq'$ for the pull-back of $\leq'$ to $S$. This is the preorder on $S$
determined by $s \leq' t \Leftrightarrow \exists u \in \Pi: us \leq ut$,
and in particular satisfies $s \leq t \Rightarrow s \leq' t$ for all
$s,t \in S$.  Let $\approx$ and $\approx'$ be the equivalence relations
on $S$ corresponding to $\leq$ and $\leq'$. Then $\approx$ is a refinement
of $\approx'$. Moreover, $\leq$ is uniquely determined by $\leq'$ and the 
restrictions of $\leq$ to equivalence classes for $\approx'$. We pause
to discuss an example that we will return to throughout this section. 

\comment{
We describe a first attempt at a recursive description of $\leq$, and
show the difficulties that arise. While the remainder of the paper is
logically independent of this attempt, it is nevertheless instructive
and motivates what we do instead afterwards.

Every element $s \in S$ determines a leaf $y(s)$ in $\tau$ in an
obvious manner, and for each leaf $y$, the set $\Sigma_y:=\{s \in S
\mid y(s)=y\}$ is a union of equivalence classes for $\approx'$ and
preserved under $\Pi$. Now $\leq$ is uniquely determined by $\leq'$ and
the restriction of $\leq$ to the finitely many (disjoint) $\Pi$-sets
$\Sigma_y$ (though the latter are not independent, since the $f_x$
and $g_x$ along the common part of the paths leading to two leaves
$y$ and $y'$ coincide).  So we first focus on a single $\Sigma_y$,
relabel this set to be $S$, and replace $\tau$ by the {\em group
partree} for $\Gr(\Pi)$ and this new $\Gr(S)$. This group partree is
a single path $r=x_0,x_1,\ldots,x_m=y$ with corresponding numerical
data $(\tilde{f}_0,\tilde{g}_0),\ldots,(\tilde{f}_{m},\tilde{g}_{m})$
where each $\tilde{f}_i$ is a homomorphism $\Gr(\Pi) \to \RR$
and each $\tilde{g}_m$ is a map $\Gr(S) \to \RR$ satisfying
$\tilde{g}_m(us)=\tilde{f}_m(u)+\tilde{g}_m(s)$. (We have simplified
notation by using $i$ instead of $x_i$ as a subscript.)

\begin{lm}
Let $\Pi'$ denote the preimage in $\Pi$ of the kernel of
$\tilde{f}:=(\tilde{f}_1,\ldots,\tilde{f}_{m})$. Then $\Pi'$ is a finitely generated submonoid of $\Pi$ and
each equivalence class for $\approx'$ is a finitely generated $\Pi'$-set.
\end{lm}

\begin{proof}
Let $f_i$ and $g_i$ be the pull-backs of $\tilde{f}_i$ and $\tilde{g}_i$ to
$\Pi$ and $S$, respectively, and set $f:=(f_0,\ldots,f_{m}):\Pi \to
\RR^m$ and $g:=(g_0,\ldots,g_{m}):S \to \RR^m$. Then $\Pi'$ equals the
kernel of $f$ and the equivalence classes of $\approx'$ are precisely the
fibres of $g$. If the rank of $\Gr(\Pi)$ equals $n$, then the statement
that $\Pi'$ is finitely generated is equivalent to the following:
given a matrix $A \in \RR^{(m+1) \times n}$, the monoid $M:=\ker(A)
\cap \ZZ_{\geq 0}^n$ is finitely generated. Now $\ker(A) \cap \ZZ^n$
is a saturated lattice in $\ZZ^n$, and hence equal to $\ker(\tilde{A})
\cap \ZZ^n$ for some {\em integer} matrix $\tilde{A} \in \ZZ^{d \times
n}$. Now the fact that $M=\ker(\tilde{A}) \cap \ZZ_{\geq 0}^n$ is a
finitely generated monoid follows from Gordan's lemma.

As $S$ is a finitely generated $\Pi$-set, the statement about
equivalence classes for $\approx'$ 
follows if we can show that every fibre of $f$ is a finitely generated
$\Pi'$-subset. In the notation of the previous paragraph, this is equivalent 
to the following statement: the fibre $F:=\tilde{A}^{-1}(\tilde{A}v)
\cap \ZZ_{\geq 0}^n$ is finitely generated under the monoid $M$.
This also follows from Gordan's lemma: in $\RR^{n+1}$ consider
the rational polyhedral cone $C:=\{(u,c) \in \RR^n \times \RR  \mid 
Au-cAv=0\}$. By Gordan's lemma, the monoid $C \cap \ZZ_{\geq 0}^n$ is finitely
generated. From among the generators, the ones of the form $(u,0)$
correspond to generators $u$ for $M$, and the ones of the form $(u,1)$
correspond to generators $u$ of the fibre $F$ as an $M$-set.
\end{proof}

Now one would like to do induction. However, there are two problems:
first, it may be that $m=0$ so that $\Pi'=\Pi$ and there is a single
equivalence class for $\approx'$, even if $\leq$ is not the trivial
preorder. This is the case in Example~\ref{ex:n2}. On the other hand,
it may be that $\Pi' \subsetneq \Pi$, but that there are infinitely many
equivalence classes for $\approx'$. Here is an example where that
happens. 
}

\begin{ex} \label{ex:n2bis}
Let $S$ and $\Pi$ both be $\Mon_2$, the monoid of monomials in 
$x,y$. Fix any ideal $I$ in $\Pi$ with $\emptyset \neq I \neq \Pi$.
Set $x^i y^j \leq x^k y^l$ if and only if
either $i+j<k+l$ or ($i+j=k+l$ and $i>k$) or ($i+j=k+l$
and all monomials on the segment between $x^i y^j$ and $x^k y^l$
are in $I$). See Figure~\ref{fig:exn2bis}. The preorder $\leq$ is a strict refinement
of $\leq'$.
\begin{figure}
\includegraphics[scale=.7]{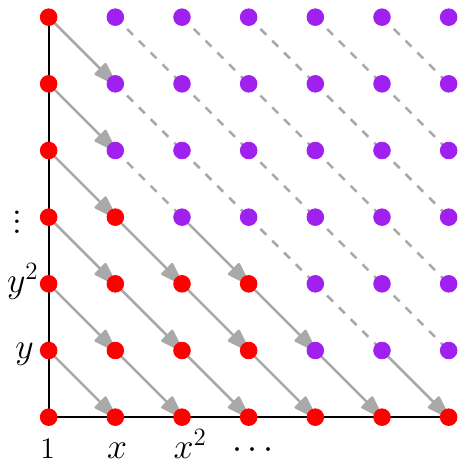} 
\caption{A preorder $\leq$ on monomials in $x,y$: the equivalence classes for the
preorder $\leq'$ pulled back from $\Gr(S)$ are the diagonal lines;
they increase
towards the north-east. The arrows indicate the order restricted to these classes,
pointing from larger to smaller, and dashes represent $\approx$. The
purple monomials are those in the ideal $I$ from Example~\ref{ex:n2bis}.} \label{fig:exn2bis}
\end{figure}
\end{ex}

In Example~\ref{ex:n2bis}, there are infinitely many $\approx'$-classes,
none of which have a nontrivial submonoid of $\Mon_2$ acting on
them. So at first, it is unclear how to do induction as in the proof
of Theorem~\ref{thm:Preotree} if one thinks of $s \leq' t$ as playing
the role of the constraint $g(s) \leq g(t)$. Roughly speaking, we will
decompose $S$ into finitely many subsets $S'$, each of which is
a finitely generated $\Pi'$-set for some finitely generated submonoid
$\Pi'$ of $\Pi$. In fact, this decomposition will be canonical after
fixing an {\em asymptotic range}, which is a suitable subset of $S$
on which $\leq$ agrees with $\leq'$.

%%%%%%%%%%%%%%%%%%%%%%%%%%%%%%%%%%%%%%%%%%%

\subsection{Acting with $\Pi$ on ideals}

As before, let $\Pi$ be a commutative monoid. We will need the following
lemmas on the action of $\Pi$ on ideals.

\begin{lm} \label{lm:Action}
The monoid $\Pi$ acts on the set of ideals in $\Pi$ via 
\[ (u,J) \mapsto (J:u):=\{v \in \Pi \mid uv \in J\}. \]
For any ideal $J$, the stabiliser
\[ \Pi_J:=\{u \in \Pi \mid (J:u)=J\} \]
has the property that if $u,v \in \Pi$ satisfy $uv \in \Pi_J$, then $u,v
\in \Pi_J$. In particular, if $U$ is any set of generators of $\Pi$, then
$\Pi_J$ is a submonoid of $\Pi$ generated by $U \cap \Pi_J$. Moreover,
if $J \notin \{\emptyset,\Pi\}$, then $\Pi_J$ is generated by a proper subset
of $U$.
\end{lm}

\begin{proof}
Clearly, if $J$ is an ideal in $\Pi$, then so is $(J:u)$. The first
statement expresses that $(J:1)=J$ and $(J:uv)=(J:u):v$ for all $u,v
\in \Pi$ and ideals $J$ in $\Pi$.

For the second statement, assume that $(J:uv)=J$. Then  
\[ J \subseteq (J:u) \subseteq (J:u):v = (J:uv) =J,\]
yielding $(J:u)=J$, and hence $u \in \Pi_J$. Symmetrically, $v \in \Pi_J$.

For the third statement, observe that $u_1 \cdots u_k \in \Pi_J$
implies $u_i \in \Pi_J$ for all $i$ by the previous statement. 

Finally, if $J \notin \{\emptyset,\Pi\}$, then let $u_1 \cdots u_k$ be a product
of elements $u_i \in U$ that lies in $J$ with $k \geq 1$
minimal. It follows that $u_2 \cdots u_k \in (J:u_1) \setminus J$. Hence
$u_1 \notin \Pi_J$ and $\Pi_J$ is generated by a subset of $U
\setminus \{u_1\}$.
\end{proof}

\begin{re} \label{re:preord-set}
For any subset $X\subseteq \Pi$ the relation $\leq_X$ defined on $\Pi$
by $u\leq_X v \defiff (X:u) \subseteq (X:v)$ is reflexive and
transitive. Such relations were considered in the context of formal
languages in \cite{Kari1995}. The relation $\leq_X$ is total if and
only if 
$u_1v_1, u_2v_2 \in X$ implies $u_1v_2 \in X$ or $u_2v_1 \in X$, and
it is positive ($1\leq_X u$ for all $u\in \Pi)$ if and only if $X$ is an ideal. This construction provides an easy way to define preorders on $\Pi$ which was, e.g., used in \cite{Rep83} with $\Pi = \Mon_n$ and $X$ an ideal on $\Mon_n$. 
 \end{re}

Actions of monoids with the property of stabilisers as in the previous
lemma will play an important role in what follows.

\begin{de}
Let $\Pi$ be a commutative monoid acting on a set $\cJ$. We say that
the action is {\em good} if it has the property that 
if $u,v \in \Pi$ and $x \in \cJ$ satisfy $uvx=x$, then also
$ux=vx=x$.
\end{de}

\begin{lm} \label{lm:TJPi}
Assume that $\Pi$ is finitely generated and let $S$ be a finitely
generated $\Pi$-set. Let $T \subseteq S$ be a $\Pi$-stable subset. For
$s \in S$, define $J_s:=\{u \in \Pi \mid us \in T\}$, an ideal in
$\Pi$. Then:
\begin{enumerate}
\item The set $\cJ:=\{J_s \mid s \in S\}$ is finite. 
\item For each $s\in S$ and $u\in\Pi$, $(J_s:u)=J_{us}$, so $\cJ$ is preserved under the action
of $\Pi$ on ideals in $\Pi$, and the action of $\Pi$ on $\cJ$ is good.
\item For any $J \in \cJ$ the set 
\[ S_J:=\{s \in S \mid J_s=J\} \]
is a finitely generated $\Pi_J$-set.
\end{enumerate}
\end{lm}

\begin{proof}
\begin{enumerate}
\item Clearly $J_s \subseteq J_{us}$ for all $u \in \Pi$ and $s \in S$,
and since $\Pi$ satisfies the ascending chain condition on ideals, any
chain $J_s \subseteq J_{u_1s} \subseteq J_{u_2u_1 s} \subseteq \ldots $
is eventually constant. Now the finiteness of $\cJ$ follows from K\"onig's
lemma and the fact that $S$ is a finitely generated $\Pi$-set.

\item For each $s\in S$ and $u\in\Pi$,  $J_{us}=\{v \in \Pi \mid v(us) \in T\}
=\{v \in \Pi \mid (vu)s \in T\}=(J_s:u)$. The goodness of the action is immediate.

\item That $S_J$ is a $\Pi_J$-stable subset of $S$ follows from the fact that
$J_{us}=(J_s:u)=(J:u)=J$ for any $s \in S_J$ and $u \in \Pi_J$.
Let $s_1,s_2,\ldots$ be an infinite sequence in $S_J$. By
Dickson's lemma, there exist $i<j$ and a $u \in \Pi$ with $us_i=s_j$.
Then $u \in \Pi_J$, since
\[ (J:u)=(J_{s_i}:u)=J_{us_i}=J_{s_j}=J.\]
This implies that $S_J$ is finitely generated
as a $\Pi_J$-set. \qedhere
\end{enumerate}
\end{proof}

%%%%%%%%%%%%%%%%%%%%%%%%%%%%%%%%%%%%%%%%%%%

\subsection{Asymptotic ranges of a preorder}

In Example~\ref{ex:n2bis}, we see that on each diagonal line far enough
from the origin (any one containing the monomials of fixed degree $\geq
6$) there is a unique purple equivalence class, and that multiplying
with variables maps this class into the corresponding class in the next
diagonal line. The following result shows that this is a
general phenomenon.

\begin{prop} \label{prop:T}
Let $\Pi$ be a finitely generated commutative monoid, $S$ a finitely
generated $\Pi$-set, $\leq$ a preorder on $S$, and $\leq'$ the preorder
on $S$ defined by $s \leq t \Leftrightarrow \exists u \in \Pi: us \leq
ut$. Then there exists a $\Pi$-stable subset $T$ of $S$ satisfying:
\begin{enumerate}
\item For any equivalence class $Q$ of $\approx'$, the intersection $T
\cap Q$ is either empty or precisely one equivalence class of $\approx$.
\item For any $s \in S$, there exists a $u \in \Pi$ with $us \in T$.
\end{enumerate}
\end{prop}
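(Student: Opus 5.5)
The plan is to find a single element $u \in \Pi$ such that multiplying by a suitable power of $u$ collapses every $\approx'$-class into one $\approx$-class. We then take $T$ to be the $\approx$-saturation of the image of that power.

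\emph{Step 1: encode the congruences as submodules.} Fix a field $K$ and let
\[ M := \langle s-t \mid s \approx t\rangle_K, \qquad M' := \langle s-t \mid s\approx' t\rangle_K, \]
both $K\Pi$-submodules of $KS$. For any congruence, the span of the differences within classes is the kernel of $KS \to K(S/{\approx})$. Hence $s \approx t \Leftrightarrow s-t \in M$, and likewise $s\approx' t \Leftrightarrow s-t\in M'$. By Noetherianity, $M'$ is generated by finitely many differences $s_i-t_i$ with $s_i \approx' t_i$, for $i\in[k]$.

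For each $i$ there are $v_i, v_i'\in\Pi$ with $v_is_i \leq v_it_i$ and $v_i't_i\leq v_i's_i$. Then $w_i := v_iv_i'$ satisfies $w_is_i\approx w_it_i$. Put $u:=w_1\cdots w_k$. Then $u(s_i-t_i)\in M$ for all $i$, so $uM'\subseteq M$. Consequently $s\approx' t$ implies $us \approx ut$.

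\emph{Step 2: stabilise along powers of $u$.} Consider the ascending chain of submodules
\[ M_n:=\{x\in KS \mid u^nx\in M\}. \]
By Noetherianity it stabilises, say $M_N=M_{N+1}$. I will use that $\leq'$ is cancellative. It is pulled back from a preorder on the $\Gr(\Pi)$-set $\Gr(S)$, in which $u$ acts invertibly, so $ua\approx' ub \Leftrightarrow a\approx' b$.

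Now let $a,b \in S$ with $u^Na\approx' u^Nb$. By cancellativity, $a\approx' b$. By Step 1 applied to $u^Na \approx' u^N b$, we get $u^{N+1}a\approx u^{N+1}b$, so $a-b\in M_{N+1}=M_N$. Hence $u^Na\approx u^Nb$. Thus any two $\approx'$-equivalent elements of $u^NS$ are $\approx$-equivalent.

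\emph{Step 3: define $T$.} Let $T$ be the union of all $\approx$-classes that meet $u^NS$.

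$T$ is $\Pi$-stable. If $s\approx u^Na$ and $w \in \Pi$, then $ws\approx wu^Na=u^N(wa)\in u^NS$, using commutativity.

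Property (2) holds. For any $s \in S$, we have $u^Ns \in u^NS \subseteq T$.

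Property (1) holds. Since $\approx$ refines $\approx'$, $T$ is a union of $\approx$-classes, each contained in a single $\approx'$-class. If $x,y\in T\cap Q$, write $x\approx u^Na$ and $y\approx u^Nb$. Then $u^Na\approx' u^Nb$, so $u^Na\approx u^Nb$ by Step 2, and therefore $x\approx y$. So $T\cap Q$ is empty or exactly one $\approx$-class.

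The main subtle point is Step 2. Step 1 alone gives $T = uS$, but elements of $uS$ that are $\approx'$-equivalent are only guaranteed to become $\approx$-equivalent after a \emph{further} multiplication by $u$. It is the stabilisation of the chain $M_n$, combined with cancellativity of $\approx'$, that removes this shift. The saturation in Step 3 is only needed so that $T\cap Q$ is a whole $\approx$-class rather than part of one.
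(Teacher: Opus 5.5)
Your proof is correct, but it takes a different route from the paper's.

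\textbf{The paper's route.} It uses Zorn's lemma to take a $\Pi$-stable $T$ that is inclusion-wise maximal subject to (1). It then proves (2) by contradiction. Among the $s_0$ with $\Pi s_0\cap T=\emptyset$, it picks one whose ideal $B_{s_0}=\langle u-v\mid us_0\approx vs_0\rangle_K\subseteq K\Pi$ is maximal, using Noetherianity of $K\Pi$. It then shows that the $\approx$-saturation of $\Pi s_0$ could be adjoined to $T$.

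\textbf{Your route.} You use Noetherianity of $KS$ to produce a single $u\in\Pi$ with $s\approx' t\Rightarrow us\approx ut$. You then write $T$ down explicitly as the $\approx$-saturation of $u^NS$. This is choice-free, and it shows that one element pushes all of $S$ into $T$ uniformly.

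\textbf{What each buys.} The paper's argument directly produces an \emph{inclusion-wise maximal} asymptotic range, which is what the proof of Theorem~\ref{thm:MonoidPartree} later uses. From your $T$ you can obtain such a maximal one by a short Zorn argument. Property (1) is preserved under unions of chains of $\Pi$-stable sets, and (2) passes to supersets. In fact your element $u$ also reproves the paper's stronger claim that every $\Pi$-stable set maximal for (1) satisfies (2).

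\textbf{A simplification.} Cancellativity of $\leq'$ is immediate from the definition: if $vua\leq vub$, then $vu$ witnesses $a\leq' b$. With it, $ua\approx' ub$ gives $a\approx' b$, and then Step 1 gives $ua\approx ub$. So the $\approx$-saturation of $uS$ already works, and the chain $M_n$ is unnecessary. Conversely, your chain argument in Step 2 never uses the conclusion $a\approx' b$ that you draw from cancellativity. Either ingredient alone suffices. Your closing remark that Step 1 by itself only works after a further multiplication by $u$ is therefore not accurate, though this does not affect the correctness of your proof.
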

Such a subset $T$ will be called an {\em asymptotic range} for $\leq$.
\begin{proof}
Clearly, the empty set is a $\Pi$-stable subset of $S$ satisfying (1). By Zorn's lemma, there exists such a 
subset $T$ that is maximal with respect to inclusion. We claim that this $T$ also satisfies (2). 

Suppose, on the contrary, that the $\Pi$-stable set $S_0:=\{s\in S \mid
\Pi s \cap T = \emptyset\}$ is nonempty. For any $s_0 \in S_0$, define
$T_0:=\{s \in S \mid \exists u \in \Pi: s \approx us_0\}$.  We claim
that any $t_0 \in T_0$ and $t \in T$ are in distinct equivalence
classes for $\approx'$. Assume towards a contradiction that this is not the case. 
Then $t_0 \approx us_0$ and $vt_0 \approx vt$ for some $u,v\in \Pi$, so $vus_0 \approx vt \in T$, and hence $vus_0 \in T$, 
a contradiction to $s_0 \in S_0$.

For any $s \in S$, let $B_s$ be the $K$-subspace of $K\Pi$ spanned by all
$u-v$ with $us \approx vs$. This is an ideal in the algebra $K\Pi$. By
Noetherianity of $K\Pi$, we can choose $s_0 \in S_0$ with $B_{s_0}$
inclusion-wise maximal. We then claim that $T \sqcup T_0$ also satisfies
property (1). Indeed, since $T$ and $T_0$ live in different equivalence
classes for $\approx'$ by the previous paragraph, it suffices to prove
that $T_0$ satisfies (1). If not, then there exist $t_1,t_2 \in T_0$
satisfying $t_1 \approx' t_2$ but $t_1 \not\approx t_2$. Let $u_1,u_2\in\Pi$ be such
that $t_i \approx u_i s_0$ for $i=1,2$. Since $t_1 \approx' t_2$, there
exists a $v \in \Pi$ with $v t_1 \approx v t_2$. It follows that $vu_1 -
vu_2 \in B_{s_0}$ but $u_1-u_2 \notin B_{s_0}$. Now set $s_1:=vs_0 \in
S_0$. Then clearly $B_{s_1} \supseteq B_{s_0}$ and the previous sentence
implies $u_1-u_2 \in B_{s_1} \setminus B_{s_0}$. This contradicts the
maximality of $B_{s_0}$. Hence $T_0 \sqcup T$ satisfies (1), as
claimed. 

However, this contradicts the maximality of $T$, so $S_0$ is
empty, and hence $T$ satisfies (2).
\end{proof}

In the proof below, we will use an inclusion-wise maximal asymptotic range $T
\subseteq S$. We then define 
\begin{align*}
&S_{\leq T}:=\{s \in S \mid \exists t \in T: s \approx' t \text{ and }
s \leq t\} \supseteq T \text{ and}\\
&S_{\geq T}:=\{s \in S \mid \exists t \in T: s \approx' t \text{ and }
s \geq t\} \supseteq T.
\end{align*}
For Example~\ref{ex:n2bis} the situation is depicted in
Figure~\ref{fig:n2ST}.

\begin{figure}
\begin{center}
\includegraphics[scale=.7]{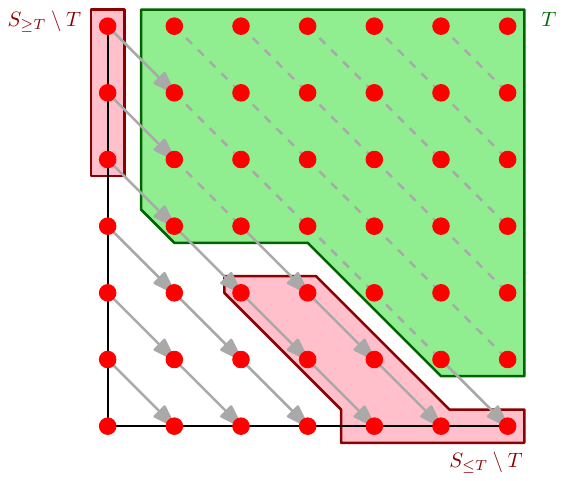}
\caption{A maximal asymptotic range $T$ for 
Example~\ref{ex:n2bis}, as well as the sets $S_{\leq T}$ and $S_{\geq T}$
(the purple dots representing $I$ from that example no longer play a role here).} \label{fig:n2ST}
\end{center}
\end{figure}

%%%%%%%%%%%%%%%%%%%%%%%%%%%%%%%%%%%%%%%%%%%

\subsection{Monoid partrees}

Our monoid analogue of preotrees is the following.

\begin{de}
Let $\Pi$ be a commutative monoid and let $S$ be a finitely generated
$\Pi$-set. A {\em monoid partree} $\mu$ for the pair $(\Pi,S)$ is a
partree $\mu$ on $S$ such that each vertex $x$, in addition to the data
$S_x \subseteq S$, $\preceq_x$, and $\leq_x$ from
Definition~\ref{de:Partree}, is further equipped with a
finitely generated submonoid $\Pi_x \subseteq \Pi$ with a good action $(u,y)
\mapsto uy$ on the children of $x$, such that the following conditions
are satisfied:
\begin{enumerate}

\item for the root $r$, we have $\Pi_r=\Pi$, and for any child $y$ of 
$x$, the stabiliser of $y$ in $\Pi_x$ is $\Pi_y$;

\item $S_x$ is a finitely generated $\Pi_x$-set, and if $y$ is a child
of $x$, then for all $u \in \Pi_x$, we have $u S_y \subseteq S_{uy}$;

\item $\leq_x$ is the pull-back of a preorder on the $\Gr(\Pi_x)$-set
$\Gr(S_x)$ (and therefore, in particular, compatible with the action of
$\Pi_x$); 

\item for any children $y,z$ of $x$ and $u \in \Pi_x$, we have $y \preceq_x
z \Rightarrow uy \preceq_x uz$; and 

\item for any child $y$ of $x$ and $s,t \in S_y$, we have $s
\leq_y t \Rightarrow s \leq_x t$.
\qedhere
\end{enumerate}
\end{de}

\begin{thm} \label{thm:MonoidPartree}
Let $\Pi$ be a finitely generated commutative monoid, let $S$ be a
finitely generated $\Pi$-set, and let $\leq$ be a preorder on $S$.
Then there exists a monoid partree $\mu$ for $(\Pi,S)$ such that $\leq$ is the preorder on $S$ determined by $\mu$. 
\end{thm}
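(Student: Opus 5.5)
The plan is to build $\mu$ recursively, one level at a time, and to prove correctness by induction on the number of generators of $\Pi$ (and, secondarily, on the number of $\Pi$-orbit generators of $S$).

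\textbf{The root.} Let $r$ be the root, with $\Pi_r:=\Pi$ and $S_r:=S$. Take $\leq_r$ to be the preorder $\leq'$, i.e.\ the pull-back to $S$ of the preorder on $\Gr(S)$ from Lemma~\ref{lm:Inheritance}. Then condition (3) of a monoid partree holds at $r$. Moreover $s\leq t\Rightarrow s\leq' t$, which is what condition (5) will need one level up. The remaining task is to describe $\leq$ on each $\approx'$-class.

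\textbf{The children of $r$.} Fix an inclusion-wise maximal asymptotic range $T$ (Proposition~\ref{prop:T}). For $s\in S$ put $J_s:=\{u\in\Pi\mid us\in T\}$. By Lemma~\ref{lm:TJPi}:
\begin{enumerate}
\item the set $\cJ$ of these ideals is finite;
\item $\Pi$ acts goodly on $\cJ$ by $(u,J)\mapsto (J:u)$;
\item each $S_J$ is a finitely generated $\Pi_J$-set.
\end{enumerate}
I would index the children of $r$ by $\cJ$, possibly refined by a ``side'' label recording whether $s\in S_{\leq T}$ or $s\in S_{\geq T}$. The side is also $\Pi$-compatible, so the action on labels stays good. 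Set $S_y:=S_J$ for the child $y$ with label $J$, and $\Pi_y:=\Pi_J$, the stabiliser (Lemma~\ref{lm:Action}). Since $uS_J\subseteq S_{(J:u)}$, condition (2) holds. At each child, $\leq_y$ is the restriction of $\leq$ to $S_y$, to be expanded recursively.

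\textbf{Recursion.} The child with $J=\Pi$ has $S_y=T$. On $T$, the preorders $\leq$ and $\leq'$ agree: each $\approx'$-class meets $T$ in a single $\approx$-class, and strict $<'$ forces strict $<$. So $y$ can be made a leaf with $\leq_y:=\leq'|_T$. For $J\notin\{\emptyset,\Pi\}$, Lemma~\ref{lm:Action} shows that $\Pi_J$ is generated by a proper subset of the generators of $\Pi$. Hence the induction hypothesis, applied to $(\Pi_J,S_J)$ and the restriction of $\leq$, yields a monoid partree for $(\Pi_J,S_J)$ to graft at $y$. Its root preorder is the $\Gr(\Pi_J)$-preorder coming from $\leq|_{S_J}$. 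This preorder is implied by $\leq|_{S_J}$ and hence by $\leq'$, which gives condition (5). The ideal $J=\emptyset$ does not occur, by property (2) of asymptotic ranges.

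\textbf{The partial order $\preceq_r$ (main obstacle).} I expect the main work to lie here. Define $y\preceq_r z$ if some $s\in S_y$ and $t\in S_z$ satisfy $s\approx' t$ and $s\leq t$. Three things must be shown.
\begin{enumerate}
\item For distinct children $y\neq z$, the outcome of comparing $\approx'$-equivalent $s\in S_y$ and $t\in S_z$ depends only on $(y,z)$. This makes $\preceq_r$ a partial order and gives the comparability axiom (3) of a partree.
\item Condition (4) holds: $y\preceq_r z\Rightarrow uy\preceq_r uz$.
\item The partree recipe of Definition~\ref{de:Partree} reproduces $\leq$.
\end{enumerate}

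For point 1 I would argue as follows. Take $s\approx' t$ with $J_s\neq J_t$, and pick $v$ with, say, $vs\in T$ and $vt\notin T$. Then $vs$ and $vt$ are $\approx'$-equivalent but not $\approx$-equivalent. So $vt$ lies strictly below or strictly above the $T$-class of its $\approx'$-class, and this is the side recorded in the label. Comparing both $s$ and $t$ with the $T$-class, and using maximality of $T$ together with goodness, should determine the sign of $s$ versus $t$ from the labels alone.

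If two children on the same side with different $J$ turn out not to be uniformly comparable, I would refine the labels further, using the finite data $(J_s,\text{side})$ along the finitely many ideals $(J_s:u)$. By Lemma~\ref{lm:TJPi}(1) this refinement is still finite. It yields a finer, still finite and good, labelling on which $\preceq_r$ is determined.
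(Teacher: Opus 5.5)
Your skeleton is the paper's: root preorder $\leq'$, a maximal asymptotic range $T$, children indexed by ideal data with their stabilisers, $T$ as a leaf, and induction via Lemma~\ref{lm:Action}. But the step you flag as the main obstacle fails for the labels you propose. When the $\approx'$-class of $s$ misses $T$, which happens even for maximal $T$, ``the side of $s$'' carries no information, and $J_s$ does not determine the sides of the multiples $us$. Your argument for point 1 tacitly assumes that $vt$ lies on the side recorded for $t$.

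Here is a concrete counterexample. Let $\Pi=\ZZ_{\geq 0}$ with generator $x$, and let $S$ be the free $\Pi$-set on $s,s',t,w$. Put $x^js,x^js',x^jt$ on level $j$ and $x^jw$ on level $j+1$. Order $S$ by level first, and within levels as follows:
\begin{itemize}
\item level $0$: $s<t<s'$;
\item level $1$: $xs<xt<w<xs'$;
\item level $2$: $x^2s<x^2t\approx xw<x^2s'$;
\item each level $\geq 3$ is a single class.
\end{itemize}
Multiplication by $x$ is monotone from each level to the next, so this is a preorder, and its $\approx'$-classes are the levels. The set $T:=\{w,xw,x^2t\}\cup\{\text{levels}\geq 3\}$ is a maximal asymptotic range. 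Indeed, every element of level $\leq 2$ outside $T$ has a multiple on level $1$ or $2$ outside $T$, so no larger $\Pi$-stable set meets each level in at most one $\approx$-class. Level $0$ misses $T$, and $J_s=J_{s'}=x^3\Pi$, $J_t=x^2\Pi$. Hence $s,s'$ fall into the same child and $t$ into another, while $s<t<s'$ are all $\approx'$-equivalent. So no $\preceq_r$ can reproduce $\leq$. Moreover, $xs$ lies strictly below $T$ and $xs'$ strictly above, so $x$ does not even act on your labels: condition (2) of a monoid partree fails.

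The missing idea is to record the side of $us$ for \emph{every} $u$. Label $s$ by the pair
\[ P_s:=\{u\mid us\in S_{\leq T}\}, \qquad Q_s:=\{u\mid us\in S_{\geq T}\}. \]
There are finitely many such pairs, by Lemma~\ref{lm:TJPi}(1) applied to the $\Pi$-stable sets $S_{\leq T}$ and $S_{\geq T}$. Define $(P_1,Q_1)\preceq_r(P_2,Q_2)$ iff $Q_1\subseteq Q_2$ and $P_1\supseteq P_2$. This is visibly a partial order compatible with $u(P,Q)=((P:u),(Q:u))$.

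The key lemma, which your proposal lacks, is monotonicity: if $s\approx' t$ and $s\leq t$, then $(P_s,Q_s)\preceq_r(P_t,Q_t)$. For $u\in Q_s$, take $t_0\in T$ with $us\approx' t_0$ and $us\geq t_0$. Then $ut\approx' t_0$ and $ut\geq us\geq t_0$, so $u\in Q_t$; the argument for $P$ is symmetric. Monotonicity gives comparability and uniformity at once: for $\approx'$-equivalent $s,t$ in distinct children, $s<t$ iff $(P_s,Q_s)\prec_r(P_t,Q_t)$.

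Your existential definition of $\preceq_r$ would in any case leave transitivity and antisymmetry open; the explicit order avoids this. With these labels your induction goes through as planned. The stabiliser $\Pi_P\cap\Pi_Q$ equals $\Pi$ only if $P=Q=\Pi$, since both ideals are nonempty and $(P:p)=\Pi$ for $p\in P$; that is, only if $S_y=T$.
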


\begin{proof}
For the root $r$ we set $S_r:=S$ and ${\leq_r}:={\leq'}$, the preorder coming
from the $\Gr(\Pi)$-set $\Gr(S)$ discussed above, characterised by $s
\leq' t \defiff \exists u \in \Pi: us \leq ut$.

To determine the children of $r$ and partition $S=S_r$, we proceed as
follows. First, we let $T\subseteq S$ be any inclusion-wise maximal asymptotic range for $\leq$; see Proposition~\ref{prop:T}. For an element
$s \in S$, we define the pair $(P_s,Q_s)$ of ideals in $\Pi$ by
\[ P_s:=\{u \in \Pi \mid us \in S_{\leq T}\}, \text{ and }
Q_s:=\{u \in \Pi \mid us \in S_{\geq T}\}. \]
Set $\cQ:=\{(P_s,Q_s) \mid s\in S\}$. If $\cQ$ is a singleton, then
$\cQ=\{(\Pi,\Pi)\}$, and we find that $T=S$ and $\approx$
equals $\approx'$ and therefore $\leq$ equals $\leq'$. In this case,
the monoid partree only has one vertex $r$.

Otherwise, we let $\cQ$ be the set of children of $r$. By 
Lemma~\ref{lm:TJPi}.(1), applied to the $\Pi$-stable subsets $S_{\leq T}$ and $S_{\geq T}$,
$\cQ$ is finite. Setting $S_{(P,Q)}:=\{s \in S
\mid (P_s,Q_s)=(P,Q)\}$ for each $(P,Q)\in\cQ$, we obtain a partition of $S$:
\[ S=\bigsqcup_{(P,Q) \in \cQ} S_{(P,Q)}. \]
The action of $\Pi=\Pi_r$ on $\cQ$ is via $(u,(P,Q)) \mapsto
((P:u),(Q:u))$. By Lemma~\ref{lm:TJPi}, this action is good,  $uS_{(P,Q)} \subseteq
S_{u(P,Q)}$ for each $u\in\Pi$, and $S_{(P,Q)}$ is a finitely generated $\Pi_{(P,Q)}$-set,
where $\Pi_{(P,Q)}:=\Pi_P \cap \Pi_Q$, is the stabiliser of $(P,Q)$
in $\Pi$. We define the partial order $\preceq_r$
on $\cQ$ via
\[(P_1,Q_1) \preceq_r (P_2,Q_2) \defiff \\
(Q_1 \subseteq Q_2) \text{ and }
(P_1 \supseteq P_2). \]
We note that, for any $u \in \Pi$, if $(P_1,Q_1) \preceq_r (P_2,Q_2)$,
then also $(Q_1:u) \subseteq (Q_2:u)$ and $(P_1:u) \supseteq (P_2:u)$,
so that $u(P_1,Q_1) \preceq_r u(P_2,Q_2)$.
As an intermezzo, we encourage the reader to consult
Figure~\ref{fig:exn2STbis}, where the constructions above and their
properties below are illustrated.

\begin{figure}
\begin{center}
\includegraphics[scale=.8]{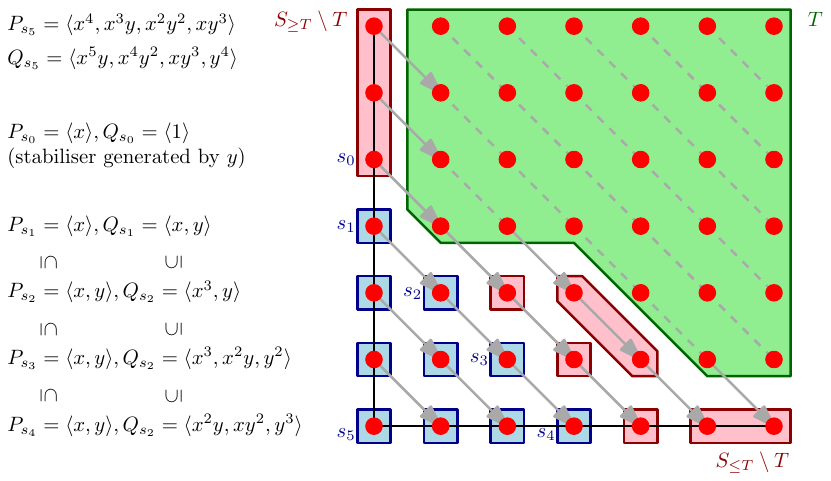}
\caption{The construction in the proof of Theorem~\ref{thm:MonoidPartree}
in the setting of Example~\ref{ex:n2bis}. The pair $(P,Q)$ takes $17$
different values, grouped in the picture. Accordingly, the root $r$ will
have $17$ children. There are three pairs $(P,Q)$ for which $S_{(P,Q)}$
is infinite, but finitely generated as $\Pi_{(P,Q)}$-sets, where the
latter monoid is generated by $y$ or by $x$ or by both. On the pairs
$(P,Q)$ corresponding to $s_1,s_2,s_3,s_4$ (which are all in the same
$\approx'$-class), the partial order $\preceq$ is a total order, but for
instance $(P_{s_5},Q_{s_5})$ is not comparable with $(P_{s_0},Q_{s_0})$.
}
\label{fig:exn2STbis}
\end{center}
\end{figure}

Next, if $s \approx' t$ and $s \leq t$, then we claim that $(P_s,Q_s)
\preceq (P_t,Q_t)$, as required by the definition of a partree. Indeed, for any $u \in Q_s$, we have $us \in S_{\geq
T}$, so there exists a $t_0 \in T$ with $us \approx' t_0$ and $us \geq
t_0$. Then $ut \approx' us \approx' t_0$ and $ut \geq us \geq t_0$, so
$u \in Q_t$. This shows $Q_s \subseteq Q_t$. The inclusion $P_s \supseteq
P_t$ follows in a similar manner. Hence if $s \approx' t$ and $s \leq t$,
then either $(P_s,Q_s) \prec_r (P_t,Q_t)$ or $(P_s,Q_s)=(P_t,Q_t)=:(P,Q)$
and then $s \leq t$ holds in the restriction of $\leq$ to $S_{(P,Q)}$.

This concludes the description of the children of $r$ and the partial
order $\preceq_r$ on them. If a child $y=(P,Q)$ of $r$ satisfies
$\Pi_y=\Pi$, then, since $P,Q$ are nonempty, by Lemma~\ref{lm:Action}, 
we have $P=Q=\Pi$ and hence, since $S_{\leq T} \cap S_{\geq T}=T$, that
$S_y=T$. Since $T$ intersects each equivalence class for $\approx'$ in
the empty set or a single $\approx$-equivalence class, the restriction
of $\leq$ to $T$ equals that of $\leq'$. Hence if we repeat the above
construction for the child $y$, then we will choose the asymptotic
range for the restriction of $\leq$ to $S_y$ equal to all of $S_y$
(as we choose it to be inclusion-wise maximal). Hence the family $\cQ$ for $y$
will be a singleton, so $y$ is then a leaf. For all other children $z$, 
the monoid $\Pi_z$ is, by Lemma~\ref{lm:Action}, generated by a proper
subset of a minimal set of generators of $\Pi$, and hence the theorem
follows by induction.
\end{proof}

An immediate corollary is that every preorder admits a 
description involving finitely many real scalars, as follows. 

\begin{cor} \label{cor:Ineqs}
Let $\Pi$ be a commutative monoid generated by a finite set $U$, let $S$
be a finitely generated $\Pi$-set, and let $\leq$ be a preorder on $\Pi$.
Then the formula $s_0 \leq t_0$, where $s_0,t_0$ stand for variables taking values
in $S$, is equivalent to a finite boolean combination of statements of
the following forms: 
\begin{enumerate}
\item $s_0 \in S'$, 
\item $t_0 \in S'$, and
\item $g(s_0) \leq g(t_0)$;
\end{enumerate}
where $\Pi'$ is some submonoid of $\Pi$ generated by a subset of $U$, $S'$ is some finitely generated $\Pi'$-subset of $S$, and $g:S' \to \RR$
is some function satisfying $g(us)=f(u)+g(s)$ with $f:\Pi' \to \RR$
a monoid homomorphism.
\end{cor}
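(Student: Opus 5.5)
The plan is to combine Theorem~\ref{thm:MonoidPartree} with the group case (Theorem~\ref{thm:Preotree}, or rather Corollary~\ref{cor:Partree}), and to argue by induction on the monoid partree. Fix the preorder $\leq$ (on the $\Pi$-set $S$; the statement's ``on $\Pi$'' should be read this way). Take a monoid partree $\mu$ for $(\Pi,S)$ that determines $\leq$. Two features of its construction will be used: each vertex monoid $\Pi_x$ is generated by a subset of $U$ (this holds by Lemma~\ref{lm:Action}, since stabilisers of children are generated by subsets of $U$ and this property propagates down the tree), and each $S_x$ is a finitely generated $\Pi_x$-subset of $S$. The tree is finite, so there are finitely many vertices.

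By Definition~\ref{de:Partree}, $s_0\leq t_0$ holds if and only if one of finitely many conditions holds. Each condition reads: ``$s_0,t_0\in S_{x_i}$ for $i\le k$ along a root path $x_0,\dots,x_k$, the pair separates at $x_k$ into children $y,z$ with $y\prec_{x_k} z$ (or $x_k$ is a leaf), and the vertex preorders $\leq_{x_i}$ compare $s_0,t_0$ in a prescribed way.'' Membership $s_0\in S_y$ is a statement of type (1), and $t_0\in S_z$ is of type (2). The relation $\preceq_{x_k}$ is a fixed finite combinatorial datum, so each disjunct is a finite boolean combination of memberships together with the atomic comparisons $s_0\leq_{x_i} t_0$ and $t_0\leq_{x_i}s_0$. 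Since there are finitely many vertices and finitely many children per vertex, this gives a finite boolean combination.

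It remains to express $s\leq_x t$ for $s,t\in S_x$. By condition (3) of a monoid partree, $\leq_x$ is the pull-back of a preorder on the $\Gr(\Pi_x)$-set $\Gr(S_x)$. That set is a finitely generated $\Gr(\Pi_x)$-set over a finitely generated abelian group, so Corollary~\ref{cor:Partree} describes it by a group partree $\tilde\tau_x$ with numerical data $(\tilde f_w,\tilde g_w)_w$. Pulling back along $\phi:S_x\to\Gr(S_x)$ via Proposition~\ref{prop:PullBackPartree} gives a partree on $S_x$ with the following data:
\begin{itemize}
\item vertex sets $\phi^{-1}(\tilde S_w)$, which are $\Pi_x$-stable subsets of $S_x$;
\item total orders on children;
\item vertex preorders given by $g_w:=\tilde g_w\circ\phi$, which satisfy $g_w(us)=f_w(u)+g_w(s)$ with $f_w:=\tilde f_w|_{\Pi_x}$ a monoid homomorphism $\Pi_x\to\RR$.
\end{itemize}
Expanding this partree exactly as in the previous paragraph writes $s\leq_x t$ as a boolean combination of memberships in the sets $\phi^{-1}(\tilde S_w)$ and comparisons $g_w(s)\le g_w(t)$. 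Taking $\Pi':=\Pi_x$ and $S':=\phi^{-1}(\tilde S_w)$ gives statements of forms (1)--(3).

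The main obstacle is verifying that each $S'=\phi^{-1}(\tilde S_w)$ is a \emph{finitely generated} $\Pi'$-set, since $g_w$ must be defined on $S'$ and the statement demands finite generation. Each $\tilde S_w$ is a union of $\Gr(\Pi_x)$-orbits. Its preimage is therefore a union of fibres of $S_x\to\Gr(S_x)/\Gr(\Pi_x)$, i.e.\ a union of some of the finitely many ``$\Gr$-orbit classes'' of $S_x$. I would argue that such a class is a $\Pi_x$-stable subset of the finitely generated $\Pi_x$-set $S_x$, and that it is a union of $\Pi_x$-orbits of generators. The reason is that $us$ and $s$ lie in the same $\Gr$-orbit, so each generator orbit $\Pi_x s_i$ lies in a single class. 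Hence $S'$ is generated by the generators of $S_x$ lying in it.

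If one instead prefers to avoid this, one can take $S':=S_x$ itself and set $g_w$ on the relevant orbits, using the group partree's extension to all of $\Gr(S_x)$ constructed in the proof of Corollary~\ref{cor:Partree}. That construction defines $\tilde g$ on all of $\Pi\tilde S_w=\tilde S_w$, and memberships can again be handled by type (1)/(2) statements.
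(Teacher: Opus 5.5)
Your proposal is correct and is essentially the paper's own argument: the paper proves the corollary in one line by combining Theorem~\ref{thm:MonoidPartree} with Corollary~\ref{cor:Partree} applied to the vertex preorders $\leq_x$ (pulled back from $\Gr(S_x)$), and this is exactly your expansion of the partree into membership statements and comparisons $g(s_0)\leq g(t_0)$. Your check that $\phi^{-1}(\tilde{S}_w)$ is the union of the orbits $\Pi_x s_i$ over the generators with $\phi(s_i)\in\tilde{S}_w$, hence finitely generated, fills in a detail the paper leaves implicit, so the fallback in your last paragraph is unnecessary --- and it would not work as stated, since the extension in the proof of Corollary~\ref{cor:Partree} defines $\tilde{g}$ only on $\tilde{S}_w$, not on all of $\Gr(S_x)$.
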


\begin{proof}
This follows from Theorem~\ref{thm:MonoidPartree}, using
Corollary~\ref{cor:Partree} to describe the preorders $\leq_x$ in terms
of functions $g$ as above.
\end{proof}

%%%%%%%%%%%%%%%%%%%%%%%%%%%%%%%%%%%%%%%%%%%

\subsection{Admissibility}

To complete our description of preorders on commutative monoids, we need
to determine which monoid partrees do indeed yield preorders. This will
involve pull-backs of monoid partrees defined as follows.

\begin{de} \label{de:PullBackMPT}
Let $\Pi,\Pi'$ be finitely generated commutative monoids, and let $S,S'$
be a finitely generated $\Pi$-set and a finitely generated $\Pi'$-set,
respectively. Let $\rho:\Pi \to \Pi'$ be a monoid homomorphism and let
$\phi:S \to S'$ be a map such that for all $u \in \Pi$ and all $s \in S$
we have $\phi(us)=\rho(u)\phi(s)$. Let $\mu'$ be a monoid partree for
$(\Pi',S')$. The {\em pull-back} $\mu$ of $\mu'$ along $\rho$ and $\phi$ is
defined as follows: 
\begin{enumerate}
\item The tree underlying $\mu$ is the same as that underlying
$\mu'$; to distinguish the two, we write $x'$ for the vertex
in $\mu'$ corresponding to a vertex $x$ in $\mu$. We further set
$\Pi_x:=\rho^{-1}(\Pi'_{x'})$.
\item For each $x$, we set $S_x:=\phi^{-1}(S'_{x'})$.
\item The action of $\Pi_x$ on the children of $x$ in $\tau$ is given by 
$(uy)'=\rho(u)y'$.
\item The partial order $\preceq_x$ on the children of $x$ is the
pull-back under $y \mapsto y'$ of the partial order $\preceq_{x'}$.
\item The preorder $\leq_x$ on $S_x$ is the pull-back of the preorder
$\leq_{x'}$ on $S'_{x'}$. \qedhere
\end{enumerate}
\end{de}

\begin{lm}
The pull-back of $\mu'$ is a monoid partree for $(\Pi,S)$. 
\end{lm}

\begin{proof}
To check that $\mu$ is a partree for $S$, assume that $x$ is a
non-leaf vertex of $\mu$ and that $s,t \in S_x$ satisfy $s \approx_x
t$. Let $y,z$ be the children of $x$ with $s \in S_y$ and $t \in S_z$.
Now $\phi(s) \approx_{x'} \phi(t)$ by the definition of $\leq_x$ in terms
of $\leq_{x'}$. Furthermore, we have $\phi(s) \in S'_{y'}$ and $\phi(t)
\in S'_{z'}$. 

Since $\mu'$ is a partree for $S'$, $y'$ and $z'$ are
comparable with respect to the partial order $\preceq_{x'}$.  So $y$
and $z$ are comparable with respect to the partial order $\preceq_x$.

We now check the additional axioms for a {\em monoid} partree:
\begin{enumerate}
\item For the root $r$ of $\mu$, we have
$\Pi_r=\rho^{-1}(\Pi'_{r'})=\Pi$. The action of $\Pi_x$ on the children
of $x$ is easily seen to be good, and if $u$ is in the stabiliser of $y$
in $\Pi_x$, then $\rho(u)y'=(uy)'=y'$, so $\rho(u)$ is in the stabiliser
of $y'$ in $\Pi_{x'}$, which is $\Pi_{y'}$. Hence $u$ is in $\Pi_y$.

\item It is easy to verify that $u S_y \subseteq
S_{uy}$.  We show by induction that each $S_x$ is a finitely generated
$\Pi_x$-set. This is true for $x=r$ by assumption. Assume that it is
true for $x$ and let $y$ be a child of $x$. Let $s_1,s_2,\ldots$ be a
sequence in $S_y$. By the induction assumption, there exist $i<j$ and
$u \in S_x$ such that $us_i=s_j$. It follows that $u S_y \cap S_y$ is
nonempty, hence using that $u S_y \subseteq S_{uy}$ and that the sets $S_z$
with $z$ ranging over the children of $x$ form a partition of $S_x$,
we find that $uy=y$, so $u \in \Pi_y$.

\item Assume that $s,t \in S_x$ and $u \in \Pi_x$ satisfy $us \leq_x
ut$. We then need to show that $s \leq_x t$. By the definition of $\leq_x$
we have 
\[ \rho(u)\phi(s) = \phi(us) \leq_{x'} \phi(ut) = \rho(u)\phi(t) \]
and since $\leq_{x'}$ is pulled back from $\Gr(S'_{x'})$, this implies that
$\phi(s)\leq_{x'} \phi(t)$, so that $s \leq_x t$.

\item For $y,z$ children of $x$ with $y \preceq_x z$ we have $y'
\preceq_{x'} z'$, and hence for $u \in \Pi_x$: 
\[ (uy)'=\rho(u)y' \preceq_{x'} \rho(u)z'=(uz)', \]
so that $uy \preceq_x uz$.  

\item For $y$ a child of $x$ and $s,t \in S_y$ suppose that $s \leq_y
t$. This means that $\phi(s) \leq_{y'} \phi(t)$, hence $\phi(s)
\leq_{x'} \phi(t)$, yielding $s \leq_x t$. 
\qedhere
\end{enumerate}
\end{proof}

Recall that, by Theorem~\ref{thm:Preotree}, each preorder $\leq_x$
corresponding to a vertex $x$ in a monoid partree $\mu$ can be realised
via a preotree $\tau_x$ with numerical data. Let $\Sigma_x$ be the
space of numerical data for $\tau_x$. By abuse of notation, we also
write $\mu$ for the (purely combinatorial) data of the partree $\mu$
but with all preorders $\leq_x$ replaced by the corresponding
preotrees $\tau_x$, and we write $\Sigma$ for the
product $\prod_x \Sigma_x$ over all vertices $x$ of $\mu$. For any choice of
$p \in \Sigma$, $\mu$ yields a partree $\mu^p$ for $S$, which in turn
defines a (not necessarily $\Pi$-compatible) preorder $\leq^p$
on $S$.  Similarly, let $\mu'$ be a second monoid partree for $(\Pi,S)$,
replace the preorder $\leq_{x'}$ corresponding to each vertex $x'$ of
$\mu'$ by a corresponding preotree $\tau_{x'}$, let $\Sigma_{x'}$ be
the space of numerical data for $\tau_{x'}$, and set $\Sigma':=\prod_{x'}
\Sigma_{x'}$. Each point $p'$ of $\Sigma'$ also gives a partree $\mu'_p$
and a not necessarily $\Pi$-compatible preorder $\leq^{p'}$ on $S$.

\begin{prop} \label{prop:SemiAlgebraic3}
The locus $\Sigmam$ of $p \in \Sigma$ for which $\mu^p$ is a monoid
partree is a countable union of
$\QQ$-semi-algebraic sets.  Similarly, the locus $\Gamma \subseteq \Sigmam
\times \Sigmam'$ defined by
\[ \Gamma:=\{(p,p') \mid \forall s,t \in S: s \leq^p t \Rightarrow s \leq^{p'} t\} \]
is a countable union of $\QQ$-semi-algebraic sets.
\end{prop}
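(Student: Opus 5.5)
The plan is to go through the conditions in the definitions of a partree and of a monoid partree one by one. For each, I will either note that it involves only the fixed combinatorial data of $\mu$, so that it holds on all of $\Sigma$ or on none of it, or I will rewrite it as one of the implications between preorders already treated in \S\ref{sec:admissible}. The combinatorial data are the tree, the sets $S_x$, the monoids $\Pi_x$, the good actions on children, and the partial orders $\preceq_x$. Conditions (1), (2) and (4) of a monoid partree, and conditions (1) and (2) of a partree, are of this purely combinatorial kind. Condition (3) holds for every $p$, because $\tau_x$ is a preotree for $(\Gr(\Pi_x),\Gr(S_x))$ and $\leq_x$ is by definition its pull-back along $S_x\to\Gr(S_x)$.

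Two conditions depend on $p$. The first is axiom (5): $s\leq_y t\Rightarrow s\leq_x t$ for $s,t\in S_y$. Here $\leq_y$ is pulled back from a preotree $\tau_y$ on $\Gr(S_y)$ and $\leq_x$ from $\tau_x$ on $\Gr(S_x)$. The inclusion $S_y\subseteq S_x$ induces a map $\Gr(S_y)\to\Gr(S_x)$ that is equivariant along $\Gr(\Pi_y)\to\Gr(\Pi_x)$. Lemma~\ref{lm:Inheritance} together with the retraction identity \eqref{eq:Retraction} should show that the implication on $S_y$ is equivalent to the same implication on $\Gr(S_y)$. The point is that $[a,s]\leq[b,t]$ reduces, after multiplying by a suitable $[u]$, to elements of $S_y$. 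So the locus is exactly the one in Corollary~\ref{cor:SemiAlgebraic}, which is a countable union of $\QQ$-semi-algebraic sets. The second is partree condition (3): for $s,t\in S_x$ lying in children $y,z$ that are incomparable under $\preceq_x$, we need $s\not\approx_x t$. There are finitely many such incomparable pairs. For each pair, I choose finitely many $\Pi_x$-generators of $S_y$ and $S_z$, and then the condition says that no $s\in S_y$, $t\in S_z$ satisfy $s\approx_x t$.

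This second condition is where I expect the main obstacle. It is a negated existential over infinitely many pairs, and it is not obviously of the form in Proposition~\ref{prop:SemiAlgebraic1}. My first attempt is to pull back along $S_y\sqcup S_z\to S_x$ to a preotree $\tilde\tau$ on $\Gr(S_y)\sqcup\Gr(S_z)$ via Proposition~\ref{prop:PullBack}; this gives countably many choices. The condition then asks that no element of the $S_y$-part be $\approx$ to an element of the $S_z$-part, with elements restricted to the images of $S_y,S_z$ rather than all of $\Gr$. I would write $s=us_i$ and $t=vt_j$ with $u,v$ ranging over the finitely generated monoid $\Pi_x$. Then $us_i\approx vt_j$ is equivalent to $g$-values agreeing down the preotree, and by the recursion in Proposition~\ref{prop:SemiAlgebraic2} each level becomes a statement about $f_r(u)-f_r(v)$ equalling a fixed difference of $g$-values, over the finitely generated monoid. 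I would handle this by induction on the preotree in the same way as Proposition~\ref{prop:SemiAlgebraic2}. In the rank-$\leq1$ case, condition (4) of Definition~\ref{de:NumericalData} keeps the quantifier over a finite set. In the dense case, the membership of a real number in $f(\Pi_0\Pi_0^{-1}\cdot)$ is determined by the combinatorics, since the coefficients are $\QQ$-independent, so it is constant on irrational-space pieces. The result of this step is a countable union of semi-algebraic sets, and intersecting over the finitely many conditions gives $\Sigmam$.

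For $\Gamma$, I would unwind $\leq^p$ through the partree definition. For each pair of leaves and each branching vertex, $s\leq^p t$ is a finite boolean combination of statements $s\leq_{x}t$, $s\approx_x t$ and of the fixed $\preceq$-comparisons. The implication $\leq^p\Rightarrow\leq^{p'}$ then splits into finitely many implications between pulled-back group preorders on the pieces $S_x\cap S_{x'}$. I would refine both partrees to the common partition $\{S_x\cap S_{x'}\}$; these pieces are finitely generated over the monoid $\Pi_x\cap\Pi_{x'}$ by a Dickson argument as in Lemma~\ref{lm:TJPi}. Each of these implications (including the strict versions) is handled by Corollary~\ref{cor:SemiAlgebraic} after passing to $\Gr$ as above. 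Finite intersections and unions of countable unions stay countable unions, which gives the claim.
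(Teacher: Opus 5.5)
Your treatment of $\Sigmam$ is essentially the paper's: axiom (5) is exactly the implication that Corollary~\ref{cor:SemiAlgebraic} handles for $\Gr(\Pi_y)\to\Gr(\Pi_x)$ and $\Gr(S_y)\to\Gr(S_x)$. The step you call the main obstacle, partree condition (3), is in fact independent of $p$, so your induction there is unnecessary. On the irrational spaces, $f_x$ induces an injection $\Pi_x/\Pi_y\to\RR$, so $\ker(f_x)=\Pi_y$ exactly, and the values of $g_x$ on distinct children lie in distinct $\im(f_x)$-cosets. Hence the relation $\approx$ defined by $(\tau_x,p_x)$, and with it $\approx_x$ on $S_x$, depends only on the combinatorial data. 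The condition therefore holds on all of $\Sigma$ or on none of it.

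The genuine gap is in your argument for $\Gamma$. Your unwinding is fine. For $s,t$ in the same piece, the resulting condition is an implication between two preorders pulled back from group-sets, which pull-backs and Proposition~\ref{prop:SemiAlgebraic1} do handle. For $s\in A$, $t\in B$ in different pieces, however, the condition has the form $\forall u\in\Pi_A,\ v\in\Pi_B:\ us_i\mathrel{R}vt_j\Rightarrow us_i\mathrel{R'}vt_j$. Here $s_i,t_j$ are generators, and $R,R'$ are weak or strict inequalities of a vertex preorder of $\mu$, respectively $\mu'$. This statement is quantified over the finitely generated monoid $\Pi_1:=\{uv^{-1}\mid u\in\Pi_A,\ v\in\Pi_B\}\subseteq\Gr(\Pi)$, not over all pairs of a single group-set. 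Corollary~\ref{cor:SemiAlgebraic} does not cover it. ``Passing to $\Gr$'' is also not an equivalence here, because it replaces $\Pi_1$ by the group it generates.

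Here is a minimal example. Let $\mu$ be a single vertex whose $\leq_r$ is given by $(a,b)\mapsto\alpha a+\beta b$ on $\ZZ^2$. Let $\mu'$ be the monoid partree of the preorder on $\ZZ_{\geq0}^2$ in which $(0,0)$ is smallest and all other elements are equivalent: trivial root, two trivial leaves carrying $\{(0,0)\}$ and $\ZZ_{\geq0}^2\setminus\{(0,0)\}$, the first below the second. Then $\Gamma=\{\alpha>0,\ \beta>0\}$. This comes entirely from the cross-piece condition $\forall s\in\ZZ_{\geq0}^2\setminus\{(0,0)\}:\ (0,0)<_r s$. Passed to $\Gr$, it would read $\forall s\in\ZZ^2:\ (0,0)<_r s$, which never holds.

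What you are missing is Proposition~\ref{prop:SemiAlgebraic2}, together with the finiteness reduction that feeds it. The paper proceeds as follows.
\begin{itemize}
\item It first imposes ${<_r}\Rightarrow{\leq_{r'}}$ on all of $S$, using Proposition~\ref{prop:SemiAlgebraic1}.
\item It treats pairs lying in a common child by induction on $(\mu,\mu')$, after pulling $\mu'$ back to that child.
\item It observes that every remaining failure involves a root equivalence $s\approx_r t$ or $s\approx_{r'}t$, and these do not depend on the numerical data.
\item Consequently $\Pi_0:=\{a\in\Pi_1\mid at\approx_{r'}t\}$ is a fixed finitely generated monoid by Gordan's lemma, and $\{w\in\Pi_1\mid ws\approx_{r'}t\}$ is a finitely generated $\Pi_0$-set by Dickson's lemma.
\item The cross-piece condition thus becomes $\forall a\in\Pi_0:\ aws\geq_r t$ for finitely many generators $w$. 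This, or its strict variant, is semi-algebraic by Proposition~\ref{prop:SemiAlgebraic2}.
\end{itemize}
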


\begin{proof}
For the first statement, note that the only non-automatic requirement is
that for any child $y$ of any vertex $x$, we have ${\leq_x} \Rightarrow
{\leq_y}$. By Corollary~\ref{cor:SemiAlgebraic} applied to the group
homomorphism $\rho: \Gr(\Pi_y) \to \Gr(\Pi_x)$ and the map $\phi:\Gr(S_y)
\to \Gr(S_x)$ induced by the inclusions $\Pi_y \subseteq \Pi_x$ and
$S_y \subseteq S_x$, respectively, this requirement holds in a countable
union of $\QQ$-semi-algebraic subsets of $\Sigma_y \times \Sigma_x$. (The same
applies, of course, to $\mu'$.)

For the second statement, we proceed by induction, so that we may
assume that it holds for all pairs of monoid partrees where both are
at most as large as $\mu$ and $\mu'$, respectively, and at least one is
strictly smaller.

Let $r$ and $r'$ be the roots of $\mu$ and $\mu'$, respectively.
We first observe that ${\leq^p} \Rightarrow {\leq^{p'}}$ implies that
${<_r} \Rightarrow {\leq_{r'}}$. This imposes semi-algebraic constraints on
$p_r,p'_{r'}$ by Proposition~\ref{prop:SemiAlgebraic1}. In what follows,
we assume that these constraints are satisfied.

Assume that $\mu$ is not a single vertex.  To study the implication
$s \leq^p t \Rightarrow s \leq^{p'} t$ we let $x,y$ be the children
of $r$ in $\mu$ with $s \in S_x$ and $t \in S_y$. If $x=y$, then,
since $s \leq_x t$ implies $s \leq_r t$ by the last axiom for monoid
partrees, we have $s \leq^p t$ if and only if this inequality holds
for the monoid partree $\mu_x$ for $(\Pi_x,S_x)$ rooted at $x$.  Then we are
done by induction, since we can replace $\mu$ by $\mu_x$ 
and $\mu'$ by its pull-back to a monoid partree
for $S_x$ under the inclusions $\Pi_x \to \Pi$ and $S_x \to S$. Here
we invoke Proposition~\ref{prop:PullBack} to relate numerical data
for the preotrees $\tau_{z'}$, with $z'$ ranging over the vertices of
$\mu'$, to numerical data for the pull-back. (As in that proposition,
we actually need to consider countably many pull-backs of $\tau_{z'}$
to deal with the entire space $\Sigma_{z'}$.)

The same argument applies when $\mu'$ is not a single vertex and the
children $x',y'$ of $r'$ with $s \in S_{x'}$ and $t \in S_{y'}$ satisfy
$x'=y'$.

Assume next that $\mu$ is a not a single vertex and $x \neq y$. First
assume that $x,y$ are not comparable in $\preceq_r$.  Then it cannot
happen that $s \approx_r t$. If $\mu'$ is a single vertex, then the
implication $s <_r t \Rightarrow s \leq_{r'} t$, which we have already
ensured by semi-algebraic conditions, is sufficient for $s \leq^p
\Rightarrow s \leq^{p'} t$, and we are done. If
$\mu'$ is not a single vertex, then we may assume in addition that $x'
\neq y'$. The only thing that can go wrong now is that $s <_r t$, $s
\approx_{r'} t$, and $x' \succ_{r'} y'$. So assume that $x' \succ_{r'}
y'$. Fix a generator $s$ (one of the finitely many) of the $\Pi_{x'}$-set
$S_{x'}$, and similarly a generator $t$ of the $\Pi_{y'}$-set $S_{y'}$. We
then need that for all $u \in \Pi_{x'}$ and $v \in \Pi_{y'}$ satisfying
$us \approx_{r'} vt$ we have $us \geq_r vt$. Writing $u,v$ also for the
images in $\Gr(\Pi)$ of these elements, we need that in $\Gr(S)$ the
following holds: $uv^{-1} s \approx_{r'} t \Rightarrow uv^{-1} s \geq_r
t$. Let $\Pi_1$ be the submonoid of $\Gr(\Pi)$ consisting of all elements
$uv^{-1}$ with $u \in \Pi_{x'}$ and $v \in \Pi_{y'}$; this is a finitely
generated monoid. Then we need $ws \approx_{r'} t \Rightarrow ws \geq_r t$
for all $w \in \Pi_1$. Let $\Pi_0$ be the submonoid of $\Pi_1$ defined by
\[ \Pi_0:=\{a \in \Pi_1 \mid at \approx_{r'} t\}; \]
this is a finitely generated monoid by Gordan's lemma. Furthermore,
the set 
\[ \{w \in \Pi_1 \mid ws \approx_{r'} t\} \]
is a finitely generated $\Pi_0$-set. Indeed, if $w_1,w_2,\ldots$
is an infinite sequence in the latter set, then, by Dickson's lemma, 
there exist $i<j$ and $a \in \Pi_1$ with $aw_i=w_j$, and it follows
that $a \in \Pi_0$. For each generator $w$ of the latter $\Pi_0$-set,
we now require that $\forall a \in \Pi_0: aws \geq_r t$. This is
a semi-algebraic condition on the numerical data for $\tau_r$ by
Proposition~\ref{prop:SemiAlgebraic2}.

If $x,y$ are comparable with $x \prec_r y$, then we first need to avoid
that $s \approx_r t$ but $s >_{r'} t$. In other words, we need that
for each of the finitely many generators $s,t$ of the $\Pi_x$-set $S_x$
and the $\Pi_y$-set $S_y$, respectively, we have $\forall u \in \Pi_x, v \in \Pi_y:
us \approx_r vt \Rightarrow us \leq_{r'} vt$. This is a semi-algebraic
condition for exactly the same reasons as above. Again, if $\mu'$
consists of $r'$ only, we are done. Otherwise, we may assume that $x'
\neq y'$. Then we have to avoid that $s \approx_r t$ and $s \approx_{r'}
t$ but $x \prec_r y$ and $x' \succ_{r'} y'$. The existence of $s,t$ with these
properties is independent of the numerical data, and if they do exist,
then $\Gamma=\emptyset$. 

If $x,y$ satisfy $x \succ_r y$, then the condition $s \leq^p t$
implies that $s <_r t$. We then need to avoid that $s \approx_{r'} t$
and $x' \succ_{r'} y'$. This is handled by the discussion in the penultimate
paragraph. 

Finally, assume that $\mu$ is a single vertex, so that ${\leq^p}={\leq_r}$.
Then we first require that $s \leq_r t \Rightarrow s \leq_{r'} t$,
which is a semi-algebraic condition on the numerical data for $\tau_r$
and $\tau_{r'}$ by Proposition~\ref{prop:SemiAlgebraic1}. We now need
to avoid that $s \leq_r t$ while $s \approx_{r'} t$ and $x' \succ_{r'}
y'$. This can be translated to semi-algebraic conditions as above,
now appealing to the variant of Proposition~\ref{prop:SemiAlgebraic2}
with a strict inequality.  This completes the proof.
\end{proof}

\begin{thm} \label{thm:Constructible}
Let $\Pi$ be a finitely generated commutative monoid and $S$ a finitely
generated $\Pi$-set, and let $\mu$ be a monoid partree for $(\Pi,S)$, but with,
for every vertex $x$, the preorder $\leq_x$ on $S_x$ replaced by a corresponding preotree
$\tau_x$ for the $\Gr(\Pi_x)$-set $\Gr(S_x)$. Let $\Sigma_x$ be the
space of numerical data for $\tau_x$ and set $\Sigma:=\prod_x \Sigma_x$.
Then the locus 
\[ 
\Gamma:=\{p \in \Sigma \mid \mu^p \text{ is a monoid partree and }
{\leq^p} \text{ is compatible with }
\Pi\} 
\] 
is a countable union of $\QQ$-semi-algebraic sets. 
\end{thm}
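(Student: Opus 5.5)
The locus $\Gamma$ is the intersection of two conditions on $p$: that $\mu^p$ is a monoid partree, and that $\leq^p$ is compatible with the action of $\Pi$. The first condition defines the set $\Sigmam$, which is a countable union of $\QQ$-semi-algebraic sets by the first statement of Proposition~\ref{prop:SemiAlgebraic3}. Since an intersection of two such countable unions is again one, it suffices to show that the second condition, restricted to $\Sigmam$, cuts out a countable union of $\QQ$-semi-algebraic sets.

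Compatibility means that $s \leq^p t \Rightarrow us \leq^p ut$ for all $u\in\Pi$ and $s,t \in S$. Fix a finite generating set $u_1,\ldots,u_n$ of $\Pi$. By induction on word length, it suffices to impose the condition for each generator $u=u_i$ separately. For fixed $i$, I would reformulate the condition as an implication between two monoid partrees on $S$. Let $\phi_i:S\to S$ be the map $s\mapsto u_is$; it is $\Pi$-equivariant because $\Pi$ is commutative. Pull $\mu^p$ back along $\rho=\id_\Pi$ and $\phi_i$ as in Definition~\ref{de:PullBackMPT}, obtaining a monoid partree $\nu_i$ for $(\Pi,S)$ that defines the relation $s\mapsto\phi_i(s)$ compared under $\leq^p$. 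Compatibility with $u_i$ then becomes ${\leq^p}\Rightarrow{\leq^{\nu_i}}$.

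The preorders at the vertices of $\nu_i$ are pull-backs of the $\leq_x$ along the maps $\Gr(S_x')\to\Gr(S_x)$ induced by $\phi_i$ and by the inclusions of the relevant monoids. So I would apply Proposition~\ref{prop:PullBack}, which produces countably many pull-back preotrees $\tilde\tau_x$. For each of these, a $\QQ$-semi-algebraic subset of $\Sigma_x$ maps into the corresponding space of numerical data, and these subsets cover $\Sigma_x$. Choosing one such option at every vertex gives countably many combinatorial types of $\nu_i$ with preotrees attached, each with a semi-algebraic domain $\Omega\subseteq\Sigma$ and a polynomial (indeed linear) pull-back map $\Omega\to\Sigma'$. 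For each type, the second statement of Proposition~\ref{prop:SemiAlgebraic3} shows that the locus of pairs $(p,p')$ with ${\leq^p}\Rightarrow{\leq^{p'}}$ is a countable union of $\QQ$-semi-algebraic sets. Taking its preimage under $p\mapsto(p,\text{pull-back of }p)$, and using real quantifier elimination or simply composition with the linear map, gives a countable union of $\QQ$-semi-algebraic subsets of $\Omega$. Taking the union over types and the intersection over $i=1,\ldots,n$ completes the argument.

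The main obstacle is justifying that the pull-back of $\mu^p$ along $\phi_i$ genuinely falls under Proposition~\ref{prop:SemiAlgebraic3}. The sets $S_x$ of the pulled-back partree are $\phi_i^{-1}(S_x)$, which are finitely generated by the argument in the lemma following Definition~\ref{de:PullBackMPT}. Moreover, the vertex preorders must be described by preotrees on $\Gr(\phi_i^{-1}(S_x))$, compatible with the numerical data via Proposition~\ref{prop:PullBack}. I would check this vertex by vertex, noting that $\phi_i$ induces an equivariant map $\Gr(\phi_i^{-1}(S_x))\to\Gr(S_x)$ over the group homomorphism $\Gr(\Pi_x')\to\Gr(\Pi_x)$. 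If this causes trouble, the fallback is to handle compatibility directly, following the recursive case analysis in the proof of Proposition~\ref{prop:SemiAlgebraic3}. In that approach, one compares children $y,z$ of a vertex with $u_iy,u_iz$, and reduces each condition to finitely many statements of the form covered by Propositions~\ref{prop:SemiAlgebraic1} and~\ref{prop:SemiAlgebraic2}, using finite generation of the sets $S_y$ over the monoids $\Pi_y$.
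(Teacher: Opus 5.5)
Your proposal is correct and is essentially the paper's own proof. You split off the monoid-partree locus using the first part of Proposition~\ref{prop:SemiAlgebraic3}, reduce compatibility to the generators $u_i$, pull $\mu$ back along the identity of $\Pi$ and $\phi_i\colon s\mapsto u_is$ (with the countably many preotree pull-backs of Proposition~\ref{prop:PullBack} whose domains cover $\Sigma$), apply the second part of Proposition~\ref{prop:SemiAlgebraic3} to ${\leq^p}\Rightarrow{\leq^{p'}}$, and intersect over $i$. The points you flag as the main obstacle are covered by the lemma following Definition~\ref{de:PullBackMPT} together with Proposition~\ref{prop:PullBack}, so the fallback case analysis is not needed.
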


\begin{proof}
The locus where $\mu^p$ is a monoid partree is indeed a countable
union of semi-algebraic sets, by Proposition~\ref{prop:SemiAlgebraic3}. 

Let $u_1,\ldots,u_n$ be generators of $\Pi$. Then the additional
constraints on $p$ are that for all $i=1,\ldots,n$ and all $s,t \in S$
we have $s \leq^p t \Rightarrow u_i s \leq^p u_i t$.

Fix any $i$, and let $\phi_i:S \to S$ be the map defined by
$\phi_i(s):=u_i s$. Since $\Pi$ is commutative, this map is $\Pi$-equivariant. Let $\mu'$ be the
pull-back of $\mu$ along the identity $\Pi \to \Pi$ and the map $\phi_i$.  (More
precisely, we have to consider countably many such pull-backs, because
we pull back preotrees as in Corollary~\ref{cor:SemiAlgebraic}.) Let
$\Sigma'$ be the product of the spaces of numerical data for the preotrees
in $\mu'$. So we have a pull-back map $p \mapsto p'$ from a
$\QQ$-semi-algebraic subset of $\Sigma$ to $\Sigma'$, and as we vary the preotrees in $\mu'$
as in Corollary~\ref{cor:SemiAlgebraic}, the domains of these maps
together cover $\Sigma$. For each of these choices, the set
\[ \{(p,p') \mid {\leq^p} \Rightarrow {\leq^{p'}}\} \]
is a countable union of $\QQ$-semi-algebraic sets by
Proposition~\ref{prop:SemiAlgebraic3}. This shows that the locus
\[ \{p \in \Sigma \mid \forall s,t \in S: s \leq^p t \Rightarrow u_i s
\leq^p u_i t\} \]
is a countable union of $\QQ$-semi-algebraic sets. Intersecting these loci
for $i=1,\ldots,n$ gives the result.
\end{proof}

%%%%%%%%%%%%%%%%%%%%%%%%%%%%%%%%%%%%%%%%%%%

\section{Closed points versus orders} \label{sec:Closed}

%%%%%%%%%%%%%%%%%%%%%%%%%%%%%%%%%%%%%%%%%%%

If $\Pi$ is a commutative monoid, then for each {\em order} ${\leq}\in \cP(\Pi)$, 
the singleton $\{\leq\}$ is closed: any preorder
$\leq'$ in its closure must satisfy $s<t \Rightarrow s <'t$ for all $s,t\in\Pi$, and the
fact that $\leq$ is an order ensures that $\leq'$ equals $\leq$. This
leads to the natural question as to when the converse holds; that is, for which commutative monoids $\Pi$ are all
closed points in $\cP(\Pi)$ orders? Since $\cP(\Pi)$ is spectral, by
Theorem~\ref{thm:Spectrality2}, every closed subset contains a closed
point, and hence this question is equivalent to asking when the closure of every
preorder ${\leq}\in\cP(\Pi)$ contains an order.

For finitely generated {\em groups} $\Pi$, the answer is given by
Proposition~\ref{prop:Zn}: this happens if and only if $\Pi$ has no
torsion, i.e., if $\Pi$ is isomorphic to $\ZZ^n$ for some $n\in\ZZ_{\geq 0}$. In this
section we use Theorem~\ref{thm:MonoidPartree} and its proof to establish
the following result for $\Mon_n$.

\begin{thm} \label{thm:ClosedPoints}
Fix $n \in \ZZ_{\geq 0}$. All closed points in 
$\cP(\Mon_n)$ are orders if and only if $n \leq 2$. 
\end{thm}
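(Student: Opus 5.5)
We split the statement into the two directions $n\le 2$ and $n\ge 3$, using throughout the reformulation from the beginning of this section. Since $\cP(\Mon_n)$ is spectral, every closed point is an order if and only if the closure of every preorder contains an order. The closure of $\leq$ consists of all $\leq^*$ with $s<t\Rightarrow s<^*t$, so it suffices to show that every preorder on $\Mon_n$ admits a refinement that is an order. The cases $n=0,1$ are immediate. For $n=1$ we use the description in Example~\ref{ex:Mon1Z1}: the closed points $\pm\infty$ are orders, and each $\pm i$ has $\pm\infty$ in its closure.

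For $n=2$ we take a preorder $\leq$ on $\Mon_2$ and use the structure from the proof of Theorem~\ref{thm:MonoidPartree}. The root carries $\leq'$, which is pulled back from $\Gr(\Mon_2)=\ZZ^2$. By Proposition~\ref{prop:Zn} we can refine that preorder on $\ZZ^2$ to an order $\leq^\circ$. We then define $\leq^*$ lexicographically by
\[ s<^*t \iff s<t, \text{ or } \bigl(s\approx t \text{ and } s<^\circ t\bigr). \]
The key point is whether this is compatible with multiplication. Suppose $s\approx t$ and $s<^\circ t$. Then $us<^\circ ut$ because $\leq^\circ$ is a group order, while $us\le ut$ and $ut\le us$ are automatic. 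So $us\approx ut$ and $us<^\circ ut$, which gives $us<^*ut$. Transitivity is the standard check for a lexicographic construction.

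This argument would seem to work for every $n$, which cannot be right, since the theorem fails for $n\ge3$. So I have to recheck which condition is really needed. The lexicographic refinement does yield a preorder refining $\leq$, and it is antisymmetric because $\leq^\circ$ is an order on $\ZZ^2$ and $\Mon_2\hookrightarrow\ZZ^2$. The flaw is in the definition of the closure. Lying in the closure requires $s<t\Rightarrow s<^*t$, and this holds. So on reflection the argument seems to prove the claim for all $n$, which contradicts St.~John's example. The catch must be that the closure condition and the order condition are fine, but the closure of a point in $\cP(\Mon_n)$ should be taken with respect to the topology, and the lexicographic refinement is in the closure. I expect the resolution is that compatibility fails when $s\approx t$ but $us<ut$ is \emph{not} preserved in reverse: we need $s<^*t\Rightarrow us\le^* ut$ only, which holds. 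I therefore expect the real subtlety to lie elsewhere, and I would check the argument carefully on the example before committing to it.

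For $n\ge3$ we would exhibit St.~John's preorder, a preorder built via a monoid partree whose root data and ideals force some $s\approx t$ with $us<ut$ for some $u$. For such a preorder, any refinement $\leq^*$ that is an order has to break the tie $s\approx t$. Every preorder on a monoid satisfies $s\le^* t\Rightarrow us\le^* ut$, so whichever way the tie is broken must be consistent with the strict relations already present among all multiples of $s$ and $t$. The plan is to arrange two multipliers $u,v$ with $us<ut$ and $vt<vs$ while $s\approx t$. Then $s<^*t$ would force $vs\le^*vt$, contradicting $vt<^*vs$, and symmetrically for $t<^*s$. Hence no order lies in the closure. This also reveals the error in the $n=2$ argument above: the relation $s\approx t$ in the monoid does not imply $us\approx ut$ in general. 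Ties can split under multiplication, so the lexicographic refinement is not compatible. The $n=2$ direction therefore needs the real work: using Theorem~\ref{thm:MonoidPartree} for $\Mon_2$, the children monoids are generated by at most one variable, so the $\approx$-classes that split do so in a one-dimensional, monotone fashion, and we break ties consistently along that direction. I expect this $n=2$ case analysis to be the main obstacle.
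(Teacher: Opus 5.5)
\paragraph{Verdict.} Your opening reduction is correct: a closed point is an order iff every closure contains an order, i.e.\ an order $\le^*$ with $s<t\Rightarrow s<^*t$. The cases $n\le 1$ are also fine. But the proposal proves neither direction for $n\ge2$, and the diagnosis you settle on is backwards.

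\paragraph{Ties never split.} If $s\approx t$, then $us\le ut$ and $ut\le us$, so $us\approx ut$; that is, $\approx$ is a congruence. So the mechanism you propose for $n\ge3$ (some $s\approx t$ with $us<ut$ and $vt<vs$) cannot occur in any preorder, and it is not how St.~John's example works. The real phenomenon is the reverse: a \emph{strict} inequality can collapse. For instance, in Example~\ref{ex:n2} we have $y<x$ but $xy\approx x^2$.

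\paragraph{Where the $n=2$ construction fails.} In your compatibility check you only treated the second disjunct and skipped the case $s<t$. Suppose $s<t$, $s\approx' t$ and $t<^\circ s$. Then $s<^*t$, but for a $u$ with $us\approx ut$ you get $ut<^*us$. Since $\le^\circ$ was chosen only to refine the strict part of the root preorder $\le'$, nothing rules this out.

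\paragraph{The right reformulation.} Use Lemma~\ref{lm:Cancellative}. Since $\Mon_n$ is cancellative, every order in $\overline{\{\le\}}$ is pulled back from an order on $\ZZ^n$, hence strictly monotone. So $\overline{\{\le\}}$ contains an order iff some group order on $\ZZ^n$ has every difference $\beta-\alpha$ with $x^\alpha<x^\beta$ in its positive cone. That order then lies in the closure itself, so no tie-breaking construction is needed.

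\paragraph{Direction $n\ge3$.} With this reformulation, you need strict inequalities whose differences sum to $0$. In Example~\ref{ex:Gavin} one has $x^2yz<'z^3,x^5,xy^3$ while $(x^2yz)^3=z^3\cdot x^5\cdot xy^3$. Strict monotonicity of an order would then give $(x^2yz)^3<''(x^2yz)^3$. For $n>3$, pull back along $\Mon_n\to\Mon_3$.

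\paragraph{Direction $n=2$.} You must show this summing-to-zero can never happen. The observation that ``children monoids are generated by at most one variable'' does not achieve that. The paper argues by cases on the root preorder $\le_r$ on $\ZZ^2$:
\begin{enumerate}
\item If $\le_r$ is an order, then ${\le}={\le_r}$.
\item If $\le_r$ is given by a primitive vector $(a,b)$, the $\approx_r$-classes lie on lines parallel to $(b,-a)$. The chain $(i,j)\ge(i-b,j+a)\ge\cdots$ shows that all strict inequalities inside these classes point in one direction. Hence the lexicographic order with matrix $\left(\begin{smallmatrix} a&b\\ -b&a\end{smallmatrix}\right)$, possibly with the second row negated, lies in the closure.
\item If $\le_r$ is trivial, the asymptotic range $T$ is a single $\approx$-class and an ideal, and one may assume $(0,0)$ is minimal. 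If $T$ misses an axis, a coordinate lexicographic order works. Otherwise one takes the extreme vectors $(-x_1,y_1)$ and $(x_2,-y_2)$ of $D=\{u-v\mid u>v\}$ adjacent to $-\Pi$. Using the lattice triangles $\Delta_1,\Delta_2$ and a determinant argument, one shows that $D$ lies in an open half-plane. This yields a $w$ with $\QQ$-independent coordinates that is positive on $D$.
\end{enumerate}
This genuinely two-dimensional convexity argument is the heart of the $\Leftarrow$ direction, and your proposal does not contain it.
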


We start with a general result about $\cP(\Pi)$ for cancellative commutative 
monoids $\Pi$. 

\begin{lm} \label{lm:Cancellative}
Let $\Pi$ be a cancellative commutative monoid and let $\phi:\Pi \to \Gr(\Pi)$
be the canonical homomorphism. Then $\phi^*:\cP(\Gr(\Pi)) \to \cP(\Pi)$
has all {\em orders} in $\cP(\Pi)$ in its image.  In particular,
if ${\leq}\in \cP(\Pi)$ is an order, then for all $u,v,w \in \Pi$, we
have $v < w \Rightarrow uv < uw$.
\end{lm}

\begin{proof}
Let ${\leq}\in \cP(\Pi)$ be an order. We claim that
${\leq}=\phi^*(\phi_*(\leq))=:{\leq'}$. We always have $v \leq w
\Rightarrow v \leq' w$. For the converse, assume $v \leq' w$. This
means that  $uv \leq uw$ for some $u \in \Pi$. If $uv=uw$,
then, since $\Pi$ is cancellative, $v=w$, so also $v \leq w$.
Otherwise, since $\leq$ is an order, $uv < uw$, and hence $v<w$. 
In both cases, $v \leq w$. This proves the
claim. The last statement follows because this implication holds for
preorders on $\Gr(\Pi)$.
\end{proof}

The following example was communicated to us by Gavin St.~John.

\begin{ex} \label{ex:Gavin}
In the polynomial ring $K[x,y,z]$, consider the binomial ideal $I$
generated by
\[
G:=\{z^4-z^3,\ y z^3 -z^3,\ y^2 - y z,\ x z^2 - y z^2,\ x^2 z - z^2,\
 x^3 - x^2 y\}.
\]
Then $G$ is a Gr\"obner basis of $I$ relative to the lexicographic order
with $x>y>z$, and the first monomial in each binomial above is the
leading monomial. Let $\Pi$ be the image of $\Mon_3$ in $K[x,y,z]/I$.
So $\Pi$ is spanned by the (images of the) standard monomials relative
to $I$. A straightforward check shows that these standard monomials
are precisely those in the list
\[
L=(1,\ x,\ y,\ z,\ x^2,\ x y,\ x z,\ y z,\ x^2 y,\ z^2,\ x y z,\ y
z^2,\ z^3).
\]
On $\Pi$ we define an order by setting $u \leq v$ when the monomial
in $L$ representing $u$ is equal to, or appears earlier in $L$
than, the monomial 
representing $v$. A straightforward check shows that $\leq$ is compatible
with the multiplication. For instance, we have $x^2\leq yz$ and in $\Pi$
we have
\[ x\cdot (x^2) = x^3 = x^2 y \leq xyz = x(yz), \]
where the second equality follows by reducing $x^3$ modulo $G$.
The order $\leq$ on $\Pi$ pulls back to a preorder $\leq'$ on $\Mon_3$
for which $\approx'$ has $13$ equivalence classes. 
The interesting feature of this preorder is that it does not contain
an {\em order} on $\Mon_3$ in its closure in $\cP(\Mon_3)$. Indeed, in
$\Pi$,
\[ x^2yz=yz^2<z^3=x^5=xy^3, \]
where again the equalities follow by reducing modulo $G$. Therefore in
$\Mon_3$,
\[ x^2yz <' z^3,\ x^5,\ xy^3.\]
Hence any {\em order} $<''\,\in \cP(\Mon_3)$ in the closure of $\{<'\}$
must also satisfy
\[ x^2yz<'' z^3,\ x^5,\ xy^3.\]
But then, by 
Lemma~\ref{lm:Cancellative}, since $\Mon_3$ is cancellative, 
we would have
\[ (x^2yz)^3<''z^3 \cdot x^5 \cdot xy^3 = (x^2 yz)^3, \]
a contradiction.
\end{ex}

\begin{proof}[Proof of Theorem~\ref{thm:ClosedPoints}]
Example~\ref{ex:Gavin} gives a preorder $\leq'$ on $\Mon_3$ that does
not have any order in its closure. It is easy to see that, for $n \geq
4$, the pull-back of this preorder along the homomorphism $\Mon_n \to \Mon_3$
sending $x_i$ with $i \leq 3$ to itself and $x_i$ for $i>3$ to $1$
has the same property. This proves the implication $\Rightarrow$.

The implication $\Leftarrow$ is trivial for $n=0$ and easy
for $n=1$; see Example~\ref{ex:Mon1Z1}. In the remainder of the proof,
we assume that $n=2$. Since we will
argue geometrically, it is convenient to work with $\Pi:=\ZZ_{\geq
0}^2 \cong \Mon_2$ instead of $\Mon_2$. So let $\leq$ be a preorder on
$\Pi$. We have to show that $\overline{\{\leq\}}$ contains an order. By
Theorem~\ref{thm:MonoidPartree}, $\leq$ is given by a monoid partree $\mu$
for $(\Pi=\ZZ_{\geq 0}^2,S=\ZZ_{\geq 0}^2)$. Let $r$ be the root of $\mu$
and let $\leq_r$ be the preorder on $\Gr(\Pi) = \ZZ^2$ at $r$. If $\leq_r$
is an order, then ${\leq}={\leq_r}$ and we are done. Otherwise, there are
two possibilities: either
\[ (i,j) \leq_r (k,l) \Leftrightarrow ai+bj \leq ak+bl \]
for some vector $(a,b) \in \ZZ^2$ with $\gcd(a,b)=1$ (one of the gray
points in Figure~\ref{fig:PZ2}) or $\leq_r$ is trivial (the blue
point).

Assume that we are in the former case (an instance of this is
Example~\ref{ex:n2bis}; see Figure~\ref{fig:exn2bis}). In this case, the equivalence classes
of $\approx_r$ on $\Mon_2$ are half-lines or line segments parallel to
the vector $(b,-a)$. Suppose that two points $(i,j),(k,l)$ in the same
$\approx_r$-class satisfy $(i,j)<(k,l)$. Since $\gcd(b,-a)=1$, we have
$(i,j)-(k,l) = c(b,-a)$ for some $c \in \ZZ \setminus \{0\}$. If $c>0$,
then we claim that $(b,0)<(0,a)$. Indeed, otherwise we would have
\[ (i,j)=(i-b,j) + (b,0) \geq (i-b,j)+(0,a) = (i-b,j+a) \geq \ldots \geq 
(i-cb,j+ca)=(k,l), \]
a contradiction. We find that for all pairs $(i',j') \approx_r (k',l')$
with $(i',j')<(k',l')$, the difference $(i',j')-(k',l')$ is a positive
multiple of $(b,-a)$. Then the order $\leq'$ on $\Pi$ defined by
\[ u \leq' v \Leftrightarrow Au \lex Av \text{ where } 
A=\begin{pmatrix} a & b \\ -b & a \end{pmatrix} \]
is in the closure of $\{\leq\}$ in $\cP(\Pi)$. A similar
analysis applies to the case where $(b,0)>(0,a)$, in which case the
second row of $A$ has to be multiplied by $-1$.

Finally, assume that $\leq_r$ is trivial (an instance of this is
Example~\ref{ex:n2}; see Figure~\ref{fig:exn2}). Then $\approx_r$ has
a single equivalence class and hence the asymptotic range $T \subseteq \Pi$ used
in the construction of $\mu$ at $r$ must consist of at most
one $\approx$-equivalence class. Since $T$ is also a nonempty ideal in
$\Pi$, there is only one possibility for $T$ 
(in Figure~\ref{fig:exn2}, this is the set of purple points).
If $(0,0) \leq T$, then it follows that $T$ is the
largest $\approx$-class, and if $(0,0) \geq T$, then it follows that
$T$ is the smallest. Without loss of generality, assume the former, 
and hence that $(0,0)$ is the smallest element with respect to $\leq$. 

Then it follows that the ideals $P_s \subseteq \Pi,\ s \in \Pi$ from
the proof of Theorem~\ref{thm:MonoidPartree} are all equal to $\Pi$,
while the ideals $Q_s \subseteq \Pi$ are equal to $(T-s) \cap \Pi$.
That proof shows that the set $\{Q_s \mid s\in \Pi\}$ is totally ordered
by inclusion. 

Let $a \geq 0$ be the minimal first coordinate among all elements of
$T$. Assume first that $a>0$ and let $b \geq 0$ be minimal such that $(a,b) \in
\T$. We then claim that $(a+1,i) \in T$ for all $i=0,\ldots,b-1$.  
Indeed,
otherwise we would have $(1,0) \in Q_{(a-1,b)}  \setminus Q_{(a,i)}$ while also
$(0,b-i) \in Q_{(a,i)} \setminus Q_{(a-1,b)}$, a contradiction. 
Hence
$T$ is generated either by $(a,0)$ (if $b=0$) or by $(a,b)$ and
$(a+1,0)$. In both cases, we have
\[ u<v \Rightarrow Au \lix Av \text{ where } A=\begin{pmatrix} 1 & 0
\\ 0 & 1 \end{pmatrix}, \]
and so the right-hand side defines an order in the closure of $\leq$.
A similar analysis applies if the minimal second coordinate among all
elements of $T$ is $>0$; then the rows of $A$ need to be swapped. So it
remains to settle the case where $T$ contains a point on the $x$-axis
and a point on the $y$-axis.

If $T=\Pi$, then ${\leq}={\triv}$ and {\em any} order is in the closure of $\leq$,
so we may assume that $T \neq \Pi$. Consider the set
\[ D:=\{u-v \mid u,v \in \Pi \text{ and } u>v\} \subseteq \ZZ^2, \] 
which then contains $T$ since any element of $T$ is strictly greater
than $(0,0)$. Also, since $(0,0)$ is the smallest element, we have 
$D \cap (-\Pi)=\emptyset$. Furthermore, for $(a,b) \in D$ the following are easily satisfied:
\begin{enumerate}
\item if $a \leq 0 \leq b$, then in fact $(0,b)>(-a,0)$; 
\item if $a \geq 0 \geq b$, then in fact $(a,0)>(0,-b)$; 
\item $(-a,-b) \notin D$; and
%\item if $d \in \ZZ_{\geq 1}$ is a common divisor of $a$
%and $b$, then $(a/d,b/d) \in D$; 
\item $(a,b) + \Pi \subseteq D$.
\end{enumerate}
The assumption that $T$ contains a point on the $x$-axis and a point
on the $y$-axis implies that, in the definition of $D$, there are only
finitely many choices for $v$. Then, looking clockwise from the negative
quadrant $-\Pi$, one encounters (up to scaling) a unique first vector in
$D$, and this is of the form $(-x_1,y_1) \neq (0,0)$ with $x_1,y_1 \geq
0$. Similarly, looking counterclockwise from $-\Pi$, one encounters a
unique first vector of the form $(x_2,-y_2) \neq (0,0)$ with $x_2,y_2
\geq 0$. 

Assume that the determinant $\delta:=(-x_1)(-y_2)-x_2y_1$ is $\geq
0$. Then in fact $\delta>0$, since $D$ does not contain vectors in
diametrically opposite directions. Note that $\delta>0$ is equivalent
to the condition that the counterclockwise angle from $(-x_1,y_1)$ to
$(x_2,-y_2)$ is $<\pi$. We will show that this leads to a contradiction.

We have $(0,y_2)<(x_2,0)$. If $(0,y_2) < (c,d)$ for some other
lattice point $(c,d)$ in the triangle $\Delta_2$ with vertices
$(0,0),(x_2,0),(0,y_2)$, then the vector 
$(c,d)-(0,y_2) \in D$ is steeper than $(x_2,-y_2)$, 
a contradiction to the choice of $(x_2,-y_2)$.  So we have $(0,y_2) \geq
(c,d)$ for those lattice points. Similarly, we have $(x_1,0)<(0,y_1)$
and $(x_1,0) \geq (a,b)$ for all other lattice points $(a,b)$
in the triangle $\Delta_1$ with vertices $(0,0),(x_1,0),(0,y_1)$.

Now assume that $x_1 \leq x_2$. Then $\delta>0$ implies that the point
$(a,b):=(x_2,0)+(-x_1,y_1)$ lies in $\Delta_2$, and we have
\[ (0,y_2)<(x_2,0)=(x_1,0)+(x_2-x_1,0) \leq (0,y_1)+(x_2-x_1,0)=(a,b), 
\]
a contradiction to the property of lattice points in $\Delta_2$.
Similarly, the assumption that $y_1 \geq y_2$ leads to a contradiction
to the property of $\Delta_1$. 

So we are left with the case where $x_1 > x_2$ and $y_1 < y_2$. But
then 
\[ -D \ni -(-x_1,y_1)=(x_1,-y_1) \in (x_2,-y_2)+\Pi \subseteq D, \]
a contradiction to the fact that $D \cap (-D)=\emptyset$. 

This proves that $\delta<0$, so the counterclockwise angle from
$(-x_1,y_1)$ to $(x_2,-y_2)$ is $>\pi$, and the clockwise angle is
$<\pi$. Then there exists a vector $w$ that has a positive inner product
with both of these vectors, and indeed we may choose the coordinates of
$w$ to be linearly independent over the rational numbers. By construction,
$w$ has positive inner product with {\em all} vectors in $D$, and hence we have
\[ u<v \Rightarrow w^Tu < w^Tv. \]
Now the order on $\Pi$ defined by the right-hand side is in the
closure of $\{\leq\}$. 
\end{proof}

%%%%%%%%%%%%%%%%%%%%%%%%%%%%%%%%%%%%%%%%%%%

\section{Proofs of Theorems~\ref{thm:Spectrality}--\ref{thm:SAnalogues}} \label{sec:Mains}

%%%%%%%%%%%%%%%%%%%%%%%%%%%%%%%%%%%%%%%%%%%

\begin{proof}[Proof of Theorem~\ref{thm:Spectrality}]
The spectrality of $\cP(\Pi)$ follows from Theorem~\ref{thm:Spectrality2}, 
applied in the special case where $S=\Pi$ with $\Pi$ acting on itself by 
multiplication. In the case $\Pi=\ZZ_{\geq 0} \cong \Mon_1$, Example~\ref{ex:Mon1Z1}
provides an infinite decreasing chain of irreducible closed subsets
\[ \cP(\ZZ_{\geq 0})=\overline{\{0\}} \supsetneq \overline{\{1\}} \supsetneq
\ldots,\]
showing that $\cP(\ZZ_{\geq 0})$ is not Noetherian and has infinite
Krull dimension. For $\Pi=\ZZ_{\geq 0}^n$ with $n \geq 2$, projecting 
on the first component and applying Proposition~\ref{prop:OpenEmbedding}
yields that $\cP(\ZZ_{\geq 0}^n)$ also is not Noetherian and has infinite
Krull dimension. Finally, for $\Pi=\ZZ^n$, the relevant results are
given in Proposition~\ref{prop:Zn}.
\end{proof}

\begin{proof}[Proof of Theorem~\ref{thm:Main} and its analogue for
$\Pi$-sets]
Let $\Pi$ be a finitely generated commutative monoid and
let $S$ be a finitely generated $\Pi$-set.  The existence of
countably many admissible sets parameterising all preorders
on $S$ follows from Theorem~\ref{thm:MonoidPartree} and
Theorem~\ref{thm:Constructible}, using Remark~\ref{re:Admissible}. Indeed, the set of monoid partrees
for $(\Pi,S)$, with the total orders $\leq_x$ replaced by preotrees for
$(\Gr(\Pi_x),\Gr(S_x))$, is countable, and for each of these choices of
combinatorial data, the locus of numerical data $p$ that yield a monoid
partree $\mu^p$ and a $\Pi$-compatible preorder $\leq^p$ is a union
of countably many admissible sets. Furthermore, part (2) follows from
Proposition~\ref{prop:SemiAlgebraic2} with $\Pi_0=\{1\}$.

It remains to show that if $\mu$ is a monoid partree with its total orders
$\leq_x$ replaced by preotrees $\tau_x$ for $(\Gr(\Pi_x),\Gr(S_x))$,
$\Sigma=\prod_x \Sigma_x$ is the product of the sets of numerical data for
the preotrees $\tau_x$, and $\Omega \subseteq \Sigma$ is a
$\QQ$-semi-algebraic 
set so that for all $p \in \Omega$ we have a monoid partree $\mu^p$
defining a preorder $\leq^p$ on the $\Pi$-set $S$, then the map $\Omega
\to \cP_\Pi(S),\ p \mapsto {\leq^p}$ is continuous. Forgetting about $\Omega$,
it suffices to show that for all $s,t \in S$, the set of $p \in \Sigma$
for which the (not necessarily $\Pi$-compatible) preorder $\leq$
satisfies $s <^p t$ is closed in $\Sigma$. We proceed by induction on the
preotree $\mu$. Let $r$ be its root. If $\mu$ consists only of
$r$, then $\Sigma=\Sigma_r$ is the space of numerical data for $\tau_r$,
and the result follows from Proposition~\ref{prop:Continuous}. So assume
that $\mu$ is not a single vertex and let $x,y$ be the children of $r$
with $s \in S_x$ and $t \in S_y$. Then the inequality $s <^p t$ holds
if either $s <_r t$ (which is a closed condition on the numerical data
$p_r$ for $\tau_r$, again by
Proposition~\ref{prop:Continuous}) or $s \approx_r t$ (which is
independent of $p$) and then either $x \prec_r y$ (which is independent
of $p$) or $x=y$ and then $s <^{p'} t$ holds in the preorder
on $S_x$ determined by the numerical data $p'$ for the subtree of $\mu$
rooted at $x$. Hence we are done by induction.
\end{proof}

\begin{proof}[Proof of Theorem~\ref{thm:Ineqs} and its analogue for
$\Pi$-sets]
We need to show that for any preorder on the $\Mon_n$-set $\Mon_n
\times [m]$, the inequality $(x^{\alpha},j) \leq (x^{\beta},l)$ is
equivalent to a finite boolean combination of inequalities of the forms
$\alpha_i \leq a$; $\beta_i \leq a$; and $\langle \alpha,\mu \rangle
\leq \langle \beta,\mu \rangle + c$ for suitable $i \in [m]$, $a \in
\ZZ_{\geq 0}$, $\mu \in \RR^n$, and $c \in \RR$. This follows from
Corollary~\ref{cor:Ineqs}. Indeed, if $\Pi'$ is a submonoid of $\Mon_n$
generated by a subset of the variables and $S' \subseteq \Mon_n \times
[m]$ is a finitely generated $\Pi'$-set, then the statement $(x^{\alpha},j) \in S'$
is equivalent to a finite boolean combination of inequalities of the
form $\alpha_i \leq a$; and similarly for $(x^\beta,l) \in S'$. And if
$f:\Pi' \to \RR$ is a monoid homomorphism, then it can be extended with
$0$ to all of $\Mon_n$ and is then given by an taking an inner product
with a vector $\mu \in \RR^n$. Now the condition $g(x^\alpha,j) \leq
g(x^\beta,j)$ translates into $\langle \alpha,\mu \rangle \leq \langle
\beta,\mu \rangle + c$ for a suitable constant $c$.
\end{proof}

\begin{proof}[Proof of Theorem~\ref{thm:Algorithm} and its analogue
for $\Pi$-sets]
We have to decide whether an input formula
\[ \forall u_1,\ldots,u_n \in \Pi \ \forall s_1,\ldots,s_m \in S: 
\psi(u_1,\ldots,u_n,s_1,\ldots,s_m) \]
holds for all commutative monoids $\Pi$ and all $\Pi$-sets $S$.
Here $\psi$ is a finite boolean combination of inequalities of the form
\[ (u_1^{a_1} \cdots u_n^{a_n}) s_i < (u_1^{b_1} \cdots u_n^{b_n})
s_j. \]
(Weak inequalities can be translated to strict inequalities by
negation.)
Equivalently, replacing $\psi$ by its negation, we have to decide
whether there exists a model of $\psi$, i.e., a commutative monoid $\Pi$,
a $\Pi$-set $S$ equipped with a preorder, and elements $u_1,\ldots,u_n
\in \Pi$ and $s_1,\ldots,s_m \in S$ in which $\psi$ holds. This model can
then be replaced by the smaller model consisting of the submonoid $\Pi'$
of $\Pi$ generated by the $u_i$ and the sub $\Pi'$-set $S'$ of $S$
generated by $s_1,\ldots,s_m$, with the restriction of $\leq$ to $S'$.
Now $\Pi'$ and $S'$ are quotients of $\Mon_n$ and $\Mon_n \times [m]$,
respectively, and we can pull back the preorder. Hence in our model we
may fix $\Pi$ to be $\Mon_n$ (interpreting each $u_i$ as the generator $x_i$
of $\Mon_n$) and $S$ to be $\Mon_n \times [m]$ (interpreting each $s_j$ as the
generator $(1,j)$), and only need to search for the preorder $\leq$.

To this end, we run two algorithms in parallel: one that looks for
contradictions, and a second that looks for such a preorder.

The first algorithm enumerates, for increasing $N$ larger than all of the
exponents of the $x_i$ appearing in $\psi$, all binary relations $\leq$
on $\Mon_{n,\leq N} \times[m]$, where $\Mon_{n,\leq N}:=\{x^\alpha \mid
\forall i: \alpha_i \leq N\}$, that satisfy transitivity, totality ($v
\leq w$ or $w \leq v$), as well as ($s \leq t \Rightarrow us \leq ut)$
for all $u \in \Mon_{n,\leq N}$ and all $s,t \in \Mon_{n,\leq N} \times
[m]$ with $us,ut \in \Mon_{n,\leq N} \times [m]$. For each such relation
$\leq$, the algorithm checks whether $\psi$ is satisfied.  If the first
algorithm does not find such a relation $\leq$ for a specific value
of $N$, then it outputs {\em ``no model exists''}, signals the second
algorithm to terminate, and terminates itself. If it does find such a
relation, then it increases $N$ by $1$ and continues.

The second algorithm searches through all preorders on $\Mon_n \times[m]$,
as follows. Compute, for increasing $i$, the admissible set $C_i=A_i
\setminus H_i \subseteq \RR^{N_i}$ and the map $\phi_i$ from the $\Pi$-set
version of Theorem~\ref{thm:Main}, and compute a semi-algebraic subset
$B_i \subseteq A_i$ from $\psi$ by translating each clause $s<t$ to the
set $A'_{ist}$ from Theorem~\ref{thm:Main} and by translating boolean
operations in the usual way to set operations. Now test whether $B_i
\setminus H_i$ is nonempty. If so, the second algorithm outputs {\em
``a model exists''}, signals the first algorithm to terminate, and
terminates itself. Indeed, then any $p \in B$ defines a preorder $\leq\
:= \phi_i(p)$ on $\Mon_n \times[m]$ for which $\psi$ is satisfied. If,
however, $B_i \setminus H_i$ is empty, then the second algorithm increases
$i$ by one and continues.

The emptiness of $B_i \setminus H_i$ can be tested by computing
a cylindrical algebraic decomposition of $\RR^{N_i}$ adapted to $B_i$
(see, e.g.,~\cite[Chapter 11]{Basu06}) and testing whether (the span of)
each cell in $B_i$ is contained in some hyperplane in $H_i$.

By the $\Pi$-set version of Theorem~\ref{thm:Main}, if a model exists,
then it is found after finitely many steps by the second algorithm. On
the other hand, if no solution exists, then by a standard compactness
argument, an inconsistency is found after finitely many steps by the
first algorithm. Either way, the algorithm terminates, and with the
correct output.
\end{proof}

The proofs above, of course, also establish
Theorem~\ref{thm:SAnalogues}.
 
\bibliographystyle{alpha}
\bibliography{diffeq}

\newcommand{\etalchar}[1]{$^{#1}$}
\begin{thebibliography}{EKM{\etalchar{+}}01}

\bibitem[AE84]{AE84}
M.E. Anderson and C.C. Edwards.
\newblock A representation theorem for distributive {$l$}-monoids.
\newblock {\em Canad. Math. Bull.}, 27(2):238--240, 1984.

\bibitem[AT62]{Aull62}
Charles~E. Aull and W.~J. Thron.
\newblock Separation axioms between {{\(T_0\)}} and {{\(T_1\)}}.
\newblock {\em Nederl. Akad. Wet., Proc., Ser. A}, 65:26--37, 1962.

\bibitem[Bey77]{Bey77}
W.M. Beynon.
\newblock Applications of duality in the theory of finitely generated
  lattice-ordered abelian groups.
\newblock {\em Canad. J. Math.}, 29(2):243--254, 1977.

\bibitem[BPR06]{Basu06}
Saugata Basu, Richard Pollack, and Marie-Fran{\c{c}}oise Roy.
\newblock {\em Algorithms in real algebraic geometry}, volume~10 of {\em
  Algorithms Comput. Math.}
\newblock Berlin: Springer, 2nd ed. edition, 2006.

\bibitem[CGMS22]{CGMS22}
A.~Colacito, N.~Galatos, G.~Metcalfe, and S.~Santschi.
\newblock From distributive $\ell$-monoids to $\ell$-groups, and back again.
\newblock {\em J. Algebra}, 601:129--148, 2022.

\bibitem[CM20]{CM20}
A.~Colacito and V.~Marra.
\newblock Orders on groups, and spectral spaces of lattice-groups.
\newblock {\em Algebra Universalis}, 81:6, 2020.

\bibitem[CR16]{CR16}
A.~Clay and D.~Rolfsen.
\newblock {\em Ordered Groups and Topology}, volume 176 of {\em Graduate
  Studies in Mathematics}.
\newblock American Mathematical Society, 2016.

\bibitem[CS99]{Caboara99}
Massimo Caboara and Marco Silvestri.
\newblock Classification of compatible module orderings.
\newblock {\em J. Pure Appl. Algebra}, 142(1):13--24, 1999.

\bibitem[DNR14]{DNR14}
B.~Deroin, A.~Navas, and C.~Rivas.
\newblock Groups, orders, and dynamics.
\newblock arXiv:1408.5805 [math.GR], 2014.

\bibitem[EKM{\etalchar{+}}01]{EKMMW01}
K.~Evans, M.~Konikoff, J.J. Madden, R.~Mathis, and G.~Whipple.
\newblock Totally ordered commutative monoids.
\newblock {\em Semigroup Forum}, 62:249--278, 2001.

\bibitem[GH13]{GH13}
N.~Galatos and R.~Hor\v{c}\'{\i}k.
\newblock {Cayley's and Holland's theorems for idempotent semirings and their
  applications to residuated lattices.}
\newblock {\em Semigroup Forum}, 87(3):569--589, 2013.

\bibitem[GJKO07]{GJKO07}
N.~Galatos, P.~Jipsen, T.~Kowalski, and H.~Ono.
\newblock {\em Residuated Lattices: An Algebraic Glimpse at Substructural
  Logics}.
\newblock Elsevier, 2007.

\bibitem[GvG24]{Gehrke24}
Mai Gehrke and Sam van Gool.
\newblock {\em Topological duality for distributive lattices. {Theory} and
  applications}, volume~61 of {\em Camb. Tracts Theor. Comput. Sci.}
\newblock Cambridge: Cambridge University Press, 2024.

\bibitem[Hoc69]{Hochster69}
M.~Hochster.
\newblock Prime ideal structure in commutative rings.
\newblock {\em Trans. Am. Math. Soc.}, 142:43--60, 1969.

\bibitem[Hor98]{Horn98}
Karen~Marie Horn.
\newblock {\em Classification of term orders on a module}.
\newblock PhD thesis, University of Maryland, University Microfilms
  International, Ann Arbor, Michigan, 1998.

\bibitem[KT95]{Kari1995}
Lila Kari and Gabriel Thierrin.
\newblock Languages and compatible relations on monoids.
\newblock In {\em Mathematical Linguistics and Related Topics}, pages 212--220.
  Editura Academiei Române, 1995.

\bibitem[KTN18]{Kemper18}
Gregor Kemper, Ngo~Viet Trung, and Thi van~Anh Nguyen.
\newblock Toward a theory of monomial preorders.
\newblock {\em Math. Comput.}, 87(313):2513--2537, 2018.

\bibitem[LP07]{Lam07}
Thomas Lam and Alexander Postnikov.
\newblock Alcoved polytopes. {I}.
\newblock {\em Discrete Comput. Geom.}, 38(3):453--478, 2007.

\bibitem[MdS02]{Mor02}
C.~Moreira~dos Santos.
\newblock Decomposition of strongly separative monoids.
\newblock {\em {J. Pure Appl. Algebra}}, 172:25--47, 2002.

\bibitem[Mer71]{Mer71}
T.~Merlier.
\newblock Sur les demi-groupes reticules et les {$o$}-demi-groupes.
\newblock {\em Semigroup Forum}, 2(1):64--70, 1971.

\bibitem[MPT23]{MPT23}
G.~Metcalfe, F.~Paoli, and C.~Tsinakis.
\newblock {\em Residuated Structures in Algebra and Logic}, volume 277 of {\em
  Mathematical Surveys and Monographs}.
\newblock American Mathematical Society, 2023.

\bibitem[Pan99]{Pan99}
Giovanni Panti.
\newblock Prime ideals in free $\ell$-groups and free vector lattices.
\newblock {\em Journal of Algebra}, 219(1):173--200, 1999.

\bibitem[Rep83]{Rep83}
V.B. Repnitski\u{\i}.
\newblock Bases of identities of varieties of lattice-ordered semigroups.
\newblock {\em Algebra i Logika}, 22(6):649--665, 1983.

\bibitem[Rob85]{Robbiano85}
Lorenzo Robbiano.
\newblock Term orderings on the polynomial ring.
\newblock Computer algebra, {EUROCAL} '85, {Proc}. {Eur}. {Conf}.,
  {Linz}/{Austria} 1985, {Vol}. 2, {Lect}. {Notes} {Comput}. {Sci}. 204,
  513-517, 1985.

\bibitem[RR97]{Rust97}
C.~J. Rust and G.~J. Reid.
\newblock Rankings of partial derivatives.
\newblock In {\em Proceedings of the 1997 international symposium on symbolic
  and algebraic computation, ISSAC '97, Maui, HI, USA, July 21--23, 1997},
  pages 9--16. New York, NY: ACM Press, 1997.

\bibitem[Sik04]{Sikora06}
Adam~S. Sikora.
\newblock Topology on the spaces of orderings of groups.
\newblock {\em Bull. Lond. Math. Soc.}, 36(4):519--526, 2004.

\bibitem[Sto38]{Sto38}
M.H. Stone.
\newblock Topological representations of distributive lattices and {B}rouwerian
  logics.
\newblock {\em Cas. Mat. Fys.}, 67(1):1--25, 1938.

\bibitem[Vet16]{Vet16}
T.~Vetterlein.
\newblock On positive commutative tomonoids.
\newblock {\em Algebra Universalis}, 75:381--404, 2016.

\bibitem[Weh94]{Weh94}
F.~Wehrung.
\newblock Restricted injectivity, transfer property and decompositions of
  separative positively ordered monoids.
\newblock {\em Comm. Algebra}, 22(5):1747--1781, 1994.

\end{thebibliography}

\end{document}